\newcommand{\dominateseq}{\mathrel{\mathord{\unrhd}}}
\def\newaliasedtheorem#1[#2]#3{%
  \newaliascnt{#1@alt}{#2}
  \newtheorem{#1}[#1@alt]{#3}
  \expandafter\newcommand\csname #1@altname\endcsname{#3}
  \aliascntresetthe{#1@alt}
  \expandafter\newcommand\csname #1autorefname\endcsname{#3}
}
\theoremstyle{plain}
\newtheorem{Theorem}{Theorem}
\theoremstyle{definition}
\theoremstyle{remark}
\def\sectionautorefname~#1\null{\S#1\null}
\def\subsectionautorefname~#1\null{\S#1\null}
\def\subsubsectionautorefname~#1\null{\S#1\null}
\def\equationautorefname~#1\null{Equation~(#1)\null}
\def\itemautorefname~#1\null{#1\null}
\newcommandx{\franco}[2][1=]{\todo[linecolor=green,backgroundcolor=green!25,bordercolor=green,#1]{FS: #2}}
\newcommandx{\ton}[2][1=]{\todo[linecolor=yellow,backgroundcolor=yellow!25,bordercolor=yellow,#1]{TD: #2}}
\newcommand{\mb}{\mathbb}
\newcommand{\mf}{\mathfrak}
\newcommand{\inv}{^{-1}}
\def\binomial(#1,#2){{#1\choose #2}}
\newcommand{\mathid}{{\boldsymbol1}}
\newcommand{\symm}{\mf{S}}
\newcommand{\cf}{\textit{cf.} }
\newcommand{\ie}{\textit{i.e.}}
\newcommand\Rand{\operatorname{\mathsf{R2R}}}
\newcommand\RTop{\operatorname{\mathsf{R2T}}}
\newcommand\TopR{\operatorname{\mathsf{T2R}}}
\newcommand\Int{\operatorname{\mathsf{sh}}}
\newcommand\Ex{\operatorname{\partial}}
\newcommand\Repl{\operatorname{\Theta}}
\newcommand\proj{\operatorname{\mathsf{proj}}}
\newcommand\isoproj{\operatorname{\mathsf{isoproj}}}
\newcommand\projlift[1][a]{\operatorname{\mathsf{projlift}}_{#1}^{\lambda,\mu}}
\newcommand\projliftoper{\operatorname{\mathsf{projlift}}}
\newcommand\im{\operatorname{\mathsf{im}}}
\newcommand\ColStab{\operatorname{\mathsf{ColStab}}}
\newcommand\sign{\operatorname{\mathsf{sign}}}
\newcommand\Ind{\operatorname{Ind}}
\newcommand\Lift{\mathcal{L}}
\DeclareSymbolFont{ugrf@m}{U}{eur}{m}{n}
\DeclareMathSymbol{\updelta}{\mathord}{ugrf@m}{"0E}
\newcommand\KroneckerDelta{\operatorname{\updelta}}
\newcommand\shape{\operatorname{shape}}
\newcommand\size{\operatorname{size}}
\newcommand\diagonalindex{\operatorname{diag}}
\newcommand\evaluation{\operatorname{eval}}
\newcommand\spn{\operatorname{span}}
\newcommand\SSYT{\mathsf{SSYT}}
\newcommand\SYT{\mathsf{SYT}}
\def\mathgreen#1{\ensuremath{\boldsymbol{\color{ForestGreen}#1}}}
\def\mathred#1{\ensuremath{\boldsymbol{\color{BrickRed}#1}}}
\def\mathblue#1{\ensuremath{\boldsymbol{\color{RoyalBlue}#1}}}
\newcommand\ZZ{\mathbb{Z}}
\newcommand\NN{\mathbb{N}}
\newcommand\CC{\mathbb{C}}
\newcommand\MM[1][n]{M^{\langle #1 \rangle}}
\newcommand\mult{\operatorname{mult}}
\newcommand\word{\operatorname{word}}
\newcommand\row{\operatorname{row}}
\newcommand\w{\operatorname{\mathsf{w}}}
\let\Langle\langle
\let\Rangle\rangle
\def\langle{\left\Langle}
\def\rangle{\right\Rangle}
\newcommand{\decreasingrearrangement}[1]{\overset{{}_{\leftarrow}}{#1}}
\newcommand{\FreeAssociativeAlgebra}[1][A]{\mathbb{C}\langle A \rangle}
\def\smallyoung(#1){\ensuremath{\footnotesize\young(#1)}}
\newcommand{\eig}{\operatorname{eig}}
\newcommand{\ndestab}{d}
\def\IWpartial{\partial}
\def\IWdelta{\delta}
\def\IWModule{\mathcal M}
\def\IWLaplacian{\Lambda}
\def\IWComplex{\mathcal K}
\def\IWPoset{P}
\def\unprotectedboldentry#1{\textcolor{Red}{\large{#1}}}
\def\boldentry{\protect\unprotectedboldentry}
\newcommand{\tikztableauinternal}[1]{
    \def\newtableau{#1}
    \coordinate (x) at (-0.5,0.5);
    \coordinate (y) at (-0.5,0.5);
    \foreach \row in \newtableau {
        \coordinate (x) at ($(x)-(0,1)$);
        \coordinate (y) at (x);
        \foreach \entry in \row {
            \ifthenelse{\equal{\entry}{X}}
               {
                \node (y) at ($(y) + (1,0)$) {};
                \fill[color=gray!10] ($(y)-(0.5,0.5)$) rectangle +(1,1);
                \draw[color=gray, dotted] ($(y)-(0.5,0.5)$) rectangle +(1,1);
               }
               {
                \ifthenelse{\equal{\entry}{\boldentry X}}
                   {
                    \node (y) at ($(y) + (1,0)$) {};
                    \fill[color=gray] ($(y)-(0.5,0.5)$) rectangle +(1,1);
                    \draw ($(y)-(0.5,0.5)$) rectangle +(1,1);
                   }
                   {
                    \node (y) at ($(y) + (1,0)$) {\entry};
                    \draw ($(y)-(0.5,0.5)$) rectangle +(1,1);
                   }
               }
            }
        }
}
\newcommand{\tikztableau}[2][scale=0.6,every node/.style={font=\small}]{
    \begin{array}{c}
    \begin{tikzpicture}[#1]
        \tikztableauinternal{#2}
    \end{tikzpicture}
    \end{array}}
\newcommand{\tikztableausmall}[1]{\tikztableau[scale=0.45,every node/.style={font=\rm\small}]{#1}}
\newcommand{\tikztableauscript}[1]{\tikztableau[scale=0.30,every node/.style={font=\scriptsize}]{#1}}
\def\eigenvaluestableheader{
        $\lambda/\mu$ &
        $\ndestab^\mu$ &
        $f^\lambda$ &
        multiplicity &
        $\binom{|\lambda|+1}{2}$ &
        $\binom{|\mu|+1}{2}$ &
        $\diagonalindex(\lambda/\mu)$ &
        $\eig(\lambda/\mu)$
}
\begin{document}

\title[Spectral analysis of random-to-random Markov chains]%
    {Spectral analysis of \\random-to-random Markov chains}

\author{A. B. Dieker}
   \address{Industrial Engineering and Operations Research, Columbia University, 500 W 120th St, New York, NY 10027}
   \email{dieker@columbia.edu}

\author{F. V. Saliola}
   \address{Laboratoire de Combinatoire et d'Informatique Math\'ematique (LaCIM) \\
           Universit\'e du Qu\'ebec \`a Montr\'eal \\
            CP 8888, Succ. Centre-ville \\
           Montr\'eal (Qu\'ebec) H3C 3P8 \\
          Canada}
   \email{saliola.franco@uqam.ca}

\thanks{The work of the second author was supported by an NSERC Discovery
    Grant.}

\date{Compiled on {\today} at {\currenttime}}

\begin{abstract}
We compute the eigenvalues and eigenspaces of random-to-random Markov chains.
We use a family of maps which reveal a remarkable recursive structure of the eigenspaces,
yielding an explicit and effective construction of all eigenbases starting from bases of the kernels.
\end{abstract}

\maketitle

\setcounter{tocdepth}{2}
\tableofcontents

\newpage

\section{Introduction}

Random-to-random Markov chains, also known as \emph{random insertion}
\cite{SaloffCosteZuniga2008} and \emph{random-to-random insertion}
\cite{Subag2013}, describe the random evolution of $n$ (ordered) objects,
some of which may be identical,
if someone repeatedly removes an object at random
and puts it back at a random position.
One can think of the objects as being books on a shelf, entries in a database,
characters in a word, or cards in a deck of cards.
As such, the random-to-random Markov chain constitutes
a canonical card shuffling model, and can be thought of as sequentially applying
a random-to-top shuffle (choose a card at random and move it to the top)
and a top-to-random shuffle (move the top card to a random position)
at each step.

There are several fundamental questions concerning random-to-random shuffles
that have withstood analysis. This is quite striking considering that these
same questions have been answered for the related random-to-top and
top-to-random shuffles, which play an instrumental role in the theory of card
shuffling \cite{DiaconisBook} and the theory of random walks on groups and
semigroups \cite{DiaconisSaloffCoste1995, BHR1999, Brown2000, SaloffCoste2004,
MargolisSaliolaSteinbergJEMS, MargolisSaliolaSteinbergAMS}.
This paper provides answers to some of these questions and new tools to
investigate others.

\medskip
\emph{Our results.}
This paper studies random-to-random Markov chains through a spectral lens.
We introduce a family of maps that allow us to
compute the eigenvalues and eigenspaces of random-to-random Markov chains.
Our maps reveal a remarkable recursive structure of the eigenspaces, 
and yield an explicit and effective construction of all eigenbases starting from bases of the kernels.

\autoref{fig:overview} gives a high-level description of how our maps exploit the recursive
nature of random-to-random Markov chains to construct its eigenbases.
This paper introduces the maps $\mathcal L_i$ depicted in the figure,
explains which of them generate eigenspaces of the larger Markov chain, and describes
when two different eigenspaces of the smaller chain generate the same eigenspace of the larger chain.
Furthermore, this paper shows how to selectively apply our maps to generate all non-kernel eigenspaces 
{\em exactly} once from random-to-random kernels.

\begin{figure}
    \captionsetup{width=0.9\textwidth}
    \centering
    \includegraphics[width=0.85\textwidth]{r2r_fig1.ai}
\caption{An abstraction of the recursive structure of the eigenspaces for $n=3$ and $n=4$, 
highlighting the role of the maps $\mathcal L_i$ introduced in this paper.
Each cell represents a one-dimensional eigenspace of the random-to-random Markov chain.
Eigenspaces that share dashed borders are rotations of each other when exploiting well-understood permutation symmetries.
The number in a cell is the eigenvalue corresponding to its eigenspace.
The kernel is the only eigenspace that is not generated by these maps.}
\label{fig:overview}
\end{figure}

We also give two combinatorial descriptions of the eigenvalues of random-to-random Markov chains.
The first model indexes the eigenvalues by pairs of integer partitions called
\emph{horizontal strips}. The eigenvalues and their multiplicities are obtained
using a combinatorial statistic defined on the horizontal strips.
The second model directly outputs an eigenvalue for each arrangement of the
deck of cards. All eigenvalues (counting multiplicities) are obtained in this way by
considering all possible arrangements of the deck of cards, i.e., the states
of the random-to-random Markov chain.

\medskip
\emph{Implications of our results.}
Our results have several immediate implications, and may lead to further applications.

\begin{enumerate}[wide, label=\emph{\arabic*)}, itemsep=0.5em]
\item
We establish that all eigenvalues of random-to-random Markov chains are
integers, up to an explicit multiplicative scaling factor.
This has been previously conjectured in the Stanford University PhD thesis of
Jay-Calvin Uyemura-Reyes \cite[Section~5.2]{Reyes2002}.
Using Fourier analysis on symmetric groups, Uyemura-Reyes determines
eigenvalues and eigenvectors for various representations of the symmetric
groups (including the sign representation, the permutation representation, and
the standard representation
\cite[Proposition~5.3,~Theorems~5.4--5.5]{Reyes2002}), and makes several
explicit conjectures on the eigenvalues for other representations
\cite[Conjectures~5.6--5.11]{Reyes2002}. This leads him to conjecture that all
the eigenvalues are integers \cite[Conjecture~5.2]{Reyes2002}. Our results
settle all these conjectures.

\item
Our results have implications for the Laplacians associated with the complex of injective words.
Indeed, Hanlon and Hersh \cite{HanlonHersh2004} uncovered a close connection
between random-to-random Markov chains and these Laplacians.
They show that the aforementioned integrality property of the
eigenvalues of random-to-random Markov chains establishes that the
spectra of the Laplacians associated with the complex of injective words are also integral.
See \autoref{cor:laplacians}.

\item
Our results give a new perspective on the spectral analysis of symmetrized versions of
Bidigare--Hanlon--Rockmore random walks on the faces of a hyperplane
arrangement, which leads to intriguing questions.
This seminal theory was initiated by Bidigare, Hanlon and Rockmore (BHR)
\cite{BHR1999} and further developed by Brown and Diaconis
\cite{BrownDiaconis1998,Brown2000,Brown2004}. They showed
that several popular Markov chains are random walks on the faces of
a hyperplane arrangement. This includes the random-to-top shuffle,
which shows that the random-to-random shuffle can be obtained by
``symmetrizing'' a BHR random walk: sequentially apply the random-to-top
shuffle and its inverse shuffle (\ie, the top-to-random shuffle).

As first observed by Uyemura-Reyes \cite[Section~5.2.3]{Reyes2002}, and further
reinforced by the work of Reiner--Saliola--Welker \cite{RSW2014}, the
eigenvalues of other families of symmetrized BHR shuffles also seem to exhibit
nice properties. The techniques of the present paper can be used to analyse
the spectra of these shuffles.
For instance, Reiner--Saliola--Welker studied a family of symmetrized BHR
operators that commute with the random-to-random operator. Our results can be
used to give a second proof that these operators commute and construct their
eigenspaces (see \autoref{rem:commutativity-of-RSW-operators}). Possibly our
approach can be generalized for other symmetrized versions of BHR walks with
a symmetric distribution on the faces.

\item

Exploiting our results, Bernstein and Nestoridi~\cite{BernsteinNestoridi2017}
have settled an open problem on the so-called
\emph{mixing time} of random-to-random Markov chains, i.e., how many shuffles
are needed to approach the stationary distribution. Most of the existing
literature on random-to-random Markov chains focuses on this problem. The first
result of this kind is by Diaconis and Saloff-Coste
\cite{DiaconisSaloffCoste1995}, who give a bound of the correct asymptotic
order ($n\log(n)$ for a deck of $n$ cards) as the number of cards grows to
infinity. Subsequently, Uyemura-Reyes \cite[Section~5.1]{Reyes2002} established an upper
bound for the mixing time, which was later sharpened in work of Saloff-Coste
and Z{\'u}{\~n}iga \cite{SaloffCosteZuniga2008} and of Morris and
Qin \cite{MorrisQin2014}. Subag \cite{Subag2013} recently established a lower
bound. This bound was shown to be tight by Bernstein and
Nestoridi~\cite{BernsteinNestoridi2017} using a proof technique that
establishes mixing times from explicit combinatorial descriptions of
eigenvalues; see Diaconis and Shahshahani~\cite{DiaconisShahshahani1981}.

\item
Our results may shed new light on various generalizations of the random-to-random shuffle
that have been of recent interest.
Ayyer, Schilling and Thiéry \cite{AyyerSchillingThiery} introduced
Markov chains on the set of linear extensions of a finite poset; when the poset
is an antichain, they recover the random-to-random shuffle on permutations.
They conjecture that the second largest eigenvalue is bounded above by
$(1+1/n)(1-2/n)$ for posets with $n$ elements, with equality when the poset is
disconnected. For the case of the random-to-random shuffle,
it follows immediately from our results that this bound is achieved.
Representation theory also plays a prominental role in their analysis.

There has also been quite a bit of interest in systematic scan versions of the
random-to-random shuffle where instead of randomly selecting a card to remove,
the cards are removed systematically.
One such shuffle is the \emph{card-cyclic-to-random} shuffle:
remove the card labelled $1$ and randomly reinsert it back into the deck; then
remove the card labelled $2$ and randomly reinsert it; then
remove the card labelled $3$ and randomly reinsert it; and so on,
cycling through all the cards in the deck.
This shuffle was introduced by Pinsky \cite{Pinsky2015} and further studied by
Morris--Ning--Peres \cite{MorrisNingPeres2014}, who showed that
the mixing time is on the order of $n\log(n)$ for a deck of $n$ cards.
\end{enumerate}

\medskip
\emph{Outline of the article.}
Our results rest on two principal theorems.
The proofs of these two theorems are quite lengthy and make substantial use of
the representation theory of the symmetric group, so we delay them to
\autoref{sec:main-proofs}.
This allows us to present a detailed exposition in
\autoref{the-shuffling-processes} through \autoref{sec:eigenspaces}
that is unimpeded by technicalities and that does not require any knowledge of
representation theory.

\autoref{the-shuffling-processes} introduces notation and interprets shuffles
as operators acting on words. We use a word to model a deck of cards since this
allows for decks with repeated cards.

The eigenvalues are described in \autoref{sec:eigenvalues}.
We present two descriptions: one in terms of a combinatorial statistic on words
(\autoref{ssec:R2R-eigenvalues-RSK}); and a reformulation in terms of
horizontal strips (\autoref{ssec:R2R-eigenvalues-horizontal-strips}).
This makes use of the celebrated Robinson--Schensted--Knuth correspondence
between words and pairs of tableaux (defined in
\autoref{ssec:R2R-eigenvalues-RSK})
as well as the providential notion of desarrangement tableaux
(\autoref{sssec:desarrangementtableaux}).

The eigenspaces are described in \autoref{sec:eigenspaces}.
The central result is an inductive procedure to construct eigenvectors for the
random-to-random shuffle acting on words of length $n+1$ from eigenvectors for
the random-to-random shuffle acting on words of length $n$.
We present two concrete examples in \autoref{ssec:motivating-examples} to
illustrate the process; these are special cases of results in
\autoref{ssec:general-case}.
It turns out that all eigenvectors can be obtained in this way, except those
that lie in the kernel.
\autoref{constructing-eigenspaces-and-eigenbases} describes how this
leads an explicit recursive construction of eigenbases for random-to-random
shuffles starting from bases of their kernels.
In \autoref{ssec:frobenius-characteristics}, we briefly restrict our attention
to decks with no repeated cards (\ie, permutations). We compute the Frobenius
characteristic of the eigenspaces of the random-to-random operator viewed as
(left) modules of the symmetric groups.

\autoref{sec:main-proofs} is dedicated to proving the two principal theorems,
\autoref{thm:lifting-eigenvectors} and \autoref{thm:eigenspace-decomposition}.
A detailed outline of the section appears in \autoref{outline-of-the-proofs},
which also serves as an outline of the proofs. We refer the reader to
\autoref{outline-of-the-proofs} for more details.

\subsubsection*{Acknowledgements}

We wish to thank the following people for numerous enlightening conversations
concerning various aspects of this research project:
Nantel Bergeron,
Persi Diaconis,
Darij Grinberg,
Mathieu Guay-Paquet,
Nadia Lafrenière,
Vic Reiner,
Stéphanie Schanck,
Anne Schilling,
Nicolas M. Thiéry,
Volkmar Welker,
Mike Zabrocki.
We also thank Darij, Nadia, Stéphanie
and the anonymous referee
for their careful reading of the text and their numerous suggestions that
improved the exposition.

We thank Patricia Hersh for her encouragement and advice, and for pointing
out the application to the integrality of the Laplacians of the complex of
injective words.

We thank Alain Lascoux for explaining how to efficiently construct irreducible
matrix representations of the symmetric group. The resulting computations were
very helpful throughout the course of this project.

This research was facilitated by computer exploration using the open-source
mathematical software \texttt{Sage}~\cite{Sage} and its algebraic combinatorics
features developed by the \texttt{Sage-Combinat} community
\cite{Sage-Combinat}.

\section{The shuffling processes}
\label{the-shuffling-processes}

In this section, we introduce the card shuffling processes we will study,
describe their action on words,
and define their transition matrices.

\subsection{The shuffles}

The random-to-top shuffle, also known as the \emph{Tsetlin library} or the
\emph{move-to-front/end rule}, is the following method of shuffling a deck of
cards.
\begin{quote}
    \emph{Random-to-Top shuffle}:
        choose a card at random from a deck of cards and move it to the top
        of the deck.
\end{quote}
The top-to-random shuffle is the inverse process.
\begin{quote}
    \emph{Top-to-Random shuffle}:
        remove the top card from a deck of cards and insert it into the
        deck at a random position.
\end{quote}
The \emph{random-to-random shuffle} (which is sometimes called
\emph{random insertion} in the literature) is the ``symmetrization'' of the
random-to-top shuffle; that is, it is the composition of the random-to-top
shuffle with the top-to-random shuffle.
\begin{quote}
    \emph{Random-to-Random shuffle}: remove a card at random from
    a deck of cards and insert it into the deck at a random position.
\end{quote}

\subsection{Decks of cards as words}

These shuffling processes define random walks on all the possible ways of
arranging the cards in a deck.
If the deck of $n$ cards contains distinct cards, then we model these
arrangements as permutations of the set $\{1, 2, \dots, n\}$.
Otherwise, we model the deck of cards as a word (defined below) of length $n$.
The former is a special case of the latter, but it is often
instructive and advantageous to consider the two cases separately.

\smallskip

Let $A = \{a_1, a_2, a_3, \dots\}$ be an ordered finite set.
We say that $A$ is an \emph{alphabet} and that its elements are \emph{letters}.
The elements of $A$ play the role of the cards in the deck.
We implicitly assume throughout that $A$ has enough elements;
\ie, $A$ has at least as many letters as the deck has distinct cards.

A \emph{word} on $A$ is a finite sequence of elements of $A$.
A word then corresponds to an arrangement of the deck of cards.
The \emph{length} of a word $w$ is the number of letters that it contains
and is denoted by $\ell(w)$.
A word $w$ is said to have
\emph{evaluation}
$\nu = (\nu_1, \nu_2, \dots, \nu_r)$
if $w$ contains $\nu_1$ occurrences of the first letter of the alphabet,
$\nu_2$ occurrences of the second letter, and so on.
(Some authors call this the ``content'' of the word.)

We view permutations as words and denote them using
\emph{one-line notation} or \emph{word notation}:
if $\sigma$ is a permutation, then we write $\sigma = \sigma(1) \sigma(2)
\cdots \sigma(n)$. For instance, $\sigma = 213$ is the permutation that
interchanges $1$ and $2$, and fixes $3$.
Hence, permutations of size $n$ coincide with the words of evaluation
$(1, 1, \dots, 1)$ (with $n$ copies of $1$)
on the alphabet $\{1, 2, \dots, n\}$.

\subsection{Transition matrices}

The long-term behaviour of these random walks is governed by the properties
(such as eigenvalues and eigenvectors) of a certain matrix associated to the
walk called its transition matrix.

\begin{Definition}
    The \emph{transition matrix} of a shuffling process acting on words of
    evaluation $\nu$ is the matrix whose rows and columns are indexed by
    these words and whose $(w, u)$-entry is the probability of
    obtaining $u$ from $w$ with exactly one shuffle.

    The \emph{eigenvalues and eigenvectors of a shuffling process} are the
    eigenvalues and eigenvectors of the associated transition matrix.
\end{Definition}

We will see that the transition matrix of the random-to-random shuffle is $T
T^t$, where $T$ is the transition matrix of the random-to-top shuffle and $T^t$
is its transpose (note that $T^t$ is the transition matrix of the top-to-random
shuffle).

\begin{Example}
    \label{ex:transition-matrices}

    Our convention is to identify the ``top'' of the deck with the
    \emph{last position} of a word.
    With this convention, the random-to-top shuffle acting on a word $w$ moves
    a letter to the end of the word.
    For instance, starting with $2112$ one could obtain
    $1122$, $2121$ (in two ways), and $2112$.
    Hence, the transition matrix of the random-to-top shuffle
    acting on words of length $4$ and evaluation $(2,2)$ is
    \begin{gather*}
        \frac{1}{4} \times
        \bordermatrix{
                                & \text{\tiny $1122$} & \text{\tiny $1212$} & \text{\tiny $2112$} & \text{\tiny $1221$} & \text{\tiny $2121$} & \text{\tiny $2211$} \cr
            \text{\tiny $1122$} & 2                   & 0                   & 0                   & 2                   & 0                   & 0 \cr
            \text{\tiny $1212$} & 1                   & 1                   & 0                   & 1                   & 1                   & 0 \cr
            \text{\tiny $2112$} & 1                   & 0                   & 1                   & 0                   & 2                   & 0 \cr
            \text{\tiny $1221$} & 0                   & 2                   & 0                   & 1                   & 0                   & 1 \cr
            \text{\tiny $2121$} & 0                   & 1                   & 1                   & 0                   & 1                   & 1 \cr
            \text{\tiny $2211$} & 0                   & 0                   & 2                   & 0                   & 0                   & 2 \cr
        }
    \end{gather*}
    and the transition matrix of the random-to-random shuffle acting words of
    length $4$ and evaluation $(2,2)$ is
    \begin{gather*}
        \frac{1}{16} \times
        \bordermatrix{
                                & \text{\tiny $1122$} & \text{\tiny $1212$} & \text{\tiny $2112$} & \text{\tiny $1221$} & \text{\tiny $2121$} & \text{\tiny $2211$} \cr
            \text{\tiny $1122$} & 8                   & 4                   & 2                   & 2                   & 0                   & 0 \cr
            \text{\tiny $1212$} & 4                   & 4                   & 3                   & 3                   & 2                   & 0 \cr
            \text{\tiny $2112$} & 2                   & 3                   & 6                   & 0                   & 3                   & 2 \cr
            \text{\tiny $1221$} & 2                   & 3                   & 0                   & 6                   & 3                   & 2 \cr
            \text{\tiny $2121$} & 0                   & 2                   & 3                   & 3                   & 4                   & 4 \cr
            \text{\tiny $2211$} & 0                   & 0                   & 2                   & 2                   & 4                   & 8 \cr
        }.
    \end{gather*}
    (The words labelling the rows and columns of the matrix are ordered from
    right-to-left/bottom-to-top using reverse lexicographic ordering.)
\end{Example}

\section{Eigenvalues}
\label{sec:eigenvalues}

This section describes the eigenvalues of the random-to-random shuffle acting
on words. The eigenvalues can be indexed by certain combinatorial objects
called horizontal strips. We introduce these in
\autoref{ssec:R2R-eigenvalues-horizontal-strips} and describe a combinatorial
statistic that associates with each horizontal strip an eigenvalue of the
random-to-random shuffle.

In \autoref{some-applications}, we present two applications of these results.
We compute the second largest eigenvalue in
\autoref{sssec:second-largest-eigenvalue}.
In \autoref{sssec:laplacians} we establish the integrality of the spectra of
the Laplacians on the complex of injective words.

Since the random-to-random operator is acting on a vector space of dimension
the number of words, one can also index the eigenvalues using words.
\autoref{ssec:R2R-eigenvalues-RSK} reformulates the eigenvalue result to
associate with each word $w$ an eigenvalue of the random-to-random shuffle.

\subsection{Eigenvalues and horizontal strips}
\label{ssec:R2R-eigenvalues-horizontal-strips}

Our first description of the eigenvalues makes use of the
notion of the diagonal index of the cells of a (skew) partition.
We begin by reviewing the necessary definitions.

\subsubsection{Partitions and diagonal index}

An \emph{(integer) partition} is a sequence of positive integers
$\lambda = (\lambda_1, \lambda_2, \dots, \lambda_l)$ such that
$\lambda_1 \geq \lambda_2 \geq \cdots \geq \lambda_l > 0$.
Let $|\lambda| = \lambda_1 + \lambda_2 + \cdots + \lambda_l$.
If $|\lambda| = n$, then we say that $\lambda$ is a \emph{partition of $n$} and
denote this by $\lambda \vdash n$.

The \emph{diagram} of $\lambda$ is an array of left-justified boxes
with $\lambda_i$ boxes in row $i$.
(See \autoref{subfig:partition} for an example.)
The boxes are called the \emph{cells} of $\lambda$.

We use matrix coordinates to index the cells of $\lambda$: so $(i, j)$ refers
to the cell in row $i$ and column $j$ of the diagram of $\lambda$.
The \emph{diagonal index} of a cell $(i, j)$ is $j - i$;
see \autoref{subfig:diagonalindex} for examples.
(Some authors call this the ``content'' of the cell.)

\begin{figure}[t!]
    \begin{subfigure}[t]{0.30\linewidth}
        \begin{displaymath}
            \begin{tikzpicture}[scale=0.45,every node/.style={font=\rm\small}]
                \tikztableauinternal{{\null,\null,\null,\null},{\null,\null},{\null,\null},{\null}}
            \end{tikzpicture}
        \end{displaymath}
        \subcaption{}
        \label{subfig:partition}
    \end{subfigure}
    \begin{subfigure}[t]{0.30\linewidth}
        \begin{displaymath}
            \begin{tikzpicture}[scale=0.45,every node/.style={font=\rm\small}]
                \tikztableauinternal{{\null,\null,\null,\null},{\null,\null},{\null,\null},{\null}}%
                    \draw[<-] (1.5, -1.5) -- (4.25, -1.5);%
                    \node at (4.25, -1.5) [anchor=west] {$0$};%
                    \draw[<-] (0.5, -2.5) -- (3.5, -3.75);%
                    \node at (3.5, -3.75) [anchor=west] {$-2$};%
            \end{tikzpicture}
        \end{displaymath}
        \subcaption{}
        \label{subfig:diagonalindex}
    \end{subfigure}
    \begin{subfigure}[t]{0.30\linewidth}
        \begin{displaymath}
            \begin{tikzpicture}[scale=0.45,every node/.style={font=\rm\small}]
                \tikztableauinternal{{X,X,X,\null},{X,X},{X,\null},{\null}}
            \end{tikzpicture}
        \end{displaymath}
        \subcaption{}
        \label{subfig:skewpartition}
    \end{subfigure}
    \caption{
        \ref{sub@subfig:partition}
            Diagram of the partition $(4,2,2,1)$.
        \ref{sub@subfig:diagonalindex}
            Diagonal index of the cells $(2,2)$ and $(3,1)$.
        \ref{sub@subfig:skewpartition}
            The horizontal strip $(4,2,2,1)/(3,2,1)$ consists of the 3 depicted
            white cells.
    }
    \label{fig:partitions}
\end{figure}

\subsubsection{Skew partitions and horizontal strips}
If $\lambda$ and $\mu$ are two partitions such that the diagram of $\lambda$
contains the diagram of $\mu$, then the \emph{skew partition} $\lambda/\mu$
consists of the cells of $\lambda$ that do not belong to $\mu$
(see \autoref{subfig:skewpartition}).
A \emph{horizontal strip} is a skew partition containing at most one cell in
each column.

We denote by $\diagonalindex(\lambda/\mu)$ the sum of the diagonal indices of
all the cells of $\lambda/\mu$:
\begin{equation*}
    \diagonalindex(\lambda/\mu) = \sum_{\text{cells $(i,j)$ of $\lambda/\mu$}} \big(j - i\big).
\end{equation*}
As an important special case occurs when $\mu$ is the empty partition.
In this case, the skew partition $\lambda/\emptyset$ is equal to $\lambda$,
and we denote $\diagonalindex(\lambda/\emptyset)$ by
$\diagonalindex(\lambda)$.

\subsubsection{Tableaux, ascents and desarrangement tableaux}
\label{sssec:desarrangementtableaux}

We refer the reader to \autoref{ex:desarrangementtableaux}
for examples of the notions introduced below.

Let $\lambda$ be a partition of $n$.
A \emph{tableau} of shape $\lambda$ is a filling of the cells of the diagram of
$\lambda$ with positive integers.
A tableau with $n$ cells is said to be \emph{standard} if its entries are
$\{1, 2, \dots, n\}$ and they are arranged in such as way that they strictly
increase along the rows (left to right) and down the columns (top to bottom).

An entry $i$ in a standard tableau $t$ of size $n$ is an \emph{ascent} of $t$
if either $i = n$ or if $i < n$ and $i+1$ appears weakly to the north and east
of $i$.
A standard tableau $t$ is said to be a \emph{desarrangement tableau}
if its smallest ascent is even.
Equivalently, $t$ is a desarrangement tableau
if there exists $r \in \NN$ such that
$1, 2, \dots, 2r$ are in the first column of $t$
and $2r+1$ is not in the first column of $t$.
Equivalently, $t$ is a desarrangement tableau if
the $(1,2)$-entry of $t$ is odd, or
there is no $(1,2)$-entry and $n$ is even.

\begin{Remark}
    Under the RSK correspondence between words and pairs of tableaux (recalled
    in \autoref{ssec:R2R-eigenvalues-RSK}), the ascents of a word coincide with
    the ascents of its recording tableau. Thus, desarrangement tableaux are
    precisely the recording tableaux of words whose first ascent is even.
    Such a word is called a \emph{desarrangement word}. They were first studied
    by Désarménien in \cite{Desarmenien1982}, who showed that desarrangement
    permutations are in bijection with derangements. Their properties were
    further developed by Désarménien and Wachs in \cite{DesarmenienWachs1988,
    DesarmenienWachs1993}. Desarrangement tableaux also appear in recent work
    of Reiner, the second author, and Welker \cite{RSW2014},
    and Hersh and Reiner \cite{HershReiner2015}.
\end{Remark}

\begin{Example}[Desarrangement tableaux]
    \label{ex:desarrangementtableaux}
    The unique tableau of size $0$ is a desarrangement tableau.
    There is one standard tableau of size $1$; it is not a desarrangement
    tableau.
    There are two standard tableaux of size $2$
    and four of size $3$.
    \begin{equation*}
        \tikztableausmall{{1,2}}
        \qquad
        \tikztableausmall{{1},{2}}
        \qquad
        \tikztableausmall{{1,2,3}}
        \qquad
        \tikztableausmall{{1,3},{2}}
        \qquad
        \tikztableausmall{{1,2},{3}}
        \qquad
        \tikztableausmall{{1},{2},{3}}
    \end{equation*}
    Only the second and fourth tableaux above are desarrangement tableaux.

    There are exactly $4$ desarrangement tableaux of size $4$:
    \begin{displaymath}
        \tikztableausmall{{1, 3, 4}, {2}}
        \qquad
        \tikztableausmall{{1, 3}, {2, 4}}
        \qquad
        \tikztableausmall{{1, 3}, {2}, {4}}
        \qquad
        \tikztableausmall{{1}, {2}, {3}, {4}}.
        \qedhere
    \end{displaymath}
\end{Example}

\subsubsection{Eigenvalues and horizontal strips}

The eigenvalues of the random-to-random shuffle acting on words of evaluation
$\nu \vdash n$ are indexed by horizontal strips $\lambda/\mu$ with $\lambda
\dominateseq \nu$. Here, $\dominateseq$ denotes the \emph{dominance order} on
partitions of $n$, defined by
\begin{center}
    $\lambda \dominateseq \nu$
    if and only if
    $\lambda_1 + \cdots + \lambda_i \geq \nu_1 + \cdots + \nu_i$ for all $i \geq 1$;
\end{center}
where it is understood that if a partition $\mu$ does not have a $j$-th part,
then $\mu_j = 0$.

\begin{Theorem}
    \label{thm:eigenvalues-of-R2R}
    Every eigenvalue of the random-to-random shuffle acting on words of
    evaluation $\nu \vdash n$ is of the form $\frac{1}{n^2} \eig(\lambda/\mu)$,
    where
    \begin{itemize}
        \item
            $\lambda$ is a partition of $n$ and
            $\lambda/\mu$ is a horizontal strip with $\lambda
            \dominateseq \nu$ such that $\mu \neq (n)$ and
            $\mu \neq (1, 1, \dots, 1)$ with an odd number of $1$s;

        \item
            and $\eig$ is the following combinatorial statistic defined on skew
            partitions:
            \begin{equation}
                \label{eq:eigenvalue-statistic-skew-partitions}
                \begin{aligned}
                    \eig\left(\lambda/\mu\right)
                    & =
                        \left[\binom{|\lambda|+1}{2} + \diagonalindex(\lambda)\right]
                        - \left[\binom{|\mu|+1}{2} + \diagonalindex(\mu)\right]
                        \\
                    & =
                    \binom{|\lambda|+1}{2} - \binom{|\mu|+1}{2}
                        + \diagonalindex\left(\lambda/\mu\right).
                \end{aligned}
            \end{equation}
    \end{itemize}
    The multiplicity of an eigenvalue $\frac{1}{n^2} \varepsilon$ is
    \begin{equation*}
        \sum_{
            \substack{
                \text{$\lambda/\mu$ is a horizontal strip,} \\
                \lambda \dominateseq \nu \text{~and~} \eig(\lambda/\mu) = \varepsilon}
        }
            K_{\lambda, \nu} \, \ndestab^\mu,
    \end{equation*}
    where $K_{\lambda, \nu}$ the number of semistandard tableaux of shape
    $\lambda$ and evaluation $\nu$,
    and
    $\ndestab^\mu$ is the number of desarrangement tableaux of shape $\mu$
    (\cf \autoref{sssec:desarrangementtableaux}).
\end{Theorem}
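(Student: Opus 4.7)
The plan is to combine the representation theory of the symmetric group with the recursive lifting structure established in \autoref{sec:main-proofs}, which is where the two principal theorems \autoref{thm:lifting-eigenvectors} and \autoref{thm:eigenspace-decomposition} are proved. First I would reduce to the case of permutations. The space of words of evaluation $\nu \vdash n$ is the Young permutation module $M^\nu$ for $\mathfrak{S}_n$, and the R2R transition matrix $TT^t$ commutes with the $\mathfrak{S}_n$ action because $T$ and $T^t$ are invariant under relabelling of letters. By Young's rule, $M^\nu \cong \bigoplus_{\lambda \dominateseq \nu} K_{\lambda,\nu}\, S^\lambda$, so computing eigenvalues on $M^\nu$ reduces to computing eigenvalues on each Specht factor $S^\lambda$ sitting inside the right regular representation $\mathbb{C}\mathfrak{S}_n = M^{(1^n)}$. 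This accounts for both the dominance constraint on $\lambda$ and the factor $K_{\lambda,\nu}$ in the multiplicity.

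Within a fixed Specht factor $S^\lambda$ I would apply \autoref{thm:lifting-eigenvectors}: the maps $\mathcal{L}_i$ lift R2R-eigenvectors in $S^\mu$ (for R2R on words of length $|\mu|$) to R2R-eigenvectors in $S^{\mu+e_i}$ (for R2R on words of length $|\mu|+1$), with an explicit additive shift in the unnormalized eigenvalue $\eig$. A short calculation shows that if the new cell sits at $(i,j)$ the shift is $(|\mu|+1)+(j-i)$, which is precisely $\eig(\lambda/\mu)$ when $\lambda/\mu$ is a single cell. Composing such lifts along a chain $\mu = \mu^{(0)} \subset \mu^{(1)} \subset \cdots \subset \mu^{(k)} = \lambda$ of one-box extensions then produces a cumulative shift
\[
    \sum_{s=0}^{k-1}\bigl(|\mu^{(s)}|+1 + (j_s-i_s)\bigr)
        = \binom{|\lambda|+1}{2} - \binom{|\mu|+1}{2} + \diagonalindex(\lambda/\mu)
        = \eig(\lambda/\mu).
\]
\autoref{thm:eigenspace-decomposition} then contributes the combinatorics that packages this recursion: it asserts that every non-kernel eigenvector of R2R$_n$ on $S^\lambda$ lies in the image of some chain of lifts, and that two such chains produce the same eigenspace exactly when they differ by permuting the order of additions into \emph{distinct} columns. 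Consequently the indexing data collapses to precisely a horizontal strip $\lambda/\mu$, and no double-counting occurs.

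The base of the recursion is the kernel of R2R$_{|\mu|}$ on $S^\mu$, which \autoref{thm:eigenspace-decomposition} identifies as a space of dimension $\ndestab^\mu$, the number of desarrangement tableaux of shape $\mu$. Since $\ndestab^\mu$ vanishes exactly when $\mu = (n)$ or when $\mu = (1^n)$ with $n$ odd, these shapes drop out of the enumeration, matching the restriction in the theorem; combining the permutation-module multiplicity $K_{\lambda,\nu}$ with the kernel multiplicity $\ndestab^\mu$ then gives the stated multiplicity formula. The hard part, and the reason these two theorems are postponed to \autoref{sec:main-proofs}, is the analysis of the maps $\mathcal{L}_i$ themselves: establishing the intertwining identity with the explicit eigenvalue shift, proving that the lifts jointly surject onto the non-kernel spectrum, and isolating the horizontal-strip condition as the precise redundancy criterion. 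Each of these rests on a delicate use of the representation theory of the symmetric group, together with the Hanlon--Hersh connection to the complex of injective words for the desarrangement-tableau count in the kernel.
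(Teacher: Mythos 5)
Your argument follows the paper's own proof outline exactly: decompose $M^\nu$ via Young's rule into $\bigoplus_\lambda K_{\lambda,\nu}\, S^\lambda$, then invoke \autoref{thm:eigenspace-decomposition} (fed by \autoref{thm:lifting-eigenvectors}) to split each $S^\lambda$ into summands of dimension $\ndestab^\mu$ on which $\Rand_n$ acts by the scalar $\eig(\lambda/\mu)$, and your telescoping computation of the cumulative eigenvalue shift is correct. One small attribution slip: the identification $\dim\ker\Rand^\mu = \ndestab^\mu$ comes from Reiner--Webb and Reiner--Saliola--Welker (as cited in the proof of \autoref{directsumdecomposition}), not from the Hanlon--Hersh connection, which only enters as a downstream application to Laplacians.
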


Examples illustrating \autoref{thm:eigenvalues-of-R2R} appear in
\autoref{fig:eigenvalues-of-R2R3} ($n = 3$)
and \autoref{appendix:figures} ($n \leq 5$).

\begin{proof}[Proof (Outline)]
    The result will follow from a decomposition of the vector space into
    subspaces on which the random-to-random operator acts as scalar
    multiplication. We provide an outline of the argument here.

    Properties of the representation theory of the symmetric group
    allow us to decompose the vector space $M^\nu$ spanned by words of evaluation $\nu$
    into subspaces $S^\lambda$ that are indexed by partitions $\lambda \vdash
    n$ with $\lambda \dominateseq \nu$.
    The number of copies of the space $S^\lambda$ in $M^\nu$ is $K_{\lambda, \nu}$.
    (This is explained in detail in
    \autoref{modules-of-words-and-reduction-to-specht-submodules} and
    \autoref{Mdecomposition} of \autoref{sss:Mdecomposition}.)

    \autoref{thm:eigenspace-decomposition} further decomposes these subspaces into
    subspaces that are indexed by horizontal strips $\lambda/\mu$, that are of
    dimension $\ndestab^\mu$, and on which the (normalized) random-to-random
    operator acts as scalar multiplication by $\eig(\lambda/\mu)$.
\end{proof}

\begin{figure}[hb]
    \begin{tabular}{c|ccc|cccc}
        \toprule
            $\lambda/\mu$ &
            $\ndestab^\mu$ &
            $K_{\lambda, 111}$ &
            multiplicity &
            $\binom{|\lambda|+1}{2}$ &
            $\binom{|\mu|+1}{2}$ &
            $\diagonalindex(\lambda/\mu)$ &
            $\eig(\lambda/\mu)$
        \\ \midrule
        $\tikztableauscript{{\null,\null,\null},}$ &   1 &   1 &   1 &   6 &   0 &   3 &   9 \\ \midrule
        $      \tikztableauscript{{X,\null},{X},}$ &   1 &   2 &   2 &   6 &   3 &   1 &   4 \\
        $          \tikztableauscript{{X,X},{X},}$ &   1 &   2 &   2 &   6 &   6 &   0 &   0 \\ \midrule
        $    \tikztableauscript{{X},{X},{\null},}$ &   1 &   1 &   1 &   6 &   3 &  -2 &   1 \\ \bottomrule
    \end{tabular}
    \caption{Eigenvalues (scaled by $n^2$) for random-to-random shuffles
        acting on permutations of size $n = 3$.
        (See \autoref{appendix:figures} for $2 \leq n \leq 6$.)}
    \label{fig:eigenvalues-of-R2R3}
\end{figure}

\subsection{Applications}
\label{some-applications}

Before continuing, we present two immediate implications of our results.
This section is not necessary for what follows.

\subsubsection{Second largest eigenvalue}
\label{sssec:second-largest-eigenvalue}

\autoref{thm:eigenvalues-of-R2R} allows us to compute the second largest
eigenvalue for the random-to-random shuffle.
It corresponds to the skew partition $\lambda/\mu$
with $\lambda = (n - 1, 1)$ and $\mu = (1, 1)$;
in this case
\begin{equation*}
    \begin{multlined}
        \eig\left(
                \tikztableausmall{{X, \null, \null, ${...}$, \null}, {X},}
            \right)
        = \binom{n+1}{2} - \binom{3}{2} + \Big(1 + \cdots + (n-2)\Big)
        \\
        = \frac{(n+1)n}{2} - 3 + \frac{(n-2)(n-1)}{2}
        = n^2 - n - 2 = (n - 2) (n + 1).
    \end{multlined}
\end{equation*}
It is of multiplicity
$K_{(n-1,1), \nu} \, \ndestab^{(1,1)} = \ell(\nu) - 1$,
since there is only one desarrangement tableau of shape $\mu = (1, 1)$ and
$\ell(\nu) - 1$ semistandard tableaux of shape $(n-1, 1)$ and evaluation $\nu$.
This establishes the following.

\begin{Corollary}[Second largest eigenvalue]
    \label{second-largest-eigenvalue}
    Let $\nu$ be a partition of $n$ that is not equal to $(n)$.
    Then the second largest eigenvalue of the random-to-random shuffle acting
    on words of evaluation $\nu$ is
    \begin{equation*}
        \frac{1}{n^2}\Big((n - 2) (n + 1)\Big) =
        \left(1 - \frac{2}{n}\right) \left(1 + \frac{1}{n}\right)
    \end{equation*}
    with multiplicity $\ell(\nu) - 1$.
\end{Corollary}
We refer to \cite{CaputoLiggettRichthammer2010,Dieker2010} for a related result
for a different shuffling process.

\subsubsection{Integrality of the spectra of the Laplacians for the complex of
    injective words}
\label{sssec:laplacians}

Hanlon and Hersh \cite{HanlonHersh2004} uncovered a close connection between
random-to-random Markov chains and the Laplacians of the complex of injective
words. Our results immediately imply that the spectra of these Laplacians are
also integral. To describe this connection, we first recall some definitions
and results from \cite{HanlonHersh2004,ReinerWebb2004}.

Fix $n$ and let $\IWPoset_n$ denote the set of words without repetition on the
alphabet $\{1, 2, \dots, n\}$. Words without repetition are called
\emph{injective words}. The set $\IWPoset_n$ is partially ordered using the
relation $w \leq u$ in $\IWPoset_n$ if and only if $w$ is a (not-necessarily-contiguous) subword of $u$.
With respect to this partial order, $\IWPoset_n$ is the poset of faces of
a $(n-1)$--dimensional regular CW-complex $\IWComplex_n$, called the
\emph{complex of injective words}.
This complex has been studied by Farmer \cite{Farmer1978} and Bjorner and Wachs
\cite{BjornerWachs1983}, whose results imply that its homology vanishes except
in the top degree, and by Reiner and Webb \cite{ReinerWebb2004}, who studied
the homology as modules for the symmetric groups.

Let $\IWModule_{r}$ denote the vector space spanned by the set of injective
words of length $r$ on the alphabet $\{1, 2, \dots, n\}$.
Let $\IWpartial_{r} : \IWModule_{r} \to \IWModule_{r-1}$ denote the
linear transformation defined on injective words of length $r$ by
\begin{gather*}
    \IWpartial_{r}(w_1w_2 \cdots w_r)
    = \sum_{j = 1}^r (-1)^j w_1 w_2 \cdots w_{j-1} w_{j+1} \cdots w_r;
\end{gather*}
and let $\IWdelta_{r} : \IWModule_{r} \to \IWModule_{r+1}$ denote the
adjoint of $\IWpartial_{r+1}$ (with respect to the inner product on
$\IWModule_{r}$ for which the injective words form an orthonormal basis).
The \emph{Laplacian} is the linear transformation
$\IWLaplacian_{r} : \IWModule_{r} \to \IWModule_{r}$
defined as
\begin{gather*}
    \IWLaplacian_{r}
    = \IWdelta_{r-1} \circ \IWpartial_{r} + \IWpartial_{r+1} \circ \IWdelta_{r}.
\end{gather*}

For $r = n$, Hanlon and Hersh proved that the Laplacian $\IWLaplacian_{n}$
coincides with the ``signed'' random-to-random operator $\Rand_n^\pm$, which is
defined for $1 \leq r \leq n$ as
\begin{align*}
    \Rand_r^\pm (w_1 \cdots w_r)
    &= r \cdot w_1 \cdots w_r \\
    &
      + \sum_{1\leq u < v \leq r} (-1)^{v - u}
      w_1 \cdots w_{u-1} \mathred{w_{u+1} \cdots w_{v}} \mathgreen{w_{u}} w_{v+1} \cdots w_r
    \\
    &
      + \sum_{1 \leq v < u \leq r} (-1)^{u - v}
      w_1 \cdots w_{v-1} \mathgreen{w_{u}} \mathred{w_{v} \cdots w_{u-1}} w_{u+1} \cdots w_r
\end{align*}
(see \cite[Theorem~3.1]{HanlonHersh2004}).
The random-to-random operator $\Rand_r$ is defined similarly,
ignoring the signs in the definition of $\Rand_r^\pm$:
\begin{align*}
    \Rand_r (w_1 \cdots w_r)
    &= r \cdot w_1 \cdots w_r \\
    &
      + \sum_{1\leq u < v \leq r}
      w_1 \cdots w_{u-1} \mathred{w_{u+1} \cdots w_{v}} \mathgreen{w_{u}} w_{v+1} \cdots w_r
    \\
    &
      + \sum_{1 \leq v < u \leq r}
      w_1 \cdots w_{v-1} \mathgreen{w_{u}} \mathred{w_{v} \cdots w_{u-1}} w_{u+1} \cdots w_r.
\end{align*}

It turns out that $\Rand_r^\pm$ and $\Rand_r$ are conjugate linear
transformations of $\IWModule_{r}$.
To see this, let $w = w_1 \cdots w_r$ denote an injective word of length $r$.
Since its letters are distinct, there is a unique permutation
$\sigma \in \symm_r$ satisfying
    $w_{\sigma_1} < w_{\sigma_2} < \cdots < w_{\sigma_r}$.
Let $\sign(w)$ denote the sign of this permutation,
and let $T: \IWModule_{r} \to \IWModule_{r}$ denote the linear transformation
obtained by extending $w \mapsto \sign(w) w$ linearly to all of
$\IWModule_{r}$.
Since for any permutation $\tau \in \symm_r$ we have
\begin{equation*}
    \sign(w_{\tau_1} w_{\tau_2} \cdots w_{\tau_r})
    =
    \sign(\tau) \sign(w_1 w_2 \cdots w_r),
\end{equation*}
it follows that $\Rand_r^\pm = T \circ \Rand_r \circ T^{-1}$
because
\begin{equation*}
    \begin{aligned}
        & \sign(w) \sign(w_1 \cdots w_{u-1} \mathred{w_{u+1} \cdots w_{v}} \mathgreen{w_{u}} w_{v+1} \cdots w_r)
        \\
        &\qquad =
        \sign((u, u+1, \ldots, v))
        =
        (-1)^{v - u}
    \end{aligned}
\end{equation*}
and similarly for the other summands.

Hanlon and Hersh prove that the integrality of the
spectrum of $\Rand_r^\pm$ implies the integrality of the spectrum of
$\IWLaplacian_r$ \cite[Theorem~3.3(2)]{HanlonHersh2004}.
Since the operators $\Rand_r^\pm$ and $\Rand_r$ are conjugate,
they have the same spectrum and so
the integrality of the spectrum of $\IWLaplacian_r$ follows from
\autoref{thm:eigenvalues-of-R2R}.

\begin{Corollary}
    \label{cor:laplacians}
    Fix $n \in \NN$ and let $\IWLaplacian_{r} : \IWModule_{r} \to
    \IWModule_{r}$, for $0 \leq r \leq n$, be the Laplacians associated with
    the complex of injective words on the alphabet $\{1, 2, \dots, n\}$.
    Then the spectrum of $\IWLaplacian_r$ is integral.
\end{Corollary}

\subsection{Eigenvalues and the RSK correspondence}
\label{ssec:R2R-eigenvalues-RSK}

Since the random-to-random operator is acting on a vector space whose dimension
is the number of words, it is natural to index the eigenvalues using these
words. Here, we reformulate \autoref{thm:eigenvalues-of-R2R} to associate with
each word an eigenvalue of the random-to-random shuffle.
This makes use of the celebrated Robinson--Schensted--Knuth correspondence
between words and pairs of tableaux.

\subsubsection{The RSK correspondence}

The Robinson–Schensted–Knuth (RSK) correspondence associates two tableaux,
denoted $P(w)$ and $Q(w)$ of the same shape, with a word $w$.
We review the correspondence here and refer the reader to
\cite{Sagan2001, StanleyEC2, Knuth1970} for more details.
The main step in this correspondence is the following.

\begin{Definition}[Row insertion]
    Let $t$ be a tableau and let $i \in \NN$.
    The \emph{row insertion} of $i$ into the $r$-th row of $t$ is the tableau
    defined recursively as follows.
    \begin{enumerate}
        \item
            If $i$ is greater than or equal to all entries of the $r$-th
            row of $t$ (including the case where the $r$-th row is empty),
            then append $i$ to the end of the row.
        \item
            Otherwise, let $j$ be the first occurrence of the smallest entry of
            the row that is greater than $i$. Replace $j$ with $i$ and insert
            the $j$ into the $(r+1)$-th row.
    \end{enumerate}
    If the row is not specified, then it is understood that $r=1$.
\end{Definition}

Iterating the row insertion procedure allows us to associate a tableau
$P(w)$ to any word $w$: start with the empty tableau (\ie, the
tableau with no rows and no columns); insert $w_1$ into row $1$; then insert
$w_2$ into row $1$; then insert $w_3$ into row $1$; and so on.
The tableau $P(w)$ is called the \emph{insertion tableau} of $w$.
Note that the entries of $P(w)$
weakly increase along the rows
and strictly increase down the columns.
Such tableaux are said to be \emph{semistandard}.

\begin{Example}
    \label{ex:insertion}
    Let $w = 234133134$.
    We insert $2$, then we insert $3$, then we insert
    $4$, and so on, obtaining the following sequence of tableaux.
    \begin{align*}
        \emptyset
        &
        \xrightarrow{\text{insert $w_1$}} \young(2)
        \xrightarrow{\text{insert $w_2$}} \young(23)
        \xrightarrow{\text{insert $w_3$}} \young(234)
        \xrightarrow{\text{insert $w_4$}} \young(134,2)
        \\[.5em]
        &
        \xrightarrow{\text{insert $w_5$}} \young(133,24)
        \xrightarrow{\text{insert $w_6$}} \young(1333,24)
        \xrightarrow{\text{insert $w_7$}} \young(1133,23,4)
        \\[.5em]
        &
        \xrightarrow{\text{insert $w_8$}} \young(11333,23,4)
        \xrightarrow{\text{insert $w_9$}} \young(113334,23,4)
        = P(w).
        \qedhere
    \end{align*}
\end{Example}

The above procedure constructs $P(w)$ one cell at a time.
By recording at which step the cells are created, we obtain
a second tableau $Q(w)$: a cell of $Q(w)$ contains $i$ if and only if
the corresponding cell of $P(w)$ is the $i$-th cell to be created.
The tableau $Q(w)$ is called the \emph{recording tableau} of $w$.
Note that the entries of $Q(w)$ are $\{1, 2, \dots, n\}$,
where $n$ is the length of $w$, and that they
strictly increase along the rows and down the columns of $Q(w)$.
Such tableaux are said to be \emph{standard}.

\begin{Example}
    \label{ex:recording}
    Continuing from \autoref{ex:insertion},
    \begin{equation*}
        \begin{aligned}
            P(w) = \young(113334,23,4)
            \qquad
            \text{and}
            \qquad
            Q(w) = \young(123689,45,7).
        \end{aligned}
        \qedhere
    \end{equation*}
\end{Example}

\subsubsection{Eigenvalues and the RSK correspondence}

Let $w = w_1 w_2 \cdots w_n$ be a word of length $n$ on an ordered alphabet.
A position $i \in \{1, 2, \dots, n\}$ is a \emph{(weak) ascent} of $w$ if
either $i = n$ or $i < n$ with $w_i \leq w_{i+1}$.
Recall that $w$ has evaluation
$\nu = (\nu_1, \nu_2, \dots, \nu_r)$
if $w$ contains $\nu_1$ occurrences of the first letter of the alphabet,
$\nu_2$ occurrences of the second letter, and so on.
If $Q$ is a tableau of shape $\lambda$, then we let
$\diagonalindex(Q) = \diagonalindex(\lambda)$,
which is the sum of the diagonal indices of the cells of $\lambda$.

The next result associates an eigenvalue with each
word $w$. Abusing notation, we denote this by $\eig(w)$.

\begin{Theorem}
    \label{thm:eigenvalues-of-R2R-RSK}
    The random-to-random shuffle acting on words of evaluation $\nu \vdash n$
    has an eigenvalue $\frac{1}{n^2} \eig(w)$
    for each word $w$ of evaluation $\nu$,
    where
    \begin{equation*}
        \label{eq:eigenvalue-statistic-permutations}
        \begin{multlined}
            \eig(w) =
            \left[ \binom{\ell(w)+1}{2} + \diagonalindex\Big( Q(w) \Big) \right]
            -
            \left[ \binom{\ell(w')+1}{2} + \diagonalindex\Big( Q(w') \Big) \right]
        \end{multlined}
    \end{equation*}
    and where $w'$ is the longest suffix of $w$ whose first ascent is even.

    The multiplicity of an eigenvalue $\varepsilon$ is the number of
    words $w$ of evaluation $\nu$ for which $\eig(w) = n^2 \, \varepsilon$.
\end{Theorem}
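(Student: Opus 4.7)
The plan is to deduce \autoref{thm:eigenvalues-of-R2R-RSK} from \autoref{thm:eigenvalues-of-R2R} by constructing a multiplicity-preserving bijection between words of evaluation $\nu$ and the data indexing the eigenvalues in \autoref{thm:eigenvalues-of-R2R}. The RSK correspondence provides the first layer: it bijects words $w$ of evaluation $\nu$ with pairs $(P(w), Q(w))$, where $P(w)$ is a SSYT of some shape $\lambda \dominateseq \nu$ with evaluation $\nu$ (accounting for the $K_{\lambda,\nu}$ factor) and $Q(w)$ is a SYT of shape $\lambda$. What remains is to build a bijection between SYTs of shape $\lambda$ and pairs $(\mu, D)$ with $\lambda/\mu$ a horizontal strip and $D$ a desarrangement tableau of shape $\mu$.

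First I would analyze the structure of $Q(w)$ via the decomposition $w = u w'$, where $w'$ is the longest suffix with even first ascent and $|u| = k$. The maximality of $w'$ forces $u$ to be weakly increasing: if $u$ had a descent, taking $j$ to be the largest descent position in $u$ and considering the suffix $w_j w_{j+1} \cdots w_n$, a short parity computation shows that its first ascent is even, producing a suffix of $w$ strictly longer than $w'$ with even first ascent and contradicting maximality. Since $u$ is weakly increasing, each of its letters is inserted into the first row under RSK, so in $Q(w)$ the labels $1, 2, \ldots, k$ occupy the first $k$ cells of row~$1$. By a standard RSK/jeu de taquin identity, $Q(w')$ is then the jeu de taquin rectification of the skew subtableau $T$ of $Q(w)$ on the labels $\{k+1, \ldots, n\}$, whose shape is $\lambda/(k)$ with $\lambda = \shape(Q(w))$. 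Setting $\mu = \shape(Q(w'))$, the paper's Remark relating ascents of a word to ascents of its recording tableau gives that $D := Q(w')$ is a desarrangement tableau of shape $\mu$.

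The main obstacle is showing that $\lambda/\mu$ is a horizontal strip. I would derive this from the Pieri rule: the expansion $s_{(k)} \cdot s_\mu = \sum_\lambda s_\lambda$ (summed over $\lambda$ with $\lambda/\mu$ a horizontal strip of size $k$) combined with the standard Littlewood--Richardson interpretation of the coefficient $c^\lambda_{(k),\mu}$ via rectification. Explicitly, $c^\lambda_{(k),\mu}$ counts standard skew tableaux on $\lambda/(k)$ whose rectification is a fixed SYT of shape $\mu$; since this coefficient is $1$ when $\lambda/\mu$ is a horizontal strip of size $k$ and $0$ otherwise, every rectification of a standard filling of $\lambda/(k)$ lands in a shape $\mu$ with $\lambda/\mu$ a horizontal strip, and the assignment $T \mapsto (\mu, D)$ with $D$ the rectification of $T$ is a bijection between standard skew tableaux on $\lambda/(k)$ and pairs $(\mu, D)$ with $\lambda/\mu$ a horizontal strip and $D$ a SYT of shape $\mu$.

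Combining the ingredients, $w \mapsto (P(w), \mu, D)$ is a bijection between words of evaluation $\nu$ and triples $(P, \mu, D)$ with $P$ an SSYT of some shape $\lambda \dominateseq \nu$ and evaluation $\nu$, $\lambda/\mu$ a horizontal strip, and $D$ a desarrangement tableau of shape $\mu$; the inverse is obtained by reconstructing $Q$ via the Pieri bijection (prepending the row tableau with entries $1, \ldots, k$) and then recovering $w$ via inverse RSK on $(P, Q)$. Under this bijection, comparing the defining formulas gives $\eig(w) = \eig(\lambda/\mu)$, using $\ell(w) = |\lambda|$, $\ell(w') = |\mu|$, $\diagonalindex(Q(w)) = \diagonalindex(\lambda)$, and $\diagonalindex(Q(w')) = \diagonalindex(\mu)$. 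The number of triples with fixed $(\lambda, \mu)$ equals $K_{\lambda,\nu} \cdot \ndestab^\mu$, matching the multiplicity formula in \autoref{thm:eigenvalues-of-R2R}. The excluded shapes $\mu = (n)$ and $\mu = (1, 1, \ldots, 1)$ for $n$ odd pose no issue, as they satisfy $\ndestab^\mu = 0$ and contribute no triples on either side.
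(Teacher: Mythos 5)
Your argument is correct and follows the same overall route as the paper's proof: reduce via RSK to a bijection, for each shape $\lambda$, between standard Young tableaux of shape $\lambda$ and pairs $(\mu, D)$ with $\lambda/\mu$ a horizontal strip and $D$ a desarrangement tableau of shape $\mu$. The paper cites this bijection from Reiner--Saliola--Welker \cite{RSW2014} and merely sketches the inverse via reverse jeu-de-taquin slides; you instead give a self-contained proof of it, which is the main thing your write-up adds. Your key steps---that maximality of $w'$ forces the prefix $u$ to be weakly increasing, that $Q(w')$ is the jeu-de-taquin rectification of the skew subtableau of $Q(w)$ on the labels $\{k+1,\ldots,n\}$, and that the Pieri rule together with the rectification interpretation of Littlewood--Richardson coefficients yields both the horizontal-strip condition and the uniqueness needed for bijectivity---are all sound. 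One detail you leave implicit but should record for completeness: for the inverse map you also need the converse of your prefix observation, namely that if $u$ is weakly increasing and $w'$ has even first ascent then $w'$ is automatically the longest suffix of $w$ with even first ascent (a similar parity check confirms this); that is what guarantees the reconstructed recording tableau $Q$ splits at position $k$ rather than at some larger index.
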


\begin{Example}
    \label{ex:eig-permutation}
    Let $w = 234133134$.
    Then $w' = 4133134$ is the longest suffix of $w$ whose first ascent
    is even (it occurs in position $2$ of $w'$).
    The recording tableau $Q(234133134)$ appears in \autoref{ex:recording}; and
    \begin{gather*}
        P(4133134) = \young(11334,3,4)
        \qquad
        Q(4133134) = \young(13467,2,5).
    \end{gather*}
    Since $\shape(Q(w)) = (6,2,1)$ and $\shape(Q(w')) = (5,1,1)$,
    we have
    \begin{align*}
        \eig(234133134)
        & =  \left[ \binom{10}{2} + \diagonalindex\big((6,2,1)\big) \right]
          - \left[ \binom{8}{2} + \diagonalindex\big((5,1,1)\big) \right]
        \\
        & = \left[45 + 12\right] - \left[28 + 7\right] = 22.
        \qedhere
    \end{align*}
\end{Example}

\begin{Example}
    \label{ex:R2R3-on-words}
    See \autoref{fig:R2R3-eigenvalues-RSK} for a computation of the eigenvalues for
    the random-to-random shuffle acting on words of length $3$ on the alphabet
    $\{1, 2, 3\}$. We list only the words whose evaluation is a partition.
    \begin{figure}[h]
        \begin{tabular}{ccccc}
            \toprule
            $\quad w \quad$ &
            $\quad w' \quad$ &
            $Q\left( w \right)$ &
            $Q\left( w' \right)$ &
            $\eig(w)$ \\
            \midrule
            $111$ & $\emptyset$ & $\tikztableauscript{{1,2,3},}$ & $\emptyset$ & $9$ \\
            \midrule
            $112$ & $\emptyset$ & $\tikztableauscript{{1,2,3},}$ & $\emptyset$ & $9$ \\
            $121$ & $21$ & $\tikztableauscript{{1,2},{3},}$ & $\tikztableauscript{{1},{2},}$ & $4$ \\
            $211$ & $211$ & $\tikztableauscript{{1,3},{2},}$ & $\tikztableauscript{{1,3},{2},}$ & $0$ \\
            \midrule
            $123$ & $\emptyset$ & $\tikztableauscript{{1,2,3},}$ & $\emptyset$ & $9$ \\
            $132$ & $32$ & $\tikztableauscript{{1,2},{3},}$ & $\tikztableauscript{{1},{2},}$ & $4$ \\
            $231$ & $31$ & $\tikztableauscript{{1,2},{3},}$ & $\tikztableauscript{{1},{2},}$ & $4$ \\
            $321$ & $21$ & $\tikztableauscript{{1},{2},{3},}$ & $\tikztableauscript{{1},{2},}$ & $1$ \\
            $213$ & $213$ & $\tikztableauscript{{1,3},{2},}$ & $\tikztableauscript{{1,3},{2},}$ & $0$ \\
            $312$ & $312$ & $\tikztableauscript{{1,3},{2},}$ & $\tikztableauscript{{1,3},{2},}$ & $0$ \\
            \bottomrule
        \end{tabular}
        \caption{Eigenvalues (scaled by $n^2$) for the random-to-random shuffle
            acting on words of length $n = 3$. (\cf \autoref{ex:R2R3-on-words}.)}
        \label{fig:R2R3-eigenvalues-RSK}
    \end{figure}
\end{Example}

\subsubsection{Equivalence of the eigenvalue descriptions}
\label{equivalence-of-the-eigenvalue-descriptions}

Next, we prove that the descriptions of the eigenvalues given in
\autoref{thm:eigenvalues-of-R2R} and \autoref{thm:eigenvalues-of-R2R-RSK}
are equivalent.

\begin{proof}[Proof that
    \autoref{thm:eigenvalues-of-R2R} and \autoref{thm:eigenvalues-of-R2R-RSK}
    are equivalent]
    The RSK correspondence is a bijection between words of evaluation
    $\nu$ and pairs of tableaux $(P, Q)$
    with $P$ a semi-standard tableau of evaluation $\nu$,
    and $Q$ a standard tableau such that $\shape(Q) = \shape(P)$ \cite{Knuth1970,Sagan2001}.
    Hence, if $w$ is a word of evaluation $\nu$, then
    \begin{equation*}
        \eig(w) = \eig\Big(\shape(P(w)) \big/ \shape(Q(\hat{w}))\Big),
    \end{equation*}
    where $\hat{w}$ is the longest suffix of $w$ whose first ascent is even,
    since $\shape(Q(w)) = \shape(P(w))$.

    Consequently, the result follows if, for each partition
    $\lambda$, there is a bijection between the set of standard tableaux of
    shape $\lambda$ and the set of desarrangement tableaux $\hat{Q}$ such that
    $\lambda/\shape(\hat{Q})$ is a horizontal strip. This is a result in
    \cite{RSW2014}; we briefly describe a bijection and refer to \cite{RSW2014}
    for details.

    Starting with $\hat{Q}$, perform ``reverse'' jeu-de-taquin slides \cite{Schutzenberger1977} on
    $\hat{Q}$ into the cells of $\lambda / \shape(\hat{Q})$ (starting with the
    leftmost empty cell and proceeding to the right).
    Then increment all the entries by $j = \size(\lambda / \shape(\hat{Q}))$.
    Properties of jeu-de-taquin \cite[Exercise 3.12.6]{Sagan2001} imply that
    we will create $j$ empty cells in the first row; filling these with
    $1, 2, \ldots, j$ results in a standard tableau of shape $\lambda$.
\end{proof}

\section{Eigenspaces}
\label{sec:eigenspaces}

This section presents a procedure to construct eigenvectors for the
random-to-random shuffle acting on words of length $n+1$ from eigenvectors for
the random-to-random shuffle acting on words of length $n$.
It turns out that all the eigenvectors can be obtained in this way,
except those that lie in the kernel.
This leads to an explicit construction of an eigenbasis starting from any basis
of the kernels of random-to-random (or equivalently, random-to-top) shuffles.

\subsection{Motivating examples}
\label{ssec:motivating-examples}

We begin with two special cases that serve to introduce the ideas underlying
our results.
In short, we will introduce a new letter into a word and consider how this
interacts with the random-to-random operator.
This turns out to admit a nice description when restricted to the Specht
submodules, which allows us to describe all the eigenvalues and eigenvectors of
the random-to-random shuffle. Indeed, the Specht submodules are subspaces that
are stable under the random-to-random shuffle and determining the eigenvalues
and eigenvectors reduces to determining them on each of the Specht submodules.
This reduction is sometimes called the \emph{Fourier transform} and will be
further discussed in \autoref{reduction-to-specht-modules}.

In \autoref{shuffles-as-linear-operators} we set up the environment and realize
the shuffling operators as linear transformations on the vector space of words.
In \autoref{sssec:specht-modules} we define the Specht submodules $S^\lambda$,
one for each partition $\lambda$.
In \autoref{sssec:level1lifting}, we define a linear transformation from
$S^\lambda$ to $S^{\lambda + \vec e_1}$, where $\lambda + \vec e_1$ is obtained
from $\lambda$ by adding $1$ to its first component, that maps eigenvectors of
the random-to-random shuffle acting on $S^\lambda$ to eigenvectors of the
random-to-random shuffle acting on $S^{\lambda+\vec e_1}$. We then use this to
construct an eigenbasis for $S^{(n-1, 1)}$.
In \autoref{sssec:level2lifting}, we modify the previous linear transformation to
construct eigenvectors in $S^{(2,2)}$ from eigenvectors in $S^{(2,1)}$.

\subsubsection{Shuffles as linear operators}
\label{shuffles-as-linear-operators}

We will interpret shuffles as linear transformations on the vector space
whose basis is the set of all words on a finite ordered alphabet $A$.
To illustrate the idea, consider the effect of performing a top-to-random
shuffle on a word $w_1 w_2 w_3 w_4$ (where $w_i \in A$); there are four
possible outcomes:
\begin{gather*}
    w_1w_2w_3\mathred{w_4}
    \qquad w_1w_2\mathred{w_4}w_3
    \qquad w_1\mathred{w_4}w_2w_3
    \qquad \mathred{w_4}w_1w_2w_3.
\end{gather*}
(Recall that our convention is that the last letter is the ``top'' card.)
The associated linear transformation is defined on words of length $4$ as
\begin{align}
    \label{ex:t2r-on-words}
    \TopR_4(w_1w_2w_3\mathred{w_4}) = w_1w_2w_3\mathred{w_4} + w_1w_2\mathred{w_4}w_3 + w_1\mathred{w_4}w_2w_3 + \mathred{w_4}w_1w_2w_3,
\end{align}
and is defined on arbitrary linear combinations of words of length $4$ by
linearity: $\TopR_4(\sum_w c_w w) = \sum_w c_w \TopR_4(w)$.
The matrix of this linear transformation is a rescaling (by a factor of $n$) of
the transition matrix of the top-to-random shuffle.
Similarly, one defines $\RTop_n$ and $\Rand_n$.
(For the explicit definitions, see \autoref{sec:operators-in-group-algebra}.)

For a word $w$ on the ordered alphabet $A = \{a_1, a_2, a_3, \dots\}$,
let $\Int_i(w)$ denote the sum of all words obtained from $w$ by inserting the
$i$-th letter of $A$; explicitly,
\begin{gather*}
    \Int_i(w_1\cdots w_n)
    = \sum_{0\leq j \leq n} w_1 \cdots w_{j} \cdot \mathred{a_i} \cdot w_{j+1} \cdots w_n.
\end{gather*}
By extending $\Int_i$ by linearity, we obtain a linear transformation on the
vector space of words, which we also denote by $\Int_i$.
It turns out that the composition $\Rand_{n+1} \circ \Int_i$ admits a nice
description when we restrict to certain subspaces, which we describe next.

\subsubsection{Specht modules}
\label{sssec:specht-modules}

Let $\lambda$ be a partition of $n$ and let $t$ be
a tableau of shape $\lambda$ whose entries are $1, 2, \ldots, n$ (each
appearing exactly once).
Let $\word(t)$ denote the word over the alphabet $A = \{a_1, a_2, a_3, \dots\}$
in which the $i$-th letter is $a_r$, where $r$ is the number of the row of $t$
that contains $i$.
Note that the evaluation of $\word(t)$ is $\lambda$ because it contains
$\lambda_1$ occurrences of $a_1$, $\lambda_2$ occurrences of $a_2$, and so on.
(We are implicitly assuming that $A$ has at least as many letters as $t$ has rows.)

Next, let $M^\lambda$ denote the vector space spanned by words of evaluation
$\lambda$ and
\begin{gather*}
    \w_t = \sum_{\sigma \in \ColStab(t)} \sign(\sigma) \word(\sigma(t)) \in M^\lambda,
\end{gather*}
where the sum ranges over all the permutations $\sigma$ of size $n$ that
permute the entries in the columns of $t$ in all possible ways.
The \emph{Specht module} $S^\lambda$ is the subspace
\begin{gather*}
    S^\lambda = \spn\left\{ \w_t : t \in \SYT_\lambda\right\} \subseteq M^\lambda,
\end{gather*}
where $\SYT_\lambda$ denotes the set of standard (Young) tableaux of shape
$\lambda$. (\autoref{remark:specht-modules-via-polytabloids} explains the
relationship with the construction of Specht modules via polytabloids).

\begin{Example}
    \label{ex:specht-modules}
    For our examples,
    we will work with the alphabet $A = \{a,b,c,\dots\}$.
    \begin{align*}
        S^{(1,1)}
        &= \spn\left\{
            \hspace{-0.5em}
            \begin{array}{c}
                \w_{\!\!\!\tikztableauscript{{1},{2}}\!\!\!}
            \end{array}
            \hspace{-0.5em}
            \right\}
        = \spn\left\{
            ab - ba
            \right\}.
        \\
        S^{(2,1)}
        &= \spn\left\{
            \hspace{-0.5em}
            \begin{array}{cc}
                \w_{\!\!\!\tikztableauscript{{1,2},{3}}\!\!\!},
                &
                \w_{\!\!\!\tikztableauscript{{1,3},{2}}\!\!\!}
            \end{array}
            \hspace{-0.5em}
            \right\}
        = \spn\left\{
            aab - baa, \,
            aba - baa
        \right\}.
        \\
        S^{(3,1)}
        &= \spn\left\{
            \hspace{-0.5em}
            \begin{array}{ccc}
                \w_{\!\!\!\tikztableauscript{{1,2,3},{4}}\!\!\!},
                &
                \w_{\!\!\!\tikztableauscript{{1,2,4},{3}}\!\!\!},
                &
                \w_{\!\!\!\tikztableauscript{{1,3,4},{2}}\!\!\!}
            \end{array}
            \hspace{-0.5em}
        \right\}
        = \spn\left\{
            \begin{array}{c}
            aaab - baaa, \\
            aaba - baaa, \\
            abaa - baaa
            \end{array}
        \right\}.
        \\
        S^{(2,2)}
        &= \spn\left\{
            \hspace{-0.5em}
            \begin{array}{cc}
                \w_{\!\!\!\tikztableauscript{{1,2},{3,4}}\!\!\!},
                &
                \w_{\!\!\!\tikztableauscript{{1,3},{2,4}}\!\!\!}
            \end{array}
            \hspace{-0.5em}
        \right\}
        = \spn\left\{
            \begin{array}{c}
                aabb - abba - baab + bbaa, \\
                abab - abba - baab + baba
            \end{array}
        \right\}.
        \qedhere
    \end{align*}
\end{Example}

\subsubsection{Level 1 lifting}
\label{sssec:level1lifting}

It turns out that $\Int_1$ is a linear transformation from $S^\lambda$
to $S^{\lambda + \vec e_1}$, where $\lambda + \vec e_1$ is obtained from
$\lambda$ by adding $1$ to its first component, that maps eigenvectors of
$\Rand_n$ to eigenvectors of $\Rand_{n+1}$.

\begin{Proposition}[Special case of \autoref{thm:lifting-eigenvectors}]
    \label{prop:lifting-eigenvectors-1}
    Let $\lambda = (\lambda_1, \lambda_2, \dots, \lambda_r) \vdash n$.

    If $v \in S^\lambda$ is an eigenvector of $\Rand_n$ with eigenvalue
    $\varepsilon$, then
    \begin{equation*}
        \Int_1(v) \in S^{(\lambda_1 + 1, \lambda_2, \dots, \lambda_r)}
    \end{equation*}
    and
    \begin{equation*}
        \Rand_{n+1}\left( \Int_1(v) \right)
        =
        \left(\varepsilon + (n+1) + \lambda_1\right) \, \Int_1(v).
    \end{equation*}
    Thus, $\Int_1(v)$ is an eigenvector of $\Rand_{n+1}$ with eigenvalue
    $\varepsilon + (n+1) + \lambda_1$.
\end{Proposition}

\begin{Example}
    \label{ex:eigenbases-for-hook}
    Note that $ab - ba \in S^{(1,1)}$ belongs to the kernel of $\Rand_2$.
    Hence,
    \begin{gather*}
        \Int_1(ab - ba) = 2 \left(aab - baa\right)
    \end{gather*}
    belongs to $S^{(2,1)}$ and it is an eigenvector $\Rand_3$ with eigenvalue
    $0 + (2+1) + 1 = 4$.
    Since $S^{(2,1)}$ is $2$-dimensional and we have one eigenvector, we obtain
    a second eigenvector by taking any vector in $S^{(2,1)}$ that is orthogonal
    to the first. Hence,
    \begin{align*}
        aab - 2 aba + baa
    \end{align*}
    is an eigenvector; it belongs to the kernel of $\Rand_3$.

    Applying $\Int_1$ to the above two eigenvectors in $S^{(2,1)}$, we obtain
    \begin{align*}
        \Int_1(aab - baa)
        &= 3 aaab + aaba - abaa - 3 baaa, \\
        \Int_1(aab - 2 aba + baa)
        &= 3 \left( aaab - aaba - abaa + baaa \right),
    \end{align*}
    which are eigenvectors of $\Rand_4$ with eigenvalues $10$ and $6$,
    respectively. Moreover, they belong to $S^{(3,1)}$.
    The orthogonal complement of the subspace spanned by these two vectors,
    with respect to the inner product in which words form an orthonormal basis, is
    $1$-dimensional and yields an element of the kernel of $\Rand_4$:
    \begin{gather*}
        aaab - 3 aaba + 3 abaa - baaa.
        \qedhere
    \end{gather*}
\end{Example}

\begin{Remark}[Eigenbasis for $S^{(n-1,1)}$]
    By iterating $\Int_1$, as in the above example, we obtain a method to
    recursively generate an eigenbasis for $\Rand_n$ in $S^{(n-1,1)}$.
    \begin{itemize}
        \item
            Apply $\Int_1$ to an eigenbasis for $S^{(n-2,1)}$;
            this gives $n-2$ eigenvectors with distinct positive integer eigenvalues.

        \item
            The last eigenvector belongs to the kernel of $\Rand_n$; it is
            \begin{gather*}
                \sum_{j = 1}^{n} (-1)^{j-1} \binom{n - 1}{j - 1}
                \,
                \overbrace{aa \cdots a}^{j-1} \, b \, \overbrace{aa \cdots a}^{n-j}.
            \end{gather*}
    \end{itemize}
    To see that this lies in the kernel of $\Rand_n$, first observe
    that it is sufficient to show that this lies in the kernel of $\RTop_n$.
    Next, consider a term $a^{j-1} b a^{n-j}$ in the image of the above
    element under $\RTop_n$:
    \begin{itemize}
        \item
            it can arise from moving an occurrence of $a$ from the factor
            $a^{n-j}$ of $a^{j-1} b a^{n-j}$ to the end of the word--there
            are $n-j$ ways to do this;
        \item
            it can arise from moving an occurrence of $a$ from the factor
            $a^{j}$ of $a^{j} b a^{n-j-1}$ to the end of the word---there
            are $j$ ways to do this;
        \item
            so the coefficient of this term is
            $(n - j) (-1)^{j-1} \binom{n-1}{j-1} + j (-1)^{j} \binom{n-1}{j} = 0$.
    \end{itemize}
\end{Remark}

Consequently, we recover the following result of Uyemura-Reyes
\cite[Theorem~5.4]{Reyes2002} on the eigenvalues of the random-to-random
Markov chain acting on $S^{(n-1,1)}$. Recall that the eigenvalues coincide
with the eigenvalues of $\frac{1}{n^2} \Rand_n$.

\begin{Corollary}[Uyemura-Reyes]
    The random-to-random Markov chain acting on $S^{(n-1,1)}$ has
    a simple eigenvalue $\frac{1}{n^2}\left(n(n-1) - j(j-1)\right)$
    for each $j \in \{2, 3, \dots, n\}$.
\end{Corollary}
Uyemura-Reyes also constructed eigenvectors for $\Rand_n$ acting on
$M^{(n-1,1)} \cong S^{n} \oplus S^{(n-1,1)}$ in which the entries of the
eigenvectors are obtained by evaluating the discrete Chebyshev and Legendre
orthogonal polynomials \cite[Lemma~5.15]{Reyes2002}.

\subsubsection{Level 2 lifting}
\label{sssec:level2lifting}

We need a modified version of the linear transformation $\Int_1$ to
construct eigenvectors in $S^{(2,2)}$ from eigenvectors in $S^{(2,1)}$.

In \autoref{ex:eigenbases-for-hook} we determined a basis of $S^{(2,1)}$ consisting
of eigenvectors for $\Rand_3$.
Naively, we apply $\Int_2$ to one of these eigenvectors and obtain
\begin{align*}
    \Int_2(aab - 2 aba + baa) =
    2 aabb - abab - 4 abba + 2 baab - baba + 2 bbaa.
\end{align*}
This is not an eigenvector of $\Rand_4$;
moreover, it does not even belong to $S^{(2,2)}$.
However, we find that the above can be written as
\begin{equation}
    \label{eqn:second-shuffle}
    \begin{aligned}
        &\Int_2(aab - 2 aba + baa) =
        \\
        & \qquad \qquad \quad
        \big( 2 aabb - abab - abba - baab - baba + 2 bbaa \big)
        - 3 \big( abba - baab \big).
    \end{aligned}
\end{equation}

The first summand of \autoref{eqn:second-shuffle} belongs to $S^{(2,2)}$,
since by comparing with \autoref{ex:specht-modules} we see that it is equal to
\begin{gather*}
    2 \, \w_{\!\!\!\tikztableauscript{{1,2},{3,4}}}
    - \w_{\!\!\!\tikztableauscript{{1,3},{2,4}}}.
\end{gather*}
Moreover, this is an eigenvector for $\Rand_4$ with eigenvalue $4$.

The second summand of \autoref{eqn:second-shuffle} is also an eigenvector for
$\Rand_4$ (eigenvalue $6$). Moreover, it can be obtained directly from
$aab - 2aba + baa$: start with
\begin{gather*}
    \Int_1(aab - 2aba + baa) = 3 \left(aaab - aaba - abaa + baaa\right),
\end{gather*}
and then substitute each word by the sum of all words obtained from it by
replacing an occurrence of $a$ (the first letter of the alphabet)
by $b$ (the second letter),
\begin{gather*}
    \Repl_{1,2}\big(\Int_1(aab - 2aba + baa)\big)
    = -6 \left( abba - baab \right).
\end{gather*}

To summarize, from the eigenvector $aab - 2aba + baa$ (eigenvalue $0$), we obtain
an eigenvector with eigenvalue $4$ that belongs to $S^{(2,2)}$ by applying the
operator
\begin{equation}
    \label{ex:lifting-formula-for-21-to-22}
    \Int_2 - \frac{1}{2} \Repl_{1,2} \circ \Int_1.
\end{equation}

\begin{Proposition}[Special case of \autoref{thm:lifting-eigenvectors}]
    \label{prop:lifting-eigenvectors-2}
    Let $\lambda = (\lambda_1, \lambda_2, \dots, \lambda_r)$ be a partition
    of $n$ such that
    $\lambda + \vec e_2 = (\lambda_1, \lambda_2 + 1, \dots, \lambda_r)$
    is a partition of $n+1$.

    If $v \in S^\lambda$ is an eigenvector of $\Rand_n$ with eigenvalue
    $\varepsilon$, then
    \begin{equation*}
        \Lift_2^\lambda(v)
        = \Int_2(v) + \frac{1}{(\lambda_2 - 2) - (\lambda_1 - 1)} \Repl_{1, 2}\big(\Int_1(v)\big)
    \end{equation*}
    belongs to the Specht module $S^{\lambda + \vec e_2}$ and
    \begin{equation*}
        \Rand_{n+1}\left( \Lift_2^\lambda(v) \right)
        =
        \left(\varepsilon + (n+1) + (\lambda_2 + 1) - 2\right) \, \Lift_2^\lambda(v).
    \end{equation*}
    Thus, $\Lift_2^\lambda(v)$, if nonzero, is an eigenvector of $\Rand_{n+1}$
    with eigenvalue $$\varepsilon + (n+1) + (\lambda_2 + 1) - 2.$$
\end{Proposition}

\subsection{Lifting eigenvectors}
\label{ssec:general-case}

We introduce linear transformations
$\Lift_i^\lambda$, from $S^\lambda$ to $S^{\lambda + \vec e_i}$,
for $i \geq 1$, that map eigenvectors of $\Rand_n$ to eigenvectors of
$\Rand_{n+1}$.
When $i = 1$, this operator coincides with the operator $\Int_1$ from
\autoref{sssec:level1lifting}.
For $i > 1$, they are modified versions of the $\Int_i$ operators
(\cf \autoref{sssec:level2lifting}).

\subsubsection{Lifting operators}
\label{sssec:lifting operators}

Let $A = \{a_1, a_2, a_3, \dots\}$ be an ordered alphabet.
Define the \emph{replacement operator} $\Repl_{i,j}$ to be the linear
transformation that maps a word $w = w_1 w_2 \cdots w_n$ to the sum of all the
words that can be obtained from $w$ by replacing exactly one occurrence of
$a_i$ in $w$ by $a_j$; explicitly,
\begin{align*}
    \Repl_{i, j}(w) = \sum_{\substack{1\leq k\leq n \\ w_k = \mathred{a_i}}}
                w_1 \cdots w_{k-1} \cdot \mathblue{a_j} \cdot w_{k+1} \cdots w_n.
\end{align*}

\begin{Example}
    For the usual alphabet $A = \{a, b, c, \dots\}$, we have
    \begin{gather*}
        \Repl_{\mathred{1}, \mathblue 2}(aaab) = {\mathblue b}aab + a{\mathblue b}ab + aa{\mathblue b}b.
        \qedhere
    \end{gather*}
\end{Example}

Recall that $M^\lambda$ is the vector space spanned by words of evaluation
$\lambda$ and that the Specht module $S^\lambda$ is a subspace of $M^\lambda$
(\cf \autoref{sssec:specht-modules}).
Hence, $M^\lambda = S^\lambda \oplus (S^\lambda)^\perp$,
where the orthogonal complement is computed with respect to the inner product
on $M^\lambda$ in which words form an orthonormal basis.
Let $\proj:M^\lambda \to S^\lambda$ be the corresponding projection onto
$S^\lambda$.
(See \autoref{sssec:isotypic-projectors} for a reformulation
in terms of ``isotypic projectors''.)



\begin{Theorem}
    \label{thm:lifting-eigenvectors}
    Let $\lambda = (\lambda_1, \lambda_2, \dots, \lambda_l)$
    be a partition of $n$ and
    let $1 \leq i \leq l + 1$ be such that
    $\lambda + \vec e_i = (\lambda_1, \lambda_2, \dots,
        \lambda_{i-1}, \lambda_i + 1, \lambda_{i+1}, \dots, \lambda_l)$
    is a partition of $n + 1$.

    Consider the composition of $\Int_i|_{S^\lambda}$, $1 \leq i \leq l+1$, with the
    projection onto the submodule
    $S^{\lambda + \vec e_i}$ of $M^{\lambda + \vec e_i}$:
    \begin{equation*}
        \Lift_i^\lambda :
        S^{\lambda} \xrightarrow{\Int_i} M^{\lambda + \vec e_i}
                    \xrightarrow{\proj} S^{\lambda + \vec e_i}.
    \end{equation*}
    \begin{enumerate}[label=(\alph*)]
        \item\label{thm:lifting-eigenvectors-part-a}
            $\Lift_i^\lambda$ maps eigenvectors of $\Rand_n$ in $S^\lambda$ to
            eigenvectors of $\Rand_{n+1}$ in $S^{\lambda + \vec e_i}$;
            explicitly,
            if $v \in S^{\lambda}$ is an eigenvector of $\Rand_n$ of eigenvalue
            $\varepsilon$, then
            $\Lift_i^\lambda(v)$, if nonzero, is an eigenvector of
            $\Rand_{n+1}$ with eigenvalue
            $\varepsilon + (n+1) + (\lambda_i + 1) - i$.

        \item\label{thm:lifting-eigenvectors-part-b}
            $\Lift_i^\lambda$ admits the following description:
            \begin{equation}
                \label{eq:general-lifting}
                \Lift_i^\lambda =
                \sum_{1 \leq b_1 < \cdots < b_t < b_{t+1} = i}
                \left(
                    \prod_{j=1}^{t}
                        \frac{1}{(\lambda_i - i) - (\lambda_{b_j} - {b_j})} \Repl_{b_j, b_{j+1}}
                \right)
                \Int_{b_1}.
            \end{equation}
    \end{enumerate}
\end{Theorem}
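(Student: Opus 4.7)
My plan is to establish a commutation-like identity between the random-to-random operator and the insertion operators, then use the Specht-submodule decomposition of $M^{\lambda + \vec e_i}$ to isolate the component in $S^{\lambda + \vec e_i}$. The guiding observation is that $\Int_i(v)$ does not generally lie in $S^{\lambda + \vec e_i}$, but that its projection there behaves like a Jucys--Murphy-type shift, which is what makes the eigenvalue change by a predictable integer.

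First, I would expand $\Rand_{n+1} = \RTop_{n+1} \circ \TopR_{n+1}$ on a word of the form $\Int_i(w)$ and split the sum according to whether the card shuffled by $\Rand_{n+1}$ is the newly inserted copy of $a_i$ or one of the original letters. This bookkeeping should yield a clean identity of the rough shape
\begin{equation*}
    \Rand_{n+1} \circ \Int_i \;=\; \Int_i \circ \Rand_n \;+\; c_i \, \Int_i \;+\; \sum_{j \neq i} \text{(coupling terms involving } \Repl_{j,i} \text{ and } \Int_j\text{)},
\end{equation*}
with explicit scalar coefficients depending on $n$ and $i$. I would then restrict to an eigenvector $v \in S^\lambda$ with $\Rand_n v = \varepsilon v$ and apply $\proj$ to both sides. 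By Young's branching rule, $M^{\lambda + \vec e_i}$ decomposes as a direct sum of Specht modules indexed by the partitions that dominance-cover $\lambda$, and the coupling terms are designed to cancel or to survive as scalar multiples of $\Lift_i^\lambda(v)$ after projection. The surviving scalar contributions should collect into the eigenvalue $\varepsilon + (n+1) + (\lambda_i + 1) - i$, proving part \ref{thm:lifting-eigenvectors-part-a}.

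For part \ref{thm:lifting-eigenvectors-part-b}, I would argue by induction on $i$. When $i=1$, no projection is needed: since $\lambda + \vec e_1$ is dominance-maximal among partitions reachable from $\lambda$ by adding a cell, $\Int_1$ already lands in $S^{\lambda+\vec e_1}$, recovering the $t=0$ summand of \eqref{eq:general-lifting}. For $i > 1$, the image $\Int_i(v)$ picks up unwanted contributions in each $S^{\lambda + \vec e_b}$ with $b < i$, which can be corrected by subtracting an appropriate scalar multiple of $\Repl_{b,i}\bigl(\Lift_b^\lambda(v)\bigr)$. The prefactor $1/\bigl((\lambda_i - i) - (\lambda_b - b)\bigr)$ arises because $\Repl_{b,i}$, restricted to the isotypic image of $\Lift_b^\lambda$, acts by exactly this difference of contents of the cells $(b,\lambda_b+1)$ and $(i,\lambda_i+1)$. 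Iterating this Gram--Schmidt-style correction along chains $b_1 < b_2 < \cdots < b_t < b_{t+1} = i$ produces the telescoping product in \eqref{eq:general-lifting}.

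The main obstacle will be making Step 1 precise: producing the commutation identity in a form clean enough to survive projection. The raw expansion of $\Rand_{n+1} \circ \Int_i$ on a single word generates a large sum of terms involving all possible (remove, reinsert) positions relative to the inserted $a_i$, and these must be reassembled into compositions of the operators $\Int_j$, $\Repl_{j,k}$, and $\Rand_n$. A secondary obstacle is proving that $\Repl_{b,i}$ acts on the relevant Specht summands by content differences, so that the explicit scalars in part \ref{thm:lifting-eigenvectors-part-b} emerge; this is essentially a representation-theoretic statement about how the symmetric group algebra interacts with the insertion filtration, and I expect it to require delicate use of polytabloid bases (\emph{cf.} \autoref{remark:specht-modules-via-polytabloids}) rather than a direct calculation on words.
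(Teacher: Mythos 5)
Your plan tracks the paper's own proof closely: you correctly anticipate the commutation identity $\Rand_{n+1}\circ\Int_a - \Int_a\circ\Rand_n = (n+1)\Int_a + \sum_b \Int_b\circ\Repl_{b,a}$ (\autoref{BracketRandInt}), its simplification on $S^\lambda$ via the vanishing of $\Repl_{b,a}$ for $b>a$ (\autoref{vanishingRab}), the isotypic projection giving the eigenvalue shift (\autoref{liftingeigenvectors2}), and the induction on $i$ for \eqref{eq:general-lifting}. Two sub-steps differ from what the paper actually does. First, the projection bookkeeping does not rest on Young's rule for the full module $M^{\lambda+\vec e_i}$, which in general has high multiplicities; what is used (\autoref{l:SubmoduleGeneratedByLifts}) is that $\Int_i(S^\lambda)$ lies inside a single multiplicity-free submodule isomorphic to $\bigoplus_{r\le i}S^{\lambda+\vec e_r}$, obtained by applying the branching rule to $\Ind_{\symm_n\times\symm_1}^{\symm_{n+1}}(S^\lambda\otimes S^1)$; that multiplicity-freeness is what makes the isotypic projectors deterministic. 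Second, you anticipate that the content-difference coefficients in part~\ref{thm:lifting-eigenvectors-part-b} will require a delicate direct computation of how $\Repl_{b,i}$ acts on polytabloids. The paper sidesteps this entirely: \autoref{projlift-identity} extracts $\gamma_{r,a}=(\lambda_r-r)-(\lambda_a-a)$ for free by subtracting the exact scalar commutation of \autoref{liftingeigenvectors2} from the identity of \autoref{MainLemma}, so the content difference drops out as a difference of two eigenvalue shifts rather than as a computed scalar action of $\Repl_{b,i}$, and the nested sums in the induction then collapse via a Vandermonde-type cancellation (\autoref{vandermonde-lemma}). Your polytabloid route would probably succeed but is substantially more laborious than necessary.
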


When $\lambda$ is clear from the context, we will write $\Lift_i$ instead of
$\Lift_i^\lambda$.

\begin{proof}
    \autoref{sec:main-proofs} is dedicated to proving this result:
    \autoref{thm:lifting-eigenvectors-part-a} is a special case of
    \autoref{liftingeigenvectors2};
    and \autoref{thm:lifting-eigenvectors-part-b} follows from
    \autoref{reformulation-of-morphisms}.
    See the beginning of \autoref{sec:main-proofs}
    for a detailed outline of the argument.
\end{proof}

The next two examples show that the operators from previous sections are
special cases of \autoref{eq:general-lifting}.

\begin{Example}
    \autoref{prop:lifting-eigenvectors-1} is a special case of
    \autoref{thm:lifting-eigenvectors}. Indeed, if $i = 1$, then there is only one
    sequence satisfying the conditions of the summation in
    \autoref{eq:general-lifting}, namely $(b_{1}) = (1)$.
    Thus, \autoref{eq:general-lifting} reduces to $\Lift_1^\lambda = \Int_1$.
\end{Example}

\begin{Example}

    Let $\lambda = (2,1)$ and $i = 2$. There are two sequences satisfying the
    conditions of the summation in \autoref{eq:general-lifting}, namely $(b_{1})
    = (2)$ and $(b_1, b_2) = (1,2)$:
    \begin{align*}
        \Lift_2^{(2,1)} =
        \Int_2 + \frac{1}{(\lambda_2 - 2) - (\lambda_1 - 1)} \Repl_{1,2} \circ \Int_1
        =
        \Int_2 - \frac{1}{2} \Repl_{1,2} \circ \Int_1,
    \end{align*}
    which coincides with the linear transformation in
    \autoref{ex:lifting-formula-for-21-to-22}.
    We saw that this maps the eigenvector $aab - 2aba + baa \in S^{(2,1)}$ with
    eigenvalue $0$ to an eigenvector in $S^{(2,2)}$ with eigenvalue
    $0 + (n + 1) + (\lambda_2 + 1) - 2 = 4$.
\end{Example}

\subsubsection{Graphical description}
Here is a graphical description of the operator $\Lift_i^\lambda$.
Consider the following directed graph (refer to \autoref{fig:composition-graphs}
for an example).
\begin{itemize}
    \item
        The vertices of the directed graph consist of $\lambda$ and
        $\lambda + \vec e_j$ for $1 \leq j \leq i$;
    \item
        for each $j$,
        there is an arrow $\lambda \xrightarrow{} \lambda + \vec e_j$ labelled
        $\Int_j$; and
    \item
        for $j < k$, there is an arrow
        $\lambda + \vec e_j \xrightarrow{} \lambda + \vec e_k$
        labelled
        $\frac{1}{(\lambda_i - i) - (\lambda_j - j)} \Repl_{j, k}.$
\end{itemize}
Then $\Lift_i^\lambda$ has one summand for each path in the directed
graph that begins at $\lambda$ and ends at $\lambda + \vec e_i$;
the corresponding summand is the composition of the linear transformations
labelling the edges.

\begin{figure}[b]
    \centering
    \begin{subfigure}[c]{0.35\linewidth}
        \begin{tikzpicture}[scale=0.60, >=latex,line join=bevel,]
        \node (4+1+1) at (90.999bp,126bp) [draw,draw=none] {${\def\lr#1{\multicolumn{1}{|@{\hspace{.6ex}}c@{\hspace{.6ex}}|}{\raisebox{-.3ex}{$#1$}}}\raisebox{-.6ex}{$\begin{array}[b]{*{4}c}\cline{1-4}\lr{\phantom{x}}&\lr{\phantom{x}}&\lr{\phantom{x}}&\lr{\phantom{x}}\\\cline{1-4}\lr{\phantom{x}}\\\cline{1-1}\lr{\phantom{x}}\\\cline{1-1}\end{array}$}}$};
        \node (3+2+1) at (24.999bp,22bp) [draw,draw=none] {${\def\lr#1{\multicolumn{1}{|@{\hspace{.6ex}}c@{\hspace{.6ex}}|}{\raisebox{-.3ex}{$#1$}}}\raisebox{-.6ex}{$\begin{array}[b]{*{3}c}\cline{1-3}\lr{\phantom{x}}&\lr{\phantom{x}}&\lr{\phantom{x}}\\\cline{1-3}\lr{\phantom{x}}&\lr{\phantom{x}}\\\cline{1-2}\lr{\phantom{x}}\\\cline{1-1}\end{array}$}}$};
        \node (3+1+1) at (24.999bp,230bp) [draw,draw=none] {${\def\lr#1{\multicolumn{1}{|@{\hspace{.6ex}}c@{\hspace{.6ex}}|}{\raisebox{-.3ex}{$#1$}}}\raisebox{-.6ex}{$\begin{array}[b]{*{3}c}\cline{1-3}\lr{\phantom{x}}&\lr{\phantom{x}}&\lr{\phantom{x}}\\\cline{1-3}\lr{\phantom{x}}\\\cline{1-1}\lr{\phantom{x}}\\\cline{1-1}\end{array}$}}$};
        \draw [BrickRed,->] (4+1+1) ..controls (67.767bp,89.391bp) and (54.87bp,69.07bp)  .. (3+2+1);
        \definecolor{strokecol}{rgb}{0.0,0.0,0.0};
        \pgfsetstrokecolor{strokecol}
        \draw (89.499bp,74bp) node {$-\frac{1}{3}\Repl_{1,2}$};
        \draw [RoyalBlue,->] (3+1+1) ..controls (48.232bp,193.39bp) and (61.128bp,173.07bp)  .. (4+1+1);
        \draw (80.499bp,178bp) node {$\Int_1$};
        \draw [RoyalBlue,->] (3+1+1) ..controls (11.72bp,192.08bp) and (4.8811bp,168.95bp)  .. (1.9991bp,148bp) .. controls (-0.66638bp,128.63bp) and (-0.66638bp,123.37bp)  .. (1.9991bp,104bp) .. controls (4.3407bp,86.981bp) and (9.2945bp,68.52bp)  .. (3+2+1);
        \draw (19.499bp,126bp) node {$\Int_2$};
        \end{tikzpicture}
    \end{subfigure}
    \begin{subfigure}[c]{0.4\linewidth}
        \begin{tikzpicture}[scale=0.60, >=latex,line join=bevel,]
        \node (4+2) at (96bp,212bp) [draw,draw=none] {${\def\lr#1{\multicolumn{1}{|@{\hspace{.6ex}}c@{\hspace{.6ex}}|}{\raisebox{-.3ex}{$#1$}}}\raisebox{-.6ex}{$\begin{array}[b]{*{4}c}\cline{1-4}\lr{\phantom{x}}&\lr{\phantom{x}}&\lr{\phantom{x}}&\lr{\phantom{x}}\\\cline{1-4}\lr{\phantom{x}}&\lr{\phantom{x}}\\\cline{1-2}\end{array}$}}$};
        \node (3+2+1) at (108bp,22bp) [draw,draw=none] {${\def\lr#1{\multicolumn{1}{|@{\hspace{.6ex}}c@{\hspace{.6ex}}|}{\raisebox{-.3ex}{$#1$}}}\raisebox{-.6ex}{$\begin{array}[b]{*{3}c}\cline{1-3}\lr{\phantom{x}}&\lr{\phantom{x}}&\lr{\phantom{x}}\\\cline{1-3}\lr{\phantom{x}}&\lr{\phantom{x}}\\\cline{1-2}\lr{\phantom{x}}\\\cline{1-1}\end{array}$}}$};
        \node (3+2) at (96bp,304bp) [draw,draw=none] {${\def\lr#1{\multicolumn{1}{|@{\hspace{.6ex}}c@{\hspace{.6ex}}|}{\raisebox{-.3ex}{$#1$}}}\raisebox{-.6ex}{$\begin{array}[b]{*{3}c}\cline{1-3}\lr{\phantom{x}}&\lr{\phantom{x}}&\lr{\phantom{x}}\\\cline{1-3}\lr{\phantom{x}}&\lr{\phantom{x}}\\\cline{1-2}\end{array}$}}$};
        \node (3+3) at (20bp,120bp) [draw,draw=none] {${\def\lr#1{\multicolumn{1}{|@{\hspace{.6ex}}c@{\hspace{.6ex}}|}{\raisebox{-.3ex}{$#1$}}}\raisebox{-.6ex}{$\begin{array}[b]{*{3}c}\cline{1-3}\lr{\phantom{x}}&\lr{\phantom{x}}&\lr{\phantom{x}}\\\cline{1-3}\lr{\phantom{x}}&\lr{\phantom{x}}&\lr{\phantom{x}}\\\cline{1-3}\end{array}$}}$};
        \draw [BrickRed,->] (4+2) ..controls (98.944bp,165.39bp) and (103.38bp,95.104bp)  .. (3+2+1);
        \definecolor{strokecol}{rgb}{0.0,0.0,0.0};
        \pgfsetstrokecolor{strokecol}
        \draw (132.5bp,120bp) node {$-\frac{1}{5}\Repl_{1,3}$};
        \draw [BrickRed,->] (3+3) ..controls (13.789bp,91.915bp) and (12.85bp,74.479bp)  .. (21bp,62bp) .. controls (33.618bp,42.68bp) and (58.129bp,32.552bp)  .. (3+2+1);
        \draw (45.5bp,74bp) node {$-\frac{1}{3}\Repl_{2,3}$};
        \draw [BrickRed,->] (4+2) ..controls (51.295bp,196.96bp) and (26.848bp,186.9bp)  .. (21bp,178bp) .. controls (14.863bp,168.66bp) and (13.859bp,156.51bp)  .. (3+3);
        \draw (48.5bp,166bp) node {$-\frac{1}{5}\Repl_{1,2}$};
        \draw [RoyalBlue,->] (3+2) ..controls (96bp,274.65bp) and (96bp,254.35bp)  .. (4+2);
        \draw (116.5bp,258bp) node {$\Int_1$};
        \draw [RoyalBlue,->] (3+2) ..controls (54.352bp,281.49bp) and (21.122bp,258.63bp)  .. (7bp,228bp) .. controls (-5.137bp,201.68bp) and (1.9536bp,168.09bp)  .. (3+3);
        \draw (22.5bp,212bp) node {$\Int_2$};
        \draw [RoyalBlue,->] (3+2) ..controls (129.16bp,292.15bp) and (145.57bp,283.31bp)  .. (155bp,270bp) .. controls (190.01bp,220.56bp) and (174.42bp,196.33bp)  .. (180bp,136bp) .. controls (181.31bp,121.84bp) and (185.01bp,117.31bp)  .. (180bp,104bp) .. controls (171.16bp,80.537bp) and (152.22bp,59.418bp)  .. (3+2+1);
        \draw (198.5bp,166bp) node {$\Int_3$};
        \end{tikzpicture}
    \end{subfigure}
    \caption{Graphical depictions of $\Lift_2^{(3,1,1)}$ and $\Lift_3^{(3,2)}$.}
    \label{fig:composition-graphs}
\end{figure}

\begin{Example}
    \label{ex:liftings-for-321}
    Referring to the directed graphs in \autoref{fig:composition-graphs},
    \begin{gather*}
        \Lift_2^{(3,1,1)} = \Int_2 - \frac{1}{3}\Repl_{1,2} \circ \Int_1
    \end{gather*}
    and
    \begin{gather*}
        \Lift_3^{(3,2)} =
        \Int_3 - \frac{1}{5} \Repl_{1,3} \circ \Int_1
               - \frac{1}{3} \Repl_{2,3} \circ \Int_2
               + \frac{1}{15}\Repl_{2,3} \circ \Repl_{1,2} \Int_1.
    \end{gather*}
    These two linear transformations, together with
    \begin{gather*}
        \Lift_1^{(2,2,1)} = \Int_1 : S^{(2,2,1)} \longrightarrow S^{(3,2,1)},
    \end{gather*}
    are all the ways to obtain eigenvectors for $\Rand_6$ in $S^{(3,2,1)}$
    using \autoref{thm:lifting-eigenvectors}.
    Moreover, all eigenvectors for $\Rand_6$ in $S^{(3,2,1)}$ can be obtained
    in this way.
\end{Example}

\subsection{Constructing eigenspaces and eigenbases}
\label{constructing-eigenspaces-and-eigenbases}

\begin{figure}[b]
    \captionsetup{width=0.9\textwidth}
    \centering
    \includegraphics[width=\textwidth]{r2r-lifting-operator-graph-311.pdf}
    \caption{
        A depiction of \autoref{thm:eigenspace-decomposition},
        which describes the eigenspace decomposition for
        $\Rand_n$ acting on $S^\lambda$.
        Each cell represents an eigenspace and the number in a cell
        is the corresponding eigenvalue.
        Here, we see how each of the non-kernel eigenspaces in $S^{(3,1,1)}$ is
        the image of some $\ker\left(\Rand^\mu\right)$ under $\Lift^{\lambda/\mu}$.
    }
    \label{fig:eigenspace-decomposition-311}
\end{figure}

In this section, we describe how to construct an explicit eigenbasis for
$\Rand_n$ acting on $S^\lambda$ starting with bases for the kernels of
$\Rand_m$ acting on $S^\mu$ for all partitions $\mu$ with $|\mu| = m \leq n$.

As we will see, all the eigenvectors of $\Rand_{n}$ acting on $S^{\lambda}$,
except those that lie in its kernel, can be obtained using the linear
transformations $\Lift_i$.

More precisely, if $v \in S^\lambda$ is an eigenvector of $\Rand_n$ that does
not belong to $\ker(\Rand_n)$, then we can write $v$ in the form
\begin{equation}
    \label{eq:v-as-Lift}
    v = \Lift_{i_1}\Big(v^{(1)}\Big)
      + \Lift_{i_2}\Big(v^{(2)}\Big)
      + \cdots
      + \Lift_{i_r}\Big(v^{(r)}\Big),
\end{equation}
where $v^{(j)}$ is an eigenvector of $\Rand_{n-1}$ in some Specht
module $S^\mu$ with $\lambda = \mu + \vec e_{i_j}$.
If $v^{(j)} \notin \ker(\Rand_{n-1})$,
then we can apply the same reasoning to write $v^{(j)}$ in the form
given by \autoref{eq:v-as-Lift}.
Continuing in this way, we can express $v$ as a sum of images of elements in
the kernel of some $\Rand_{n-k}$ under maps of the following form.

\begin{Definition}
    For any skew partition $\lambda/\mu$, let $r_1 \leq r_2 \leq \dots \leq r_k$
    be such that $\lambda$ is obtained from $\mu$ by adding cells in rows $r_1, \dots,
    r_k$ (counting multiplicities). Define
    \begin{gather*}
        \Lift^{\lambda/\mu} = \Lift_{r_k} \circ \cdots \circ \Lift_{r_2} \circ \Lift_{r_1}.
    \end{gather*}
\end{Definition}

By the above, every eigenvector of $\Rand_n$ in $S^\lambda$ is a sum of elements of the
form $\Lift^{\lambda/\mu}(u)$ for partitions $\mu \subseteq \lambda$ and
some element $u \in S^\mu$ in the kernel of $\Rand_{|\mu|}$.
It will follow from \autoref{image-of-composition-of-shuffles}
that $\Lift^{\lambda/\mu} = 0$ if $\lambda/\mu$ is not a horizontal
strip (\ie, if $\lambda/\mu$ contains two cells in the same column).

For a partition $\mu$ of $m$, let $\Rand^\mu$ denote the restriction of
$\Rand_m$ to $S^\mu$.

\begin{Theorem}
    \label{thm:eigenspace-decomposition}
    Let $\lambda$ be a partition of $n$. Then, as vector spaces,
    \begin{gather*}
        S^\lambda
        = \bigoplus_{
            \substack{
                \mu \colon \lambda/\mu \text{~is a} \\
                \text{horizontal strip}
            }
        }
        \Lift^{\lambda/\mu}\Big(\ker \Rand^{\mu}\Big).
    \end{gather*}
    Furthermore, the $\varepsilon$-eigenspace of $\Rand_n$ acting on
    $S^\lambda$ is
    \begin{gather*}
        \bigoplus_{
            \substack{
                \mu \colon \lambda/\mu \text{~is a} \\
                \text{horizontal strip} \\
                \text{such that~} \eig(\lambda/\mu) = \varepsilon
            }
        }
        \Lift^{\lambda/\mu}\Big(\ker \Rand^\mu\Big).
    \end{gather*}
    Moreover, if $\mathcal K_{\mu}$ denotes a basis of the kernel of
    $\Rand$ acting on $S^\mu$, then
    \begin{gather*}
        \bigcup_{
            \substack{
                \mu \colon \lambda/\mu \text{~is a} \\
                \text{horizontal strip}
            }
        }
        \Lift^{\lambda/\mu}\left(\mathcal K_{\mu}\right)
    \end{gather*}
    is an eigenbasis for the action of $\Rand_n$ on $S^\lambda$.
\end{Theorem}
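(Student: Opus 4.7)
The plan is to combine iteration of \autoref{thm:lifting-eigenvectors} with induction on $n = |\lambda|$. First I would verify the eigenvalue formula: if $\lambda/\mu$ is a horizontal strip and we choose an ordering $r_1 \le r_2 \le \cdots \le r_k$ of the rows in which cells are added so that $\Lift^{\lambda/\mu} = \Lift_{r_k} \circ \cdots \circ \Lift_{r_1}$, then iterating \autoref{thm:lifting-eigenvectors} starting from $v \in \ker \Rand^\mu$ produces a total eigenvalue shift of
\[
\sum_{j=1}^{k}\bigl((|\mu|+j) + c_j\bigr) = \binom{|\lambda|+1}{2} - \binom{|\mu|+1}{2} + \diagonalindex(\lambda/\mu) = \eig(\lambda/\mu),
\]
where $c_j$ is the diagonal index of the $j$-th added cell. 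Thus each $\Lift^{\lambda/\mu}(\ker \Rand^\mu)$ sits inside the $\eig(\lambda/\mu)$-eigenspace of $\Rand^\lambda$. Forward-referenced results of the paper also imply $\Lift^{\lambda/\mu} = 0$ when $\lambda/\mu$ is not a horizontal strip, so we may restrict attention to horizontal strips.

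I would then establish the spanning statement $S^\lambda = \sum_{\mu} \Lift^{\lambda/\mu}(\ker \Rand^\mu)$ by induction on $n$. The base case is trivial, and in the inductive step any element of $\ker\Rand^\lambda$ is recorded by the $\mu = \lambda$ summand. For an eigenvector $v \in S^\lambda$ with nonzero eigenvalue, it suffices to exhibit $v$ as lying in the image of some $\Lift_i^{\mu}$ with $\mu + \vec e_i = \lambda$; the inductive hypothesis applied to $S^\mu$ then expresses the preimage as a sum over $\Lift^{\mu/\nu}(\ker \Rand^\nu)$, and post-composing with $\Lift_i^\mu$ completes the recursion.

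The crux of the argument --- and the main obstacle --- is this surjectivity claim. I would approach it by introducing the adjoint $\Ex_i$ of $\Int_i$ (the operator that extracts an occurrence of the letter $a_i$ from a word) and computing a carefully weighted sum $\sum_i \alpha_i \,\Lift_i^{\lambda - \vec e_i} \circ \Ex_i$ acting on $S^\lambda$. The objective is an identity expressing this composition, up to a known scalar, as the restriction of $\Rand^\lambda$ to the orthogonal complement of $\ker\Rand^\lambda$; this would simultaneously give surjectivity onto the nonzero-eigenvalue part of $S^\lambda$ and yield injectivity of $\Lift^{\lambda/\mu}|_{\ker \Rand^\mu}$ as a byproduct. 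Pinning down the coefficients $\alpha_i$ from \autoref{eq:general-lifting} and tracking the interplay between $\Ex_i$, $\Repl_{j,k}$, and the projectors $\proj$ onto Specht submodules is the principal technical step.

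Finally, the direct-sum and eigenbasis assertions follow formally. Because $\Rand_n = T T^t$ is symmetric and positive semidefinite, eigenspaces for distinct eigenvalues are orthogonal, so summands with distinct values of $\eig(\lambda/\mu)$ are independent. For a fixed eigenvalue, a dimension count --- using $\dim S^\lambda = f^\lambda$ together with the bijection recalled in \autoref{equivalence-of-the-eigenvalue-descriptions} between standard tableaux of shape $\lambda$ and pairs consisting of a horizontal strip $\lambda/\mu$ and a desarrangement tableau of shape $\mu$ --- forces $\dim \ker \Rand^\mu = \ndestab^\mu$ and ensures that the subspaces $\Lift^{\lambda/\mu}(\ker \Rand^\mu)$ decompose $S^\lambda$ without overlap. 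The eigenbasis statement is then immediate by taking a union of bases.
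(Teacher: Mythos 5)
Your high-level architecture matches the paper's (verify the eigenvalue shift by iterating \autoref{thm:lifting-eigenvectors}, establish spanning, conclude directness and the basis claim by a dimension count using the bijection between standard tableaux of shape $\lambda$ and pairs consisting of a horizontal strip $\lambda/\mu$ and a desarrangement tableau of shape $\mu$). The eigenvalue calculation, the vanishing of $\Lift^{\lambda/\mu}$ on non-horizontal strips, and the final dimension count are all fine. However, you have correctly identified the crux — the spanning claim — and the approach you sketch for it is both unproven and likely unworkable in the form stated.

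Your proposed identity $\sum_i \alpha_i \,\Lift_i^{\lambda-\vec e_i}\circ \Ex_i \approx \Rand^\lambda|_{(\ker\Rand^\lambda)^\perp}$ has a type problem: $\Lift_i^{\lambda-\vec e_i}$ is only defined on $S^{\lambda-\vec e_i}$, but $\Ex_i(S^\lambda)$ need not land there. Indeed, $\Ex_i\colon M^\lambda\to M^{\lambda-\vec e_i}$ is adjoint to $\Int_i$, and $\Int_i$ carries other Specht constituents of $M^{\lambda-\vec e_i}$ into the $S^\lambda$-isotypic component of $M^\lambda$ whenever $\lambda-\vec e_i$ is obtained from them by adding a cell in a row $\le i$; so $\Ex_i(S^\lambda)$ has nontrivial components outside $S^{\lambda-\vec e_i}$. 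You would need to insert an explicit projection, after which the hoped-for scalar identity becomes hard to believe, since $\Rand^\lambda$ acts by distinct scalars on the various eigenspaces and there is no obvious reason a single coefficient vector $(\alpha_i)$ should reproduce all of them at once. The paper avoids all of this (\autoref{imageR2R}) by exploiting the factorization $\Rand_n = \TopR_n\circ\RTop_n$ and writing $\TopR_n = \sum_a \Int_a\circ\proj_a$, where $\proj_a$ extracts the \emph{last} letter (not an arbitrary occurrence); the key point is the elementary polytabloid computation that $\proj_a(S^\lambda)\subseteq S^{\lambda-\vec e_a}$. This gives the needed containment $\im(\Rand|_{S^\lambda})\subseteq\sum_a\im(\isoproj_\lambda\circ\Int_a|_{S^{\lambda-\vec e_a}})$ directly, with no coefficients to pin down.

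There is a second gap in your inductive step. When you post-compose the inductive decomposition of $S^\mu$ with $\Lift_i^\mu$, the resulting operator $\Lift_i^\mu\circ\Lift^{\mu/\nu}$ is a chain of the form $\isoproj\circ\Int_{r_k}\circ\isoproj\circ\cdots\circ\isoproj\circ\Int_{r_1}$ with intermediate projections, and for it to coincide with $\Lift^{\lambda/\nu}$ (where those projections are dropped and taken only at the end) one must have added the rows in weakly increasing order. This is precisely what \autoref{suffices-to-project-at-end} establishes, and it is not automatic: the row $i$ arising from $\im(\Rand|_{S^\lambda})\subseteq\sum_i\im(\Lift_i)$ may be smaller than rows already used for $\mu/\nu$. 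Without this lemma your recursion does not close.

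Finally, a small point: $\dim\ker\Rand^\mu = \ndestab^\mu$ is an \emph{input} to the dimension count, cited from \cite{RSW2014}, rather than something the count "forces."
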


\begin{proof}
    \autoref{sec:main-proofs} is dedicated to proving this result:
    this is \autoref{directsumdecomposition} combined with
    \autoref{suffices-to-project-at-end} and \autoref{thm:lifting-eigenvectors}.
\end{proof}

\autoref{fig:eigenspace-decomposition-311} and
\autoref{fig:eigenspace-decomposition-32} illustrate
\autoref{thm:eigenspace-decomposition}
by showing how the decomposition of $S^{(3,1,1)}$ and $S^{(3,2)}$ into eigenspaces for $\Rand_5$
is obtained from the kernels of $\Rand_m$ ($1 \leq m \leq n$) acting on
$S^\mu$, as $\mu$ ranges over all partitions such that $\lambda/\mu$ is
a horizontal strip.

\begin{Example}[Eigenbasis for $S^{(3,2)}$]
    \label{ex:eigenbasis-32}
    Let $\lambda = (3,2)$. We use \autoref{thm:eigenspace-decomposition} to
    construct an eigenbasis for $S^{\lambda}$ (\cf
    \autoref{fig:eigenspace-decomposition-32}.)

    \begin{figure}[b]
        \captionsetup{width=0.9\textwidth}
        \centering
        \includegraphics[width=\textwidth]{r2r-lifting-operator-graph-32.pdf}
        \caption{
            A depiction of \autoref{thm:eigenspace-decomposition},
            which describes the eigenspace decomposition for
            $\Rand_n$ acting on $S^\lambda$.
            Each cell represents an eigenspace and the number in a cell
            is the corresponding eigenvalue.
            Here, we see how the eigenspaces in $S^{(3,2)}$ arise as images of the
            kernels in $S^{(2,1)}$, $S^{(3,1)}$, $S^{(2,2)}$ and $S^{(3,2)}$
            (\cf \autoref{ex:eigenbasis-32}).
            Note that the restriction of $\Lift_2$ to the $10$-eigenspace of
            $S^{(3,1)}$ is $0$: the $10$-eigenspace is the image of the kernel
            in $S^{(1,1)}$ under $\Lift_1 \circ \Lift_1$; and the composition
            $\Lift_2 \circ \Lift_1 \circ \Lift_1 = \Lift^{(3,2)/(1,1)}$ is
            $0$ because $(3,2)/(1,1)$ is not a horizontal strip
            (\cf \autoref{image-of-composition-of-shuffles}).
        }
        \label{fig:eigenspace-decomposition-32}
    \end{figure}

    Since $S^\lambda$ is $5$-dimensional, we need $5$ linearly independent
    eigenvectors. We get these by
    applying $\Lift^{\lambda/\mu}$ to a
    basis $\mathcal K_\mu$ of $\ker(\Rand^{\mu})$
    for each of the following five horizontal strips $\lambda/\mu$.
    \begin{gather*}
        \begin{array}{cccccc}
        \tikztableausmall{{    X,    X,\null},{\null,\null}}
        &
        \tikztableausmall{{    X,    X,    X},{\null,\null}}
        &
        \tikztableausmall{{    X,    X,\null},{    X,\null}}
        &
        \tikztableausmall{{    X,    X,\null},{    X,    X}}
        &
        \tikztableausmall{{    X,    X,    X},{    X,\null}}
        &
        \tikztableausmall{{    X,    X,    X},{    X,    X}}
        \\
        \text{\small $\mu = (2)$}
        &
        \text{\small $\mu = (3)$}
        &
        \text{\small $\mu = (2,1)$}
        &
        \text{\small $\mu = (2,2)$}
        &
        \text{\small $\mu = (3,1)$}
        &
        \text{\small $\mu = (3,2)$}
        \end{array}
    \end{gather*}

    For $\mu = (2)$, we do not obtain any eigenvectors since $\ker(\Rand^{(2)}) = 0$
    (because there are no desarrangement tableaux of shape $(2)$).

    For $\mu = (3)$, we do not obtain any eigenvectors since $\ker(\Rand^{(3)}) = 0$
    (because there are no desarrangement tableaux of shape $(3)$).

    For $\mu = (2, 1)$, we obtain
    an eigenvector associated with the eigenvalue $11$:
    \begin{equation*}
        \begin{aligned}
            \mathcal K_{(2,1)}
            &= \left\{
                    aab - 2 aba + baa
                \right\}
             = \left\{
                \hspace{-0.4em}
                \begin{array}{c}
                    \w_{\!\!\!\tikztableauscript{{1,2},{3}}}
                    - 2 \w_{\!\!\!\tikztableauscript{{1,3},{2}}}
                \end{array}
                \hspace{-0.9em}
                \right\};
            \\
            \Lift^{(3,2)/(2,1)}\left(\mathcal K_{(2,1)}\right)
            &
            = \left(
                \left(\Int_2 - \frac{1}{2} \Repl_{1, 2} \circ \Int_1\right)
                \circ \Int_1
              \right)
              \left(\mathcal K_{(2,1)}\right)
            \\
            & = \left\{
                4 aaabb - 2 abaab - 2 ababa - 2 baaab - 2 baaba + 4 bbaaa
                \right\}
            \\
            &
            = \left\{
                \hspace{-0.4em}
                \begin{array}{c}
                    4 \w_{\!\!\!\tikztableauscript{{1,2,3},{4,5}}}
                    - 2 \w_{\!\!\!\tikztableauscript{{1,3,4},{2,5}}}
                    + 2 \w_{\!\!\!\tikztableauscript{{1,3,5},{2,4}}}
                \end{array}
                \hspace{-0.9em}
            \right\}.
        \end{aligned}
    \end{equation*}
    For $\mu = (2, 2)$, we obtain
    an eigenvector associated with the eigenvalue $7$:
    \begin{equation*}
        \begin{aligned}
            \mathcal K_{(2, 2)}
            &= \left\{
                    abab - abba - baab + baba
                \right\}
             = \left\{
                \hspace{-0.4em}
                \begin{array}{c}
                    \w_{\!\!\!\tikztableauscript{{1, 3}, {2, 4}}}
                \end{array}
                \hspace{-0.9em}
                \right\};
            \\
            \Lift^{(3,2)/(2, 2)}\left(\mathcal K_{(2, 2)}\right)
            & = \Int_1\left(\mathcal K_{(2, 2)}\right)
            \\
            & = \left\{
                \begin{array}{l}
                    2 aabab - 2 aabba + abaab + ababa \\
                    \qquad - 2 abbaa - 3 baaab + baaba + 2 babaa
                \end{array}
                \right\}
            \\
            &
            = \left\{
                \hspace{-0.4em}
                \begin{array}{c}
                    2 \w_{\!\!\!\tikztableauscript{{1, 2, 4}, {3, 5}}}
                    - 2 \w_{\!\!\!\tikztableauscript{{1, 2, 5}, {3, 4}}}
                    +   \w_{\!\!\!\tikztableauscript{{1, 3, 4}, {2, 5}}}
                    +   \w_{\!\!\!\tikztableauscript{{1, 3, 5}, {2, 4}}}
                \end{array}
                \hspace{-0.9em}
            \right\}.
        \end{aligned}
    \end{equation*}
    For $\mu = (3, 1)$, we obtain
    an eigenvector associated with the eigenvalue $5$:
    \begin{equation*}
        \begin{aligned}
            \mathcal K_{(3,1)}
            &= \left\{
                    aaab - 3 aaba + 3 abaa - baaa
                \right\}
                \\
            &= \left\{
                \hspace{-0.4em}
                \begin{array}{c}
                        \w_{\!\!\!\tikztableauscript{{1, 2, 3}, {4}}}
                    - 3 \w_{\!\!\!\tikztableauscript{{1, 2, 4}, {3}}}
                    + 3 \w_{\!\!\!\tikztableauscript{{1, 3, 4}, {2}}}
                \end{array}
                \hspace{-0.9em}
                \right\};
            \\
            \Lift^{(3,2)/(3,1)}\left(\mathcal K_{(3,1)}\right)
            & =
            \left(\Int_2 - \frac{1}{3} \Repl_{1, 2} \circ \Int_1\right)
            \left(\mathcal K_{(3,1)}\right)
            \\
            & = \left\{
                    \displaystyle \frac{10}{3}
                    \left(
                        aaabb - aabab - aabba + abbaa + babaa - bbaaa
                    \right)
                \right\}
            \\
            &
            = \left\{
                \frac{10}{3}
                \left(
                    \hspace{-0.4em}
                    \begin{array}{c}
                            \w_{\!\!\!\tikztableauscript{{1, 2, 3}, {4, 5}}}
                        - \w_{\!\!\!\tikztableauscript{{1, 2, 4}, {3, 5}}}
                        - \w_{\!\!\!\tikztableauscript{{1, 2, 5}, {3, 4}}}
                        + \w_{\!\!\!\tikztableauscript{{1, 3, 5}, {2, 4}}}
                    \end{array}
                    \hspace{-0.9em}
                \right)
            \right\}.
        \end{aligned}
    \end{equation*}
    For $\mu = (3, 2)$, we have
    two eigenvectors in the kernel:
    \begin{equation*}
        \begin{aligned}
          \Lift^{(3,2)/(3,2)} \left(\mathcal K_{(3,2)}\right)
        &= \mathcal K_{(3,2)}
        \\
        &= \left\{
            \begin{array}{c}
                aabab - aabba - abaab + abbaa + baaba - babaa, \\
                abaab - 2 ababa + abbaa - baaab + 2 baaba - babaa
            \end{array}
            \right\}
            \\
        &= \left\{
            \hspace{-0.4em}
            \begin{array}{l}
                  \w_{\!\!\!\tikztableauscript{{1, 2, 4}, {3, 5}}}
                - \w_{\!\!\!\tikztableauscript{{1, 2, 5}, {3, 4}}}
                - \w_{\!\!\!\tikztableauscript{{1, 3, 4}, {2, 5}}}
                \\
                    \w_{\!\!\!\tikztableauscript{{1, 3, 4}, {2, 5}}}
                - 2 \w_{\!\!\!\tikztableauscript{{1, 3, 5}, {2, 4}}}
            \end{array}
            \hspace{-0.9em}
            \right\}.
    \end{aligned}
    \end{equation*}
\end{Example}

\subsection{Frobenius characteristics of the eigenspaces}
\label{ssec:frobenius-characteristics}

We briefly restrict our attention to the set of words of evaluation
$\nu = (1, 1, \dots, 1)$, which we abbreviate as $\nu = 1^n$.
Since these words do not contain any repeated letters,
we can think of them as permutations.
This allows us to define an action of a permutation $\sigma$ on a word $w$ of
evaluation $1^n$ as their composition $\sigma \circ w$ as
permutations.

Note that $M^{1^n}$ is isomorphic to the two-sided regular
representation of $\symm_n$ (\ie, the group algebra of $\symm_n$).
Indeed, the left $\symm_n$--action defined above is the composition of
permutations, and the right $\symm_n$--action, which corresponds to
permutations acting on the positions of a word, is also the composition of
permutations if the word has evaluation $1^n$.

The eigenspaces of the random-to-random operator acting on $M^{1^n}$
are invariant subspaces for this (left) action of the symmetric group.
Indeed, if $\sigma \in S_n$ and $v \in M^{1^n}$ is an eigenvector of $\Rand_n$
with eigenvalue $\varepsilon$, then $\sigma v$ is also an eigenvector with the
same eigenvalue, because
\begin{equation*}
    (\sigma v) \cdot \Rand_n
    =
    \sigma (v \cdot \Rand_n)
    =
    \varepsilon (\sigma v).
\end{equation*}
Hence, one can ask for the irreducible decompositions of these left
$\symm_n$--modules, or equivalently, their Frobenius characteristics.

We will obtain these decompositions by starting with a known decomposition of
$M^{1^n}$ into a direct sum of irreducible two-sided $\symm_n$--modules.
The irreducible two-sided $\symm_n$--modules are all of the form
$(S^\mu)^\ast \otimes S^\lambda$,
where $(S^\mu)^\ast$ is isomorphic to the left $\symm_n$--module indexed by
$\mu \vdash n$.
Note that this expression in terms of tensor products neatly separates the
left and right actions:
the left action of $\symm_n$ on $(S^\mu)^\ast \otimes S^\lambda$
corresponds to acting by $\symm_n$ on the tensor factor $(S^\mu)^\ast$,
and the right action corresponds to acting on the right tensor factor
$S^\lambda$.
The only modules that occur in $M^{1^n}$ are of the form
$(S^\lambda)^\ast \otimes S^\lambda$, and we have
\begin{equation}
    \label{reg-rep-decomp}
    M^{1^n} \cong
    \bigoplus_{\lambda \vdash n} \left(S^\lambda\right)^\ast \otimes S^\lambda.
\end{equation}

\begin{Lemma}
    \label{lemma:kernel-dimensions}
    For every partition $\lambda \vdash n$,
    the nullity of $\Rand_n$ acting on $S^\lambda$ is
    \begin{equation*}
        \dim \ker(\Rand^\lambda) = \ndestab^\lambda,
    \end{equation*}
    where $\ndestab^\lambda$ is the number of desarrangement tableaux of shape
    $\lambda$.
\end{Lemma}

\begin{proof}


    Since the right action of $\symm_n$ on $M^{1^n}$ corresponds to acting on
    the right tensor factors $S^\lambda$ in \autoref{reg-rep-decomp},
    it follows that
    \begin{equation*}
        \ker(\Rand_n)
        \cong
        \bigoplus_{\lambda \vdash n}
        \left(S^\lambda\right)^\ast \otimes \ker(\Rand^\lambda).
    \end{equation*}
    Thus,
    $\dim\ker(\Rand^\lambda)$
    is equal to the multiplicity of
    $\left(S^\lambda\right)^\ast$ in $\ker(\Rand_n)$.

    Since the random-to-random shuffle is the symmetrization of the
    random-to-top shuffle, we have that $\ker(\Rand_n) = \ker(\RTop_n)$.
    The irreducible decomposition of $\ker(\RTop_n)$ as a left
    $\symm_n$--module was computed in \cite{ReinerWachs2002},
    \cite[Proposition~2.3]{ReinerWebb2004} and
    \cite[Proposition~VI.9.5]{RSW2014}
    (\cf \autoref{frob-char-for-r2t}):
    \begin{equation*}
        \ker(\RTop_n) \cong
        \bigoplus_{\substack{\text{desarrangement} \\ \text{tableaux $Q$ of size $n$}}}
        \left(S^{\shape(Q)}\right)^\ast.
    \end{equation*}
    Thus, the multiplicity of the irreducible $\symm_n$--module
    $\left(S^\lambda\right)^\ast$ in $\ker(\RTop_n)$ is $\ndestab^\lambda$,
    the number of desarrangement tableaux of shape $\lambda$.
\end{proof}

\begin{Theorem}
    \label{thm:frobenius-characteristic-R2R}
    The eigenspaces of the random-to-random shuffle acting on permutations of
    size $n$ are left modules for the symmetric group $\symm_n$.
    The Frobenius characteristic of the $\frac{1}{n^2} \varepsilon$-eigenspace is
    \begin{equation}
        \label{eq:frobenius-characteristic-R2R}
        \smashoperator{\sum_{
            \substack{
                \text{horizontal strips $\lambda/\mu$} \\
                \text{with } \eig(\lambda/\mu) = \varepsilon
            }}}
        \ \ndestab^\mu \, s_\lambda,
    \end{equation}
    where $s_\lambda$ is the Schur function indexed by $\lambda$
    and $\ndestab^\mu$ denotes the number of desarrangement tableaux of shape $\mu$.
\end{Theorem}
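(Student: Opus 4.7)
The plan is to exploit that $\Rand_n$ commutes with the left $\symm_n$-action on $M^{(1^n)}$, decompose the regular representation via the $(\symm_n,\symm_n)$-bimodule structure, and then invoke \autoref{thm:eigenspace-decomposition} to count multiplicities of each irreducible. Under the letter-permutation action, $M^{(1^n)}$ is the left regular representation of $\symm_n$, and $\Rand_n$ coincides with right multiplication by $R_n = \sum_{p_1,p_2=1}^{n} \tau_{p_1,p_2} \in \mathbb{C}[\symm_n]$, where $\tau_{p_1,p_2}$ is the cycle encoding ``remove position $p_1$, insert at position $p_2$''. Since right multiplication commutes with left multiplication in a group algebra, $\Rand_n$ is $\symm_n$-equivariant and every eigenspace is a left $\symm_n$-submodule.

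Next I would use the bimodule decomposition $\mathbb{C}[\symm_n] \cong \bigoplus_{\lambda \vdash n} S^\lambda \otimes S^\lambda$ (with $\symm_n$ acting on the left on the first factor and on the right on the second), under which right multiplication by $R_n$ acts as $\mathrm{Id} \otimes A_\lambda$ on each isotypic summand, for some $A_\lambda \in \mathrm{End}(S^\lambda)$. Letting $E^\lambda_\varepsilon$ denote the $\varepsilon$-eigenspace of $A_\lambda$, the $\varepsilon$-eigenspace of $\Rand_n$ equals $\bigoplus_\lambda S^\lambda \otimes E^\lambda_\varepsilon$, which as a left $\symm_n$-module is isomorphic to $\bigoplus_\lambda (S^\lambda)^{\oplus \dim E^\lambda_\varepsilon}$. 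Its Frobenius characteristic is therefore $\sum_\lambda (\dim E^\lambda_\varepsilon)\, s_\lambda$, and the problem reduces to computing $\dim E^\lambda_\varepsilon$.

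To identify $\dim E^\lambda_\varepsilon$, I would compare $A_\lambda$ on $S^\lambda$ with $\Rand_n$ on the concrete Specht submodule $S^\lambda \subseteq M^\lambda$ via the natural embedding $\iota \colon M^\lambda \hookrightarrow M^{(1^n)}$ sending a word $w$ of evaluation $\lambda$ to $\sum_{\sigma \mapsto w} \sigma$. One checks that $\iota$ is $\Rand_n$-equivariant, since the position-based cycle operations defining $\Rand_n$ are compatible with summation over right $\symm_\lambda$-cosets. The image $\iota(S^\lambda)$ is a right-$\symm_n$-submodule of $\mathbb{C}[\symm_n]$ isomorphic to $S^\lambda$ as a right module, so under the bimodule decomposition it sits inside the $\lambda$-isotypic summand as $\mathbb{C} v_0 \otimes S^\lambda$ for some fixed nonzero $v_0 \in S^\lambda$. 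Restricting right multiplication by $R_n$ to this submodule recovers $A_\lambda$, so the eigenvalue multisets of $A_\lambda$ on $S^\lambda$ and of $\Rand_n$ on $S^\lambda \subseteq M^\lambda$ coincide. By \autoref{thm:eigenspace-decomposition}, $\dim E^\lambda_\varepsilon$ thus equals $\sum_\mu \ndestab^\mu$ summed over $\mu$ with $\lambda/\mu$ a horizontal strip and $\eig(\lambda/\mu) = \varepsilon$, yielding the claimed identity.

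The main obstacle will be the careful verification of the $\Rand_n$-equivariance of $\iota$ together with the fact that $\iota(S^\lambda)$ sits as a single right-$\symm_n$-submodule in one fixed copy of the $S^\lambda \otimes S^\lambda$ summand of the bimodule (rather than spreading across several isotypic components). Once these structural points are in hand, the Frobenius characteristic formula follows directly by combining the bimodule reasoning with \autoref{thm:eigenspace-decomposition}.
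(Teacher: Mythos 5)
Your proof is correct and follows essentially the same route as the paper: decompose $M^{(1^n)}\cong\mathbb{C}[\symm_n]$ as a $(\symm_n,\symm_n)$-bimodule into $\bigoplus_\lambda S^\lambda\otimes S^\lambda$, observe that $\Rand_n$ (right multiplication by an element of the group algebra) acts only on the right tensor factor, and then feed in \autoref{thm:eigenspace-decomposition} to count multiplicities of each $s_\lambda$. The one place where you add something is your explicit equivariant embedding $\iota\colon M^\lambda\hookrightarrow M^{(1^n)}$, which makes rigorous the identification of the abstract operator $A_\lambda$ (right multiplication restricted to a single copy of $S^\lambda$ inside the regular representation) with the concrete $\Rand_n$ acting on the Specht submodule $S^\lambda\subseteq M^\lambda$ to which \autoref{thm:eigenspace-decomposition} applies. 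The paper collapses this step into a single sentence ("we can decompose the right tensor factors as vector spaces using \autoref{thm:eigenspace-decomposition}"), implicitly relying on the general Fourier-transform reduction set up in \autoref{reduction-to-specht-modules}; your version spells out why the eigenvalue multisets transfer. Both arguments are sound, and yours is, if anything, slightly more self-contained on this point.
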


\begin{proof}
    Since we are interested in the left module structure only,
    we decompose the right tensor factors in \autoref{reg-rep-decomp}
    as vector spaces using
    \autoref{thm:eigenspace-decomposition}.
    Hence, we have the following isomorphism as left $\symm_n$--modules:
    \begin{gather*}
        \left(S^\lambda\right)^\ast \otimes S^\lambda
        \cong \bigoplus_{
                \text{horizontal strips~} \lambda/\mu
            }
            \left(S^\lambda\right)^\ast
                \otimes \Lift^{\lambda/\mu}\left(\ker \Rand^\mu\right).
    \end{gather*}
    By lumping together the horizontal strips $\lambda/\mu$ with the same value
    of $\eig(\lambda/\mu)$, the
    $\frac{1}{n^2}\varepsilon$--eigenspace is equal to the following direct sum
    of left $\symm_n$--modules:
    \begin{gather*}
        \bigoplus_{
          \substack{
              \lambda \vdash n \text{~and} \\
              \text{horizontal strips } \lambda/\mu \\
              \text{with~} \eig(\lambda/\mu) = \varepsilon
          }}
          \left(S^\lambda\right)^\ast \otimes
                \Lift^{\lambda/\mu}\left(\ker \Rand^\mu\right).
    \end{gather*}
    The above contains one copy of $\left(S^\lambda\right)^\ast$ for each
    vector in a basis of $\Lift^{\lambda/\mu}\left(\ker \Rand^\mu\right)$,
    so its Frobenius characteristic is,
    by \autoref{lemma:kernel-dimensions},
    \begin{gather*}
        \sum_{
          \substack{
              \lambda \vdash n \text{~and} \\
              \text{horizontal strips~} \lambda/\mu \\
              \text{with~} \eig(\lambda/\mu) = \varepsilon
          }}
            s_\lambda
            \times
            \dim\left(\ker \Rand^\mu\right)
        =
        \sum_{
          \substack{
              \lambda \vdash n \text{~and} \\
              \text{horizontal strips~} \lambda/\mu \\
              \text{with~} \eig(\lambda/\mu) = \varepsilon
          }}
          s_\lambda
          \times
          \ndestab^\mu.
          \qedhere
    \end{gather*}
\end{proof}

\begin{Example}
    With the help of {\tt Sage} \cite{Sage},
    the Frobenius characteristics of all the eigenspaces for $n \leq 8$ were
    explicitly computed and appear in
    \cite[Figures 4--6 of Chapter VI; Figures 1--6 of Appendix A]{RSW2014}.
    For instance, from this data, one has that the Frobenius characteristic of the
    $\frac{9}{36}$-eigenspace for the random-to-random operator acting on
    permutations of size $6$ is
    \begin{equation*}
        s_{(3,2,1)} + 2s_{(4,1,1)} + 2s_{(4,2)},
    \end{equation*}
    which corresponds to the following horizontal strips $\lambda/\mu$
    for which $\eig(\lambda/\mu) = 9$.
    \begin{equation*}
        \tikztableauscript{{X,X,X},{X,\null},{\null},}
        \quad
        \tikztableauscript{{X,X,X,\null},{X},{X},}
        \quad
        \tikztableauscript{{X,X,X,\null},{X,X},}
    \end{equation*}
\end{Example}

\begin{Remark}[Frobenius characteristic for random-to-top eigenspaces]
    \label{frob-char-for-r2t}
    Since the random-to-random shuffle is the symmetrization of the
    random-to-top shuffle, the kernels of the associated transition
    matrices coincide.
    The Frobenius characteristic of this eigenspace
    appears in \cite[Proposition~2.3]{ReinerWebb2004}; it is
    \begin{equation*}
        \smashoperator{\sum_{\substack{\text{desarrangement}\\\text{tableaux $T$ of size $n$}}}} \ s_{\shape(T)}
        \quad = \quad
        \sum_{\lambda \vdash n} \ \ndestab^\lambda \, s_\lambda.
    \end{equation*}

    Futhermore, Reiner and Wachs in unpublished work \cite{ReinerWachs2002}
    computed the Frobenius
    characteristic of \emph{all} the eigenspaces for the random-to-top shuffle:
    the Frobenius characteristic of the $\frac{j}{n}$-eigenspace is obtained by
    summing \autoref{eq:frobenius-characteristic-R2R} over partitions $\mu$ of size
    $j$:
    \begin{equation*}
        \smashoperator{\sum_{
            \substack{
                \text{horizontal strips} \\
                \lambda/\mu \text{ with } |\mu| = j
            }}} \ \ndestab^\mu s_\lambda.
    \end{equation*}
    A proof appears in \cite[Theorem~9.5]{RSW2014}.
\end{Remark}

\section{Proofs of \autoref{thm:lifting-eigenvectors} and \autoref{thm:eigenspace-decomposition}}
\label{sec:main-proofs}

The goal of this section is to prove \autoref{thm:lifting-eigenvectors} on lifting
eigenvectors of $\Rand_n$ to eigenvectors of $\Rand_{n+1}$; as well as
\autoref{thm:eigenspace-decomposition} on the decomposition of the Specht module
$S^\lambda$ into eigenspaces for $\Rand_n$.

Although the theory that we develop is valid for any ordered alphabet $A$, for
simplicity of notation and without loss of generality we will suppose that the
alphabet is $[n] = \{1, 2, 3, \dots,n \}$. However, in the examples we will
continue to use the usual lexicographically ordered alphabet $A = \{a, b, c,
\dots\}$ in order to distinguish words from permutations and coefficients.

\subsection{Outline of the proofs}
\label{outline-of-the-proofs}

We begin with a detailed outline of the proofs. The argument is divided into
several parts with each part in its own subsection.

\begin{enumerate}[wide, itemsep=0.5em]
    \item[\autoref{modules-of-words-and-reduction-to-specht-submodules}.]
        We begin by establishing the setting for the argument.
        We work in the algebra of words and interpret the shuffling
        operators as linear endomorphisms of this space.
        More specifically,
        we identify the random-to-random operator with an element of the group
        algebra of $\symm_n$ acting on the vector space $\MM[n]$ whose basis is
        the set of words of length $n$.

        We then reduce the problem of determining the spectrum and eigenspaces of
        the random-to-random shuffle acting on $\MM[n]$ to determining these on
        indecomposable modules for the symmetric group $\symm_n$.
        This reduction technique,
        sometimes called the \emph{Fourier transform},
        is constructive: there are explicit decompositions of $\MM[n]$ into
        indecomposable $\symm_n$--modules.
        Consequently, it suffices to determine the spectrum and eigenspaces of
        the random-to-random operator on all the Specht modules, since these
        are all the indecomposable $\symm_n$--modules up to isomorphism.

    \item[\autoref{an-identity-in-the-algebra-of-words}.]
        Next, we prove the following identity relating the random-to-random
        operators $\Rand_n$, the shuffling operators $\Int_a$, and the
        replacement operators $\Repl_{b,a}$:
        \begin{align*}
            \Rand_{n+1} \circ \Int_a - \Int_a \circ \Rand_n
            = (n+1) \Int_a + \sum_{1 \leq b \leq n} \Int_b \circ \Repl_{b,a}.
        \end{align*}
        These are linear transformations from $\MM[n]$ to $\MM[n+1]$.
        See \autoref{BracketRandInt}.

    \item[\autoref{action-of-the-symmetric-group}.]
        We restrict the above identity to the Specht submodules $S^\lambda$.
        The operators $\Repl_{b,a}$ are morphisms of $\symm_n$--modules
        that vanish on $S^\lambda$ for $b > a$ (\autoref{vanishingRab}),
        leading to the following identity
        (\autoref{SpechtRestritionOfBracketRandInt}):
        \begin{align*}
            \left(\Rand_{n+1} \circ \Int_a
                    - \Int_a \circ \Rand_n\right)\Big|_{S^\lambda}
            = (n+1) \Int_a\Big|_{S^\lambda} + \sum_{1 \leq b \leq a}
                \left(\Int_b \circ \Repl_{b,a}\right)\Big|_{S^\lambda}.
        \end{align*}

    \item[\autoref{projection-to-specht-module}.]
        The linear transformation above maps $S^\lambda$ to $M^{\lambda + \vec e_a}$.
        In \autoref{liftingeigenvectors2},
        we compose this with the projection $\isoproj_{\lambda + \vec e_r}$ to
        the submodule of $M^{\lambda + \vec e_a}$ that is isomorphic to
        a direct sum of copies of $S^{\lambda + \vec e_r}$. We will
        see that $\Rand_{n+1}$ commutes with $\isoproj{\lambda + \vec e_r}$
        (\cf the discussion preceding \autoref{eqn1}), and so we obtain
        \begin{align*}
            &
            \Rand_{n+1} \circ
                \left(\isoproj_{\lambda + \vec e_r} \circ \Int_a\right)\Big|_{S^\lambda}
                - \left(\isoproj_{\lambda + \vec e_r} \circ \Int_a\right) \circ \Rand_n \Big|_{S^\lambda}
            \\ & \hspace{1.75in}
            =
            \left((n+1) + (\lambda_r + 1) - r\right) \,
            \left(\isoproj_{\lambda + \vec e_r} \circ \Int_a\right) \Big|_{S^\lambda}.
        \end{align*}
        In particular, this proves that if $v \in S^\lambda$ is an eigenvector
        of $\Rand_n$ with eigenvalue $\varepsilon$, then
        $\isoproj_{\lambda + \vec e_r}(\Int_a(v))$ is either $0$
        or it is an eigenvector of $\Rand_{n+1}$ with eigenvalue
        $\varepsilon + (n+1) + (\lambda_r + 1) - r$.
        This proves \autoref{thm:lifting-eigenvectors}\autoref{thm:lifting-eigenvectors-part-a}.

    \item[\autoref{reformulation-of-the-projection-morphism}.]
        In \autoref{reformulation-of-morphisms} we prove that
        $\isoproj_{\lambda + \vec e_a} \circ \Int_a$ coincides
        with the linear transformations $\Lift_a^\lambda$ defined in
        \autoref{thm:lifting-eigenvectors}\autoref{thm:lifting-eigenvectors-part-b}:
        \begin{equation*}
            \Lift_a^\lambda =
            \sum_{1 \leq b_1 < \cdots < b_t < b_{t+1} = a}
            \left(
                \prod_{j=1}^{t}
                    \frac{1}{(\lambda_a - a) - (\lambda_{b_j} - {b_j})} \Repl_{b_j, b_{j+1}}
            \right)
            \Int_{b_1}.
        \end{equation*}
        This finishes the proof of \autoref{thm:lifting-eigenvectors}.

    \item[\autoref{image-of-r2r}.]
        Next, we prove that every eigenvector of
        $\Rand|_{S^{\lambda}}$, except those that lie in the kernel,
        is a linear combination of vectors in the image of some
        $\Rand|_{S^{\lambda - \vec e_a}}$
        under $\isoproj_{\lambda} \circ \Int_a$
        (\autoref{imageR2R}).

    \item[\autoref{construction-of-eigenspaces-from-kernels}.]
        Repeated application of the previous result implies that the
        eigenvectors of $\Rand|_{S^\lambda}$ are linear combinations of
        elements belonging to subspaces of the form
        \begin{align*}
            \Big(\isoproj_{\nu^{(s)}} \circ \Int_{b_s}\Big)
            \circ
            \cdots
            \circ
            \Big(\isoproj_{\nu^{(2)}} \circ \Int_{b_2}\Big)
            \circ
            \Big(\isoproj_{\nu^{(1)}} \circ \Int_{b_1}\Big)
            \left(\ker \Rand|_{S^\mu}\right).
        \end{align*}
        We study the above composition of maps
        (\autoref{suffices-to-project-at-end}) and show that $S^\lambda$ is the
        direct sum of these subspaces, one for each horizontal strip
        $\lambda/\mu$ (\autoref{directsumdecomposition}).
        This proves \autoref{thm:eigenspace-decomposition}.
\end{enumerate}

\subsection{Fourier transform reduction}
\label{modules-of-words-and-reduction-to-specht-submodules}

It turns out that in order to understand the spectrum of the random-to-random
shuffle, it suffices to understand the behaviour on certain subspaces.
This is sometimes called the \emph{Fourier transform reduction}.

In this section, we establish the setting for our argument.
We will work in the algebra of words and interpret the shuffling
operators as certain linear operators acting on this space.
We then explain why it suffices to reduce the study of these
operators to the study of their action on the Specht submodules.

\subsubsection{Algebra of words}
\label{sec:algebra-of-words}
Let $A$ be an alphabet; that is, $A = \{a_1, a_2, a_3, \dots\}$ is an ordered
finite set and its elements are called \emph{letters}.
View the letters of the alphabet $A$ as \emph{non-commuting} indeterminates and
let $\CC\langle A \rangle$ denote the algebra of polynomials in the variables
$A$ with coefficients in $\CC$.
Hence, $\CC\langle A \rangle$ is the $\CC$--vector space with basis the set of
finite words over $A$.
The sum and product of two polynomials is computed in the usual way, with the
product of two words $u$ and $v$ defined as the concatenation $uv$.
We call $\CC\langle A \rangle$ the \emph{algebra of words over $A$} (or
\emph{free associative algebra generated by $A$}).

There is a second operation on $\CC \langle A \rangle$ that we will use
frequently. It is called the \emph{shuffle product}.
Informally, it is the sum of all the ways of interleaving two words.
It can be defined recursively as follows: for words $w$ and $u$, and letters
$a$ and $b$,
\begin{equation*}
    w a \shuffle u b = (w \shuffle u b)a + (w a \shuffle u)b,
\end{equation*}
where the shuffle product of $w$ with the empty word is $w$.
This operation is commutative and associative.
Since $\Int_a(w)$ is the sum of all words obtained from $w$ by inserting $a$,
it follows that
\begin{equation*}
    \Int_a(w) = w \shuffle a.
\end{equation*}

\subsubsection{Shuffling operators as elements of the group algebra of the
    symmetric group}
\label{sec:operators-in-group-algebra}

The next observation is a straightforward re-interpretation of the shuffling
operators $\TopR_n$, $\RTop_n$ and $\Rand_n$ in terms of the action of
permutations on words. Specifically, it identifies these operators with linear
transformations associated with certain elements of the group algebra of the
symmetric group $\symm_n$.

Let $\MM[n]$ denote the subspace of $\CC\langle A \rangle$ spanned by words of
length $n$.
The symmetric group $\symm_n$ of degree $n$ acts on $\MM$ by permuting the
positions of the letters of a word. Explicitly, for a word $w_1 w_2 \cdots w_n$
and a permutation $\sigma \in \symm_n$,
\begin{gather*}
    w_1 w_2 \cdots w_n \cdot \sigma
    = w_{\sigma(1)} w_{\sigma(2)} \cdots w_{\sigma(n)}.
\end{gather*}

\begin{Example}
    \label{ex:permutation-action-on-words}
    If $w = cabcbaaa$ and $\sigma = 71842563, \tau = 26347158 \in \symm_8$,
    then
    \begin{align*}
        w \cdot \sigma = w_7 w_1 w_8 w_4 w_2 w_5 w_6 w_3
                       &= a   c   a   c   a   b   a   b
        \\
        (w \cdot \sigma) \cdot \tau = 
                                       a   c   a   c   a   b   a   b
                                 \cdot 2   6   3   4   7   1   5   8
                                   &=  c   b   a   c   a   a   a   b
        \\
        w \cdot (\sigma \tau) =
              c   a   b   c   b   a   a   a
        \cdot 1   5   8   4   6   7   2   3
          &=  c   b   a   c   a   a   a   b
    \end{align*}
    In particular, this example exemplifies that this is indeed a \emph{right}
    $\symm_n$--action.
\end{Example}

\begin{Definition}
    \label{random-to-random-in-group-algebra}
    Let $w$ be a word of length $n$.
    \begin{enumerate}
        \item
            $\RTop_n(w) = w \cdot \rho_n$, where
            $\displaystyle \rho_n = \mathid + \sum_{1\leq i < n} (i, i+1, \ldots, n)$.

        \item
            $\TopR_n(w) = w \cdot \tau_n$, where
            $\displaystyle \tau_n = \mathid + \sum_{1\leq i < n} (n, n-1, \ldots, i)$.

        \item
            $\Rand_n(w) = w \cdot \xi_n$, where
            \begin{align*}
                \xi_n
                    = \rho_n \tau_n
                    &=
                    n \cdot \mathid
                    \quad + \sum_{1\leq u < v \leq n} (u, u+1, \ldots, v-1, v) \\
                    & \hspace{1in} + \sum_{1 \leq v < u \leq n} (u, u-1, \ldots, v+1, v).
            \end{align*}
    \end{enumerate}
\end{Definition}

\subsubsection{Reduction to Specht modules}
\label{reduction-to-specht-modules}

Since $\TopR_n$, $\RTop_n$ and $\Rand_n$ are given by the action of an element
of the group algebra, any submodule of an $\symm_n$--module is stable for the
action of these operators.
Concretely, this means that understanding the action of these operators on
a $\symm_n$--module reduces to understanding their action on certain
submodules.

More precisely, suppose that an $\symm_n$--module $M$ decomposes into a direct
sum of submodules $M = N_1 \oplus \cdots \oplus N_r$.
Furthermore, suppose that each $N_i$ is \emph{indecomposable}:
that is, $N_i$ cannot be expressed as a direct sum of non-trivial submodules.

Let $\xi$ be an element of the group algebra of $\symm_n$.
Since each submodule $N_i$ is stable for the action of $\xi$, picking a basis
of $N_1$, a basis of $N_2$, and so on, gives a basis of $M$ in which the
matrices expressing the action of $\xi$ are all block diagonal: the blocks on
the diagonal are the matrices expressing the action of $\xi$ on the submodules
$N_i$.

Accordingly, the spectrum and eigenspaces of $\xi$ acting on $M$ are
completely determined by the spectra and eigenspaces of $\xi$ acting on the
indecomposable modules for $\symm_n$. These are the ``Specht modules''.
(The definition of the Specht modules together with an explicit decomposition
of the modules $\MM$ will be given in \autoref{action-of-the-symmetric-group}.)

\subsection{An identity in the algebra of words}
\label{an-identity-in-the-algebra-of-words}

The goal of this section is to prove the following identity that relates the
random-to-random operators $\Rand$, the shuffling operators $\Int$, and the
replacement operators $\Repl$:
\begin{align}
    \label{eq:main-identity}
    \Rand_{n+1} \circ \Int_a - \Int_a \circ \Rand_n
    = (n+1) \Int_a + \sum_{1 \leq b \leq n} \Int_b \circ \Repl_{b,a}.
\end{align}
This is an identity of linear transformations from the subspace spanned by
words of length $n$ to the subspace spanned by words of length $n+1$.

Let $\MM[n]$ denote the subspace of $\ZZ\langle A \rangle$ spanned by words of
length $n$.
Recall that $\Int_a^n: \MM \to \MM[n+1]$ denotes the linear operator that maps
a word to the sum of all words obtained by inserting the letter $a$;
explicitly,
\begin{gather*}
    \Int_a^{n}(w_1\cdots w_n)
    = \sum_{0\leq j \leq n} w_1 \cdots w_{j} \cdot \mathred{a} \cdot w_{j+1} \cdots w_n.
\end{gather*}
Let $\Ex_a^n : \MM \to \MM[n-1]$ denote the linear operator that maps a word to
the sum of all words obtained by removing exactly one occurrence of the letter
$a$:
\begin{align*}
    \Ex_a^{n}(w_1 \cdots w_n)
    = \sum_{\substack{1 \leq j \leq n \\[.5ex] \mathred{w_j = a}}} w_1 \cdots w_{j-1} w_{j+1} \cdots w_n.
\end{align*}
To simplify notation, we write $\Int_a$ and $\Ex_a$ instead of $\Int_a^n$ and
$\Ex_a^n$ when $n$ is determined by the context.
\begin{Example}
    \begin{align*}
        \Int_a(aaba) &= 3 \, aaaba + 2 \, aabaa, \\
        \Ex_a(aaba) &= 2 \, aba + aab, \\
        \Ex_b(aaba) &=      aaa. \qedhere
    \end{align*}
\end{Example}

The next result shows that top-to-random, random-to-top and random-to-random
can be described in terms of $\Int$ and $\Ex$.

\begin{Proposition}
    \label{r2r=t2r.r2t}
    \
    \begin{enumerate}
        \item
            \label{item1}
            For any word $w = w_1 \cdots w_n$ of length $n$,
            \begin{align*}
                \TopR_n(w) &= \Int_{w_n}^{n-1}(w_1 \cdots w_{n-1}).
            \end{align*}

        \item
            \label{item2}
            For any word $w$ of length $n$,
            \begin{align*}
                \RTop_n(w) &= \sum_{a \in A} \Ex_a^{n}(w) \cdot a,
            \end{align*}
            where $\cdot$ denotes concatenation of words.

        \item
            \label{item3}
            The random-to-random operator is the composition of the
            random-to-top operator with the top-to-random operator:
            \begin{gather*}
                \Rand_n = \sum_{a \in A} \Int_a^{n-1} \circ \Ex_a^{n} = \TopR_n \circ \RTop_n.
            \end{gather*}
    \end{enumerate}
\end{Proposition}

\begin{proof}
    The shuffling process underlying the $\TopR_n$ operator pops the last
    letter of $w$ and inserts it into the word $w_1 \cdots w_{n-1}$.
    This proves \eqref{item1}, and \eqref{item2} is proved similarly.

    The shuffling process underlying the $\Rand_n$ operator removes a letter
    $i$ from a word $w$, which coincides with $\Ex_i^{n}$, and then inserts
    $i$ into the word, which coincides with $\Int_i^{n-1}$. This proves the
    first equality of \eqref{item3}. For the second equality, we compute the composition:
    \begin{gather*}
        \left( \TopR_n \circ \RTop_n \right) (w)
        = \sum_{a \in A} \TopR_n \left( \Ex_a^{n}(w) \cdot a \right)
        = \sum_{a \in A} (\Int_a^{n-1} \circ \Ex_a^{n})(w)
        = \Rand_n(w). \qedhere
    \end{gather*}
\end{proof}

In light of \autoref{r2r=t2r.r2t},
it is opportune to understand the relationships between $\Int_i$ and $\Ex_j$.
It is straightforward to verify that $\Int_i$ and $\Ex_i$ are adjoint
operators with respect to the pairing $\langle \cdot, \cdot \rangle$ on $\MM$
that makes the basis of words an orthonormal basis; explicitly,
for $w$ a word of length $n$ and $u$ a word of length $n+1$,
\begin{gather}
    \label{Int-and-Ex-are-adjoint}
    \langle \Int_i^n(w), u \rangle = \langle w, \Ex_i^{n+1}(u) \rangle.
\end{gather}

Although these operators commute amongst themselves, that is,
\begin{align}
    \label{lemma:IntercalationsCommute}
    \Int_a^{n+1} \circ \Int_b^{n} &= \Int_b^{n+1} \circ \Int_a^{n} \\
    \Ex_b^{n} \circ \Ex_a^{n+1} &= \Ex_a^{n} \circ \Ex_b^{n+1},
\end{align}
it is not true that $\Int_a$ and $\Ex_b$ commute. However, we can describe
what happens using the replacement operators defined in \autoref{sssec:lifting
operators}; recall that
\begin{align*}
    \Repl_{i, j}(w) = \sum_{\substack{1\leq k\leq n \\ w_k = \mathred{i}}}
                w_1 \cdots w_{k-1} \cdot \mathblue{j} \cdot w_{k+1} \cdots w_n.
\end{align*}

\begin{Lemma}
    \label{InExDiff}
    \label{ReplacementIntCommutation}
    For all $a, b \in [n]$,
    \begin{gather}
        \Ex_{b}^{n+1} \circ \Int_{a}^{n}
        - \Int_{a}^{n-1} \circ \Ex_{b}^{n}
        = \Repl_{b,a}^n + \KroneckerDelta_{a,b} (n+1) \mathid
        \label{InExDiff1}
        \\
        \Repl_{a,b}^{n+1} \circ \Int_a^{n} - \Int_a^{n} \circ \Repl_{a,b}^{n} = \Int_b^{n}
        \label{InExDiff2}
        \\
        \Ex_a^{n+1} \circ \Repl_{b,a}^{n+1} - \Repl_{b,a}^n \circ \Ex_a^{n+1} = \Ex_b^{n+1}
        \label{InExDiff3}
    \end{gather}
    where $\KroneckerDelta_{a,b}$ denotes the Kronecker delta function,
    which is $1$ if $a = b$ and $0$ if $a \neq b$.
\end{Lemma}

\begin{proof}
    \autoref{InExDiff1}.
    We construct a bijection between the words appearing in
    $\left(\Ex_{b}^{n+1} \circ \Int_{a}^{n}\right)(w)$
    and the words appearing in
    $(\Int_{a}^{n-1} \circ \Ex_{b}^{n})(w) + \Repl_{b,a}^n(w) + \KroneckerDelta_{a,b} (n+1) w$.

    Every word $u$ appearing in
    $\left(\Ex_{b}^{n+1} \circ \Int_{a}^{n}\right)(w)$
    corresponds to a pair $(i,j)$ such that $u$ is obtained from $w$ by
    inserting $a$ at position $i$
    and then removing the $j$-th letter provided that it is $b$;
    whereas every word $u$ appearing in
    $\left(\Int_{a}^{n-1} \circ \Ex_{b}^n\right)(w)$
    corresponds to a pair $(i',j')$ such that $u$ is obtained from $w$ by
    removing $w_{j'}$, provided that $w_{j'} = b$,
    and then inserting $a$ at position $i'$.

    The case $i = j$ can only occur if $a = b$ since it corresponds to
    inserting $a$ at position $i$ only to then remove it.
    So when $a = b$, we get $n+1$ copies of $w$.
    This accounts for the terms $\KroneckerDelta_{a,b} (n+1) w$.

    If $i < j$, then
    \begin{gather*}
        u = w_1 \cdots w_{i-1} \,{a}\, w_{i} \cdots w_{j-2} \, w_{j} \cdots w_n
        \qquad\text{and}\qquad
        w_{j-1} = b.
    \end{gather*}
    This word appears in
    $\left(\Int_{a}^{n-1} \circ \Ex_{b}^n\right)(w)$
    since it can be obtained from $w$ by removing $w_{j-1} = b$
    and inserting $a$ at position $i$.
    This corresponds to the pair $(i', j') = (i, j-1)$ of
    $\left(\Int_{a}^{n-1} \circ \Ex_{b}^n\right)(w)$.
    Hence, the pairs $(i,j)$ with $i < j$
    correspond to the pairs $(i', j')$ with $i' \leq j'$.

    If $i > j$, then
    \begin{gather*}
        u = w_1 \cdots w_{j-1} \, w_{j+1} \cdots w_{i-1} \,{a}\, w_{i} \cdots w_n
        \qquad\text{and}\qquad
        w_j = b.
    \end{gather*}
    This word appears in
    $\left(\Int_{a}^{n-1} \circ \Ex_{b}^n\right)(w)$
    since it can be obtained from $w$ by removing $w_j = b$
    and inserting $a$ at position $i-1$.
    This corresponds to the pair $(i', j') = (i-1, j)$ of
    $\left(\Int_{a}^{n-1} \circ \Ex_{b}^n\right)(w)$.
    Hence, the pairs $(i,j)$ with $i > j$
    correspond to the pairs $(i', j')$ with $i' \geq j'$.

    Consequently, we obtain all the pairs $(i', j')$ corresponding to the
    terms appearing in $\left(\Int_{a}^{n-1} \circ \Ex_{b}^n\right)(w)$
    as well as a second copy of the pairs $(i', j')$ with $i' = j'$.
    The pairs $(i', i')$ correspond to removing
    $w_{i'} = b$ and inserting $a$ into position $i'$;
    that is, they correspond to the terms of $\Repl_{b,a}(w)$.

    \autoref{InExDiff2}.
    If $b = a$, then $\Repl_{a,b}^{n}(w) = \mult_a(w)\, w$,
    where $\mult_a(w)$ denotes the number of occurrences of $a$ in $w$.
    Hence,
    \begin{equation*}
        \begin{multlined}
            \left(\Repl_{a,a}^{n+1} \circ \Int_a^{n} - \Int_a^{n} \circ \Repl_{a,a}^{n}\right)(w)
            \\
            = (\mult_a(w) + 1) \Int_a^{n}(w) - \mult_a(w) \Int_a^{n}(w) 
            = \Int_a^{n}(w).
        \end{multlined}
    \end{equation*}

    If $b \neq a$, then use \autoref{InExDiff1} 
    and \autoref{lemma:IntercalationsCommute} to write:
    \begin{align*}
        \Repl_{a,b}^{n+1} \circ \Int_a^{n}
        &=
        \Ex_{a}^{n+2} \circ \Int_{b}^{n+1} \circ \Int_a^{n}
        - \Int_{b}^{n} \circ \Ex_{a}^{n+1} \circ \Int_a^{n}
        \\
        &=
        \Ex_{a}^{n+2} \circ \Int_{a}^{n+1} \circ \Int_b^{n}
        - \Int_{b}^{n} \circ \Ex_{a}^{n+1} \circ \Int_a^{n},
        \\
        \Int_a^{n} \circ \Repl_{a,b}^{n}
        &=
        \Int_a^{n} \circ \Ex_{a}^{n+1} \circ \Int_{b}^{n}
        - \Int_a^{n} \circ \Int_{b}^{n-1} \circ \Ex_{a}^{n}
        \\
        &=
        \Int_a^{n} \circ \Ex_{a}^{n+1} \circ \Int_{b}^{n}
        - \Int_b^{n} \circ \Int_a^{n-1} \circ \Ex_{a}^{n}.
    \end{align*}
    The difference is then
    \begin{align*}
        &
        \Repl_{a,b}^{n+1} \circ \Int_a^{n}
        -
        \Int_a^{n} \circ \Repl_{a,b}^{n}
        \\
        &=
        \left(\Ex_{a}^{n+2} \circ \Int_{a}^{n+1}
        - \Int_a^{n} \circ \Ex_{a}^{n+1} \right) \circ \Int_{b}^{n}
        -
        \Int_{b}^{n} \circ \left(\Ex_{a}^{n+1} \circ \Int_a^{n}
        - \Int_a^{n-1} \circ \Ex_{a}^{n}\right)
        \\
        &=
        \left(\Repl_{a,a}^{n+1} + (n+2) \mathid\right) \circ \Int_b^{n}
        - \Int_b^{n} \circ \left(\Repl_{a,a}^{n} + (n+1) \mathid\right)
        \\
        &=
        \Int_b^{n} -
        \left(\Repl_{a,a}^{n+1} \circ \Int_b^{n}
        - \Int_b^{n} \circ \Repl_{a,a}^{n} \right)
        =
        \Int_b^{n}
    \end{align*}
    where the last equality follows from
    the fact that $\Repl_{a,a}(w) = \mult_a(w)\, w$:
    \begin{equation*}
        \begin{multlined}
            \left(\Repl_{a,a}^{n+1} \circ \Int_b^{n}
            - \Int_b^{n} \circ \Repl_{a,a}^{n} \right)(w)
            \\
            = \mult_a(w) \Int_b^{n}(w) - \Int_b^{n} (\mult_a(w)\, w) = 0.
        \end{multlined}
    \end{equation*}

    \autoref{InExDiff3}.
    Use \autoref{InExDiff2} and the fact that
    the adjoint of $\Int_a^n$ is $\Ex_a^{n+1}$ and
    the adjoint of $\Repl_{a,b}^{n}$ is $\Repl_{b,a}^{n}$.
\end{proof}

\begin{Remark}
    \label{rem:commutativity-of-RSW-operators}
    In \cite{RSW2014}, Reiner, Saliola and Welker introduced and studied
    a family of operators $\nu_{(n-k,1^k)}$, for $1 \leq k < n$, that
    includes the random-to-random operator.
    Using the notation of this paper, they admit the following description:
    \begin{gather*}
        \nu_{(n-k, 1^k)}
        =
        \sum_{i_1 < i_2 < \dots < i_k}
        \Int_{i_1} \circ \Int_{i_2} \circ \cdots \circ \Int_{i_k}
        \circ
        \Ex_{i_k} \circ \cdots \circ \Ex_{i_2} \circ \Ex_{i_1}.
    \end{gather*}
    Setting $k=1$ recovers the random-to-random operator (\autoref{r2r=t2r.r2t}).

    In \cite[Theorem~1.1]{RSW2014}, it is proved that the operators from this
    family pairwise commute.
    The results in \autoref{InExDiff}, which describe the relationship between
    $\Int_a \circ \Ex_b$ and $\Ex_b \circ \Int_a$, can be used to give another
    proof of this fact. However, this proof is rather involved and not very
    conceptual, so we leave the details to the interested reader.
\end{Remark}

Equipped with these results, we can now prove \autoref{eq:main-identity}
as well as a useful reformulation.

\begin{Theorem}
    \label{BracketRandInt}
    For all $a \in [n]$,
    \begin{align}
        \label{eq:BracketRandInt1}
        \Rand_{n+1} \circ \Int_a^{n} - \Int_a^{n} \circ \Rand_n
        &= (n+1) \Int_a^{n} + \sum_{1 \leq b \leq n} \Int_b^{n} \circ \Repl_{b,a}^{n}
        \\
        \label{eq:BracketRandInt2}
        &= \Int_a^{n}
        + \sum_{1 \leq b \leq n} \Repl_{b,a}^{n+1} \circ \Int_b^{n}.
    \end{align}
\end{Theorem}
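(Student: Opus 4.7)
The plan is to prove both identities by reducing $\Rand$ to a sum of insertions composed with extractions via \autoref{r2r=t2r.r2t}, and then invoking the commutation rules of \autoref{InExDiff}. No new combinatorial idea is needed; the work is in organizing the bookkeeping.

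For \autoref{eq:BracketRandInt1}, I would expand
\[
\Rand_{n+1} \circ \Int_a^n = \sum_{c \in [n]} \Int_c^n \circ \Ex_c^{n+1} \circ \Int_a^n
\]
using \autoref{r2r=t2r.r2t}, and then substitute \autoref{InExDiff1} for each factor $\Ex_c^{n+1} \circ \Int_a^n$. This splits the right-hand side into three pieces. The Kronecker piece $\sum_c \Int_c^n \circ \KroneckerDelta_{a,c}(n+1)\mathid$ collapses to $(n+1) \Int_a^n$; the replacement piece is exactly $\sum_{b \in [n]} \Int_b^n \circ \Repl_{b,a}^n$; and the leftover piece $\sum_c \Int_c^n \circ \Int_a^{n-1} \circ \Ex_c^n$ equals $\Int_a^n \circ \sum_c \Int_c^{n-1} \circ \Ex_c^n = \Int_a^n \circ \Rand_n$ by the commutativity in \autoref{lemma:IntercalationsCommute} and a second application of \autoref{r2r=t2r.r2t}. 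Rearranging yields \autoref{eq:BracketRandInt1}.

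For \autoref{eq:BracketRandInt2}, I would apply \autoref{InExDiff2} with the roles of $a$ and $b$ exchanged, which gives
\[
\Int_b^n \circ \Repl_{b,a}^n = \Repl_{b,a}^{n+1} \circ \Int_b^n - \Int_a^n.
\]
Summing this identity over $b \in [n]$ rewrites $\sum_b \Int_b^n \circ \Repl_{b,a}^n$ as $\sum_b \Repl_{b,a}^{n+1} \circ \Int_b^n - n\, \Int_a^n$; substituting into the already-established \autoref{eq:BracketRandInt1} absorbs $n$ copies of $\Int_a^n$ into the coefficient, leaving the single copy that appears in \autoref{eq:BracketRandInt2}.

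The main obstacle is clerical rather than conceptual: one must track the superscripts on $\Int$, $\Ex$, and $\Repl$ as they shift with the length of the word, and be careful that the Kronecker term $\KroneckerDelta_{a,c}(n+1)\mathid$ in \autoref{InExDiff1} is correctly attributed to the single summand $c = a$. Once this bookkeeping is handled, both identities follow formally from \autoref{r2r=t2r.r2t}, \autoref{lemma:IntercalationsCommute}, and \autoref{InExDiff}.
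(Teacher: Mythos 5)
Your proposal is correct and follows essentially the same route as the paper: expand $\Rand_{n+1}$ and $\Rand_n$ via \autoref{r2r=t2r.r2t}, apply \autoref{InExDiff1} termwise together with the commutativity of insertions from \autoref{lemma:IntercalationsCommute} to get \autoref{eq:BracketRandInt1}, and then rewrite $\Int_b^n \circ \Repl_{b,a}^n$ using \autoref{InExDiff2} (with $a$ and $b$ swapped) to obtain \autoref{eq:BracketRandInt2}. The only cosmetic difference is that the paper writes the sum in $\Rand_{n+1}$ over $1 \leq b \leq n+1$ and then discards the $b = n+1$ term because $\Ex_{n+1}^{n+1} \circ \Int_a^n = 0$, whereas you start directly with the sum over $[n]$; the computations are otherwise identical.
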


\begin{proof}
    Since $\Rand_n = \sum_{b \leq n} \Int_b \circ \Ex_b$,
    and $\Ex_{n+1} \circ \Int_a = 0$ (because $a \neq n + 1$),
    \begin{equation*}
        \begin{aligned}
        & \Rand_{n+1} \circ \Int_a^{n} - \Int_a^{n} \circ \Rand_{n}
        \\
        & = \left( \sum_{1 \leq b \leq n+1}
            \Int_b^{n} \circ \Ex_b^{n+1} \right) \circ \Int_a^{n}
            - \Int_a^{n} \circ 
            \left(\sum_{1 \leq b \leq n} \Int_b^{n-1} \circ \Ex_b^{n}\right)
        \\
        & =
            \sum_{1 \leq b \leq n}
             \left(
                \Int_b^{n} \circ \Ex_b^{n+1} \circ \Int_a^{n} -
                \Int_a^{n} \circ \Int_b^{n-1} \circ \Ex_b^{n}
            \right).
        \end{aligned}
    \end{equation*}
    From \autoref{InExDiff} and
    $\Int_b^{n} \circ \Int_a^{n-1} = \Int_a^{n} \circ \Int_b^{n-1}$
    (\autoref{lemma:IntercalationsCommute}), it follows that
    \begin{align*}
        \Int_b^{n} \circ \Ex_b^{n+1} \circ \Int_a^{n}
        - \Int_a^{n} \circ \Int_b^{n-1} \circ \Ex_b^{n}
        = \Int_b^{n} \circ \Repl_{b,a}^{n} + \KroneckerDelta_{a,b} (n+1) \Int_b^{n}.
    \end{align*}
    This establishes the first equality.
    For the second equality:
    \begin{align*}
        \Rand_{n+1} \circ \Int_a^{n} - \Int_a^{n} \circ \Rand_n
        &= (n+1) \Int_a^{n} + \sum_{1 \leq b \leq n} \Int_b^{n} \circ \Repl_{b,a}^{n}
        \\
        &= (n+1) \Int_a^{n}
        + \sum_{1 \leq b \leq n}
        \left(
            \Repl_{b,a}^{n+1} \circ \Int_b^{n} - \Int_a^{n}
        \right)
        \\
        &= \Int_a^{n}
        + \sum_{1 \leq b \leq n} \Repl_{b,a}^{n+1} \circ \Int_b^{n}.
        \qedhere
    \end{align*}
\end{proof}

\subsection{Restriction to Specht modules $S^\lambda$}
\label{action-of-the-symmetric-group}

\autoref{BracketRandInt} is an identity of linear transformations from $M^\lambda$
to $M^{\lambda + \vec e_i}$.
In this section, we restrict this identity to the Specht submodule $S^\lambda$
of $M^\lambda$.
We begin by recalling the definition of $S^\lambda$ as well as the
decomposition of $M^\lambda$ into Specht modules, which
leads to a useful simplification of the identity in \autoref{BracketRandInt}.

\subsubsection{Specht modules}

For a sequence $\alpha$ of non-negative integers that sum to $n$, denote by
$M^\alpha$ the subspace of $\MM$ spanned by the words of evaluation $\alpha$.
Each subspace $M^\alpha$ is invariant under the $\symm_n$--action, and
so it is an $\symm_n$--module. Note that $M^\alpha \cong M^\beta$ as
$\symm_n$--modules if and only if $\alpha$ is a rearrangement of $\beta$.

Each $M^\alpha$ contains a distinguished submodule $S^\alpha$
called the \emph{Specht submodule}. We recall its construction for the case of
a partition $\lambda$; for the general case, use that $M^\alpha \cong
M^{\decreasingrearrangement{\alpha}}$, where $\decreasingrearrangement{\alpha}$
is the rearrangement of $\alpha$ into a weakly decreasing sequence.

Let $\lambda$ be a partition of $n$ and let $t$ be
a tableau of shape $\lambda$.
Let $\word(t)$ denote the word in which the $i$-th letter is $r$, where $r$ is
the number of the row of $t$ that contains $i$.
Note that the evaluation of $\word(t)$ is $\lambda$.

Next, let $N_t$ denote the following element of the group algebra
of $\symm_n$
\begin{gather*}
    N_t = \sum_{\sigma \in \ColStab(t)} \sign(\sigma) \sigma,
\end{gather*}
and let
\begin{gather*}
    \w_t = \word(t) \cdot N_t
        = \sum_{\sigma \in \ColStab(t)} \sign(\sigma) \word(\sigma(t)),
\end{gather*}
where the sums range over all the permutations $\sigma \in \symm_n$ that
permute the entries in the columns of $t$ in all possible ways.

The \emph{Specht submodule} $S^\lambda$ of $M^\lambda$ is the subspace
\begin{gather*}
    S^\lambda = \spn\left\{ \w_t : t \in \SYT_\lambda\right\},
\end{gather*}
where $\SYT_\lambda$ denotes the set of standard (Young) tableaux of shape
$\lambda$.

\begin{Example}
    \label{ex:tableau2word}
    If $t$ and $s$ are the following tableaux, respectively,
    \begin{gather*}
        t = \tikztableausmall{{2,8,6,7},{5,3},{1,4}}
        \qquad
        \qquad
        s = \tikztableausmall{{5,3,7,1},{6,8},{2,4}}
    \end{gather*}
    then $\word(t) = 31232111$ and $\word(s) = 13131212$.
    These are the words $w$ and $w \cdot \sigma$ of
    \autoref{ex:permutation-action-on-words} (with $a=1$, $b=2$, $c=3$),
    which illustrates the identity
    $\word(\sigma\inv(t)) = \word(t) \cdot \sigma$ since $t = \sigma(s)$.

    If $\symm_{I}$ denotes the subgroup of permutations of the elements of the
    set $I$, then
    \begin{align*}
        \ColStab(t) &= \symm_{\{2,5,1\}} \times \symm_{\{8,3,4\}} \times \symm_{\{6\}} \times \symm_{\{7\}},
        \\
        \ColStab(s) &= \symm_{\{5,6,2\}} \times \symm_{\{3,8,4\}} \times \symm_{\{7\}} \times \symm_{\{1\}}.
    \qedhere
    \end{align*}
\end{Example}


\begin{Remark}
    \label{remark:specht-modules-via-polytabloids}
    A common way to define the Specht modules is as the span of
    \emph{polytabloids} $e_t = N_t \cdot \{t\}$, where $\{t\}$ is the
    \emph{tabloid} corresponding to the tableau $t$ \cite{James1978,Sagan2001}.
    Our definition encodes a tabloid $\{t\}$ as $\word(t)$. This encoding is an
    isomorphism of $\symm_n$--modules as the $\symm_n$--action on words
    corresponds to the usual $\symm_n$--action on tabloids:
    $\word(t) \cdot \sigma = \word(\sigma\inv(t))$
    (\cf \autoref{ex:tableau2word}).
\end{Remark}

\subsubsection{Decomposition of $M^\lambda$}
\label{sss:Mdecomposition}

Up to isomorphism, the Specht modules $S^\lambda$, for $\lambda \vdash n$, are
all the irreducible $\symm_n$--modules.
Consequently, $M^\lambda$ decomposes into a direct sum of submodules isomorphic
to a Specht module.
The decomposition of the modules $M^\lambda$ into irreducible
$\symm_n$--modules is known as Young's Rule \cite[Theorem~2.11.2]{Sagan2001}, which
we describe next.

For each tableau $T$ of size $n$,
there is an $\symm_n$--module morphism
$$\Theta_T : M^{\shape(T)} \to M^{\evaluation(T)}$$
that embeds the Specht module $S^{\shape(T)}$ into $M^{\evaluation(T)}$.
Explicitly, $\Theta_T$ is the $\symm_n$--module morphism defined
on the increasing word with evaluation equal to $\shape(T)$
as the sum of all the words that can be obtained from $T$ by permuting the
entries in the rows and then concatenating the rows.
\begin{Example}
    Consider the morphism
    \begin{gather*}
        \Theta_{\!\!\!\tikztableauscript{{2,1,1},{3,2},}} : M^{(3,2)} \to M^{(2,2,1)}.
    \end{gather*}
    There are six words that can be obtained by permuting the elements in the
    rows of the tableau, thus
    \begin{gather*}
        \Theta_{\!\!\!\tikztableauscript{
                {$\mathred2$,$\mathred1$,$\mathred1$},
                {$\mathblue3$,$\mathblue2$},}}\!\!
                (11122) \quad=\quad
        \mathred{211}\mathblue{32} + \mathred{121}\mathblue{32} +
        \mathred{112}\mathblue{32} + \mathred{211}\mathblue{23} +
        \mathred{121}\mathblue{23} + \mathred{112}\mathblue{23}.
    \end{gather*}
    Since this is a $\symm_5$--morphism,
    the image of any other word $11122 \cdot \sigma$ in $M^{(3,2)}$
    is
    \begin{gather*}
        \Theta_{\!\!\!\tikztableauscript{
                {$\mathred2$,$\mathred1$,$\mathred1$},
                {$\mathblue3$,$\mathblue2$},}}\!\!
                (11122 \cdot \sigma)
        \quad=\quad
        \Theta_{\!\!\!\tikztableauscript{
                {$\mathred2$,$\mathred1$,$\mathred1$},
                {$\mathblue3$,$\mathblue2$},}}\!\!
                (11122)
                \cdot \sigma.
        \qedhere
    \end{gather*}
\end{Example}

\begin{Proposition}[{\cite[Theorem~2.11.2]{Sagan2001}}]
    \label{Mdecomposition}
    Let $\SSYT_n$ denote the set of semistandard tableaux of $n$
    and $K_{\lambda, \mu}$ the set of semistandard tableaux of shape
    $\lambda$ and evaluation $\mu$. Then
    \begin{gather*}
        M^\mu = \bigoplus_{\substack{T \in \SSYT_n \\ \evaluation(T)=\mu}} \Theta_T\left(S^{\shape(T)}\right)
        \, \cong \,\,  \bigoplus_{\lambda \dominateseq \mu} K_{\lambda,\mu} \, S^\lambda
    \end{gather*}
    where $\dominateseq$ denotes dominance order on partitions.
    In particular, each $M^\alpha$ contains exactly one copy of $S^\alpha$.
\end{Proposition}

\subsubsection{Restriction of \autoref{eq:BracketRandInt1} to the Specht module $S^\lambda$}

Let $\lambda$ be a partition of $n$. The linear transformations $\Repl_{b,a}$
are $\symm_n$--module morphisms. Specifically, they are special cases of the
morphisms $\Theta_T$ defined above.
\begin{Example}
    \begin{gather*}
        \Repl_{1,2}(1112) =
        2112 + 1212 + 1122 =
            \Theta_{\!\!\!\tikztableauscript{
                    {$\mathred1$,$\mathred1$,$\mathred2$},
                    {$\mathblue2$},}}\!\!(1112).
            \qedhere
    \end{gather*}
\end{Example}
Consequently, the restriction of $\Repl_{b,a}$ to $S^\lambda$ will either be
the trivial morphism or an isomorphism.

\begin{Lemma}
    \label{vanishingRab}
    Let $\lambda \vdash n$ and $\alpha = \lambda - \vec e_b + \vec e_a$.
    If $\Repl_{b,a}^{n}$ is nonzero on $S^\lambda$, then
    $\lambda$ dominates the non-increasing rearrangement of $\alpha$.
    Consequently, if $a < b$, then $\Repl_{b,a}^{n}(S^\lambda) = 0$.
\end{Lemma}

\begin{proof}
    Suppose $\Repl_{b,a}^n$ is nonzero on $S^\lambda$.
    The map $\Repl_{b,a}^n$ is an $\symm_n$--module morphism from $M^\lambda$ to
    $M^{\alpha}$, and so the restriction of $\Repl_{b,a}^n$ to $S^\lambda$ is an
    $\symm_n$--module morphism that maps $S^\lambda$ into $M^{\alpha}$.
    Since $\Repl_{b,a}^n|_{S^\lambda} \neq 0$, by Schur's lemma it is an
    isomorphism onto its image, and so $M^{\alpha}$ contains a submodule isomorphic to
    $S^\lambda$.
    \begin{description}
        \item[Case 1]
            Suppose $\alpha$ is a partition.
            Then, by Young's rule, $M^{\alpha}$ contains a submodule
            isomorphic to $S^\lambda$ if and only if $\lambda$ dominates
            $\alpha = \lambda - \vec e_b + \vec e_a$,
            which happens if and only if $b \leq a$.

        \item[Case 2]
            Suppose $\alpha$ is not a partition.
            By rearranging the entries of $\alpha$
            we obtain a partition $\mu$.
            As $\symm_n$--modules, $M^\mu \cong M^\alpha$
            and so by Young's rule $\lambda$ dominates $\mu$.
            If $a < b$, then
            \begin{align*}
                \begin{array}{cccccccc}
                    \lambda = (\lambda_1 & \lambda_2 & \cdots & \lambda_{a-1} & \lambda_a & \cdots & \lambda_b & \cdots) \\
                    \alpha  = (\lambda_1 & \lambda_2 & \cdots & \lambda_{a-1} & \lambda_a + 1 & \cdots & \lambda_b - 1 & \cdots)
                \end{array}
            \end{align*}
            which would imply that $\lambda$ does not dominate $\mu$, a contradiction.
            \qedhere
    \end{description}
\end{proof}

\begin{Corollary}
    \label{SpechtRestritionOfBracketRandInt}
    Let $\lambda \vdash n$ and $a \in [n]$. Then,
    \begin{align*}
        \left(\Rand_{n+1} \circ \Int_a
                - \Int_a \circ \Rand_n\right)\Big|_{S^\lambda}
        = (n+1) \Int_a\Big|_{S^\lambda} + \sum_{1 \leq b \leq a}
            \left(\Int_b \circ \Repl_{b,a}\right)\Big|_{S^\lambda}.
    \end{align*}
\end{Corollary}

\subsection{Projection to the Specht module $S^{\lambda + \vec e_a}$}
\label{projection-to-specht-module}

The restriction to $S^\lambda$ of the operator
$\Rand_{n+1} \circ \Int_a - \Int_a \circ \Rand_n$
defines a linear transformation from $S^\lambda$
to $M^{\lambda + \vec e_a}$.
We compose this with the canonical
projection onto $S^{\lambda + \vec e_a}$.
We begin by reviewing the definition and properties
of these projectors and culminate with a proof
of \autoref{thm:lifting-eigenvectors}\autoref{thm:lifting-eigenvectors-part-a}
(see \autoref{liftingeigenvectors2}).

\subsubsection{Isotypic projectors}
\label{sssec:isotypic-projectors}

Next we want to consider the projection of
\begin{gather*}
    (\Rand_{n+1} \circ \Int_a - \Int_a \circ \Rand_n)(S^\lambda)
\end{gather*}
onto the $S^\mu$--isotypic component of $M^\mu$, where $\mu = \lambda + \vec
e_a$.
We begin by recalling the definition of the isotypic projector associated with
a simple module.

Let $G$ be any finite group.
For any simple $G$--module $W$ with character denoted by $\chi_W$,
let $\mathfrak p_W$ denote the following element of the group algebra $\mb C G$
of $G$:
\begin{gather*}
    \mathfrak p_W = \frac{\dim(W)}{|G|} \sum_{g \in G} \overline{\chi_W(g)} \, g.
\end{gather*}
This element defines a projection from any $G$--module $V$ onto its
$W$--isotypic component:
\begin{eqnarray*}
    \isoproj_W : V & \longrightarrow & V \\
    v & \longmapsto & v \cdot \mathfrak p_W.
\end{eqnarray*}
To see that $\isoproj_W$ is a morphism of $G$--modules,
first note that $\mathfrak p_W$ lies in the center of the
group algebra $\mb CG$:
for any $h \in G$,
\begin{align}
    \label{isoprojIsCentral}
    \mathfrak p_W \, h
    = \frac{\dim(W)}{|G|} \sum_{g \in G} \overline{\chi_W(g)} \, gh
    = \frac{\dim(W)}{|G|} \sum_{h g' h\inv \in G} \overline{\chi_W(h g' h\inv)} \, hg'
    = h \, \mathfrak p_W(v),
\end{align}
where the last equality follows from the fact that $\chi_W$ is constant on
conjugacy classes.
Then $\isoproj_W$ is a morphism of $G$--modules, since for any $g \in G$,
\begin{equation}
    \label{eq:ProjectionsAreMorphisms}
    \isoproj_W(v \cdot g)
    = v \cdot (g \mathfrak p_W)
    = v \cdot (\mathfrak p_W g)
    = \isoproj_W(v) \cdot g.
\end{equation}
Moreover, $\isoproj_W$ commutes with any $G$--module endomorphism $\varphi$ of
$V$, since
\begin{equation}
    \label{eq:ProjectionsCommuteWithMorphisms}
    \isoproj_W(\varphi(v))
    = \varphi(v) \cdot \mathfrak p_W
    = \varphi(v \cdot \mathfrak p_W)
    = \varphi(\isoproj_W(v)).
\end{equation}

\subsubsection{Projection onto $S^{\lambda + \vec e_a}$}

Although $\Int_a^{n}(S^\lambda)$ is not a submodule of
$M^{\lambda + \vec e_a}$,
we can show that it is contained in a multiplicity-free
submodule of $M^{\lambda + \vec e_a}$.

\begin{Lemma}
    \label{l:SubmoduleGeneratedByLifts}
    The subspace $\Int_a^{n}(S^\lambda)$ is contained in
    a $\symm_{n+1}$--submodule of $M^{\lambda + \vec e_a}$ that is isomorphic
    to $\bigoplus_\mu S^\mu$, where $\mu$ ranges over the partitions obtained
    from $\lambda$ by adding a cell in row $r$ with $r \leq a$.
\end{Lemma}

\begin{proof}
    Let $w$ be a word of length $n$. Then
    \begin{gather*}
        \Int_a^{n}(w) = w \shuffle a,
    \end{gather*}
    where $\shuffle$ denotes the \emph{shuffle product} of words
    defined in \autoref{sec:algebra-of-words}.
    Moreover, if $b$ is a letter that does not occur in $w$, then
    \begin{gather*}
        \Int_a^{n}(w) = \Repl_{b,a}(w \shuffle b).
    \end{gather*}

    Let $b = \ell(\lambda) + 1$ (so that $b$ is a letter not occuring in any
    word $w \in M^\lambda$).
    Then
    \begin{gather*}
        S^\lambda \shuffle b = \left\{ x \shuffle b : x \in S^\lambda \right\}
    \end{gather*}
    is contained in the $\symm_{n+1}$--submodule of $M^{\lambda + \vec e_b}$
    generated by $x \cdot b$ with $x \in S^\lambda$:
    \begin{gather*}
        N = \langle \left\{ x \cdot b : x \in S^\lambda \right\} \rangle.
    \end{gather*}
    The submodule $N$ is isomorphic to the induced module
    $\Ind_{\symm_n \times \symm_1}^{\symm_{n+1}}(S^\lambda \otimes S^1)$
    (see \cite{James1978}).
    Then, by the branching rule for representations of the symmetric group, $N$
    decomposes as the multiplicity-free direct sum of the Specht modules
    $S^\mu$, where $\mu$ is a partition of $n+1$ that contains $\lambda$.

    Since $\Int_a^n(w) = \Repl_{b,a}(w \shuffle b)$,
    it follows that
    \begin{gather*}
        \Int_a^n(S^\lambda)
        \subseteq
        \Repl_{b,a}\left( N \right)
        \cong
        \bigoplus_{\mu \supseteq \lambda} \Repl_{b,a}\left( S^\mu \right).
    \end{gather*}
    Since $\Repl_{b,a}$ is an $\symm_{n+1}$--module morphism,
    $\Repl_{b,a}(S^\mu)$ is either $0$ or isomorphic to $S^\mu$.
    Since these are submodules of
    $\Repl_{b,a}(M^{\lambda + \vec e_b}) \subseteq M^{\lambda + \vec e_a}$,
    it follows that $\mu$ dominates $\lambda + \vec e_a$.
    So $\mu$ is obtained from $\lambda$ by adding a cell in row $r$ with $r
    \leq a$.
\end{proof}

\begin{Definition}
    \label{d:projlift}
    For partitions $\lambda \vdash n$, $\mu \vdash n+1$
    and $a \in \{1, 2, \dots, \ell(\lambda) + 1\}$,
    let $\projlift[a]$ denote the restriction of
    $\isoproj_\mu \circ \Int_a : M^\lambda \to M^{\lambda + \vec e_a}$ to $S^\lambda$:
    \begin{align}
        \label{e:projlift}
        \projlift[a] = \left. \left(\isoproj_\mu \circ \Int_a\right) \right|_{S^\lambda}
        : S^\lambda \to M^{\lambda + \vec e_a}.
    \end{align}
\end{Definition}

Note that the image of $\projlift[a]$ is contained in the $S^\mu$--isotypic
component of $M^{\lambda + \vec e_a}$.
Moreover, by \autoref{l:SubmoduleGeneratedByLifts},
the subspace $\Int_a(S^\lambda)$ is contained in a submodule of
$M^{\lambda + \vec e_a}$ isomorphic to a direct sum of
Specht modules $S^\mu$, where $\mu$ ranges over the partitions obtained from
$\lambda$ by adding a cell in row $r$ with $r \leq a$.
Hence, if $\mu \neq \lambda + \vec e_r$ for all $1 \leq r \leq a$,
then $\projlift[a] = 0$.

%

\begin{Lemma}
    \label{MainLemma}
    For $\lambda = (\lambda_1, \lambda_2, \dots, \lambda_l) \vdash n$,
    $a \in \{1, 2, \dots, l + 1\}$, and
    $\mu = \lambda + \vec e_r$ with $1 \leq r \leq a$,
    \begin{equation}
        \label{eq:MainLemma}
        \begin{multlined}
            \Rand_{n+1} \circ \projlift - \projlift \circ \Rand_n
            \\
            \shoveleft[.5in] {
                = \left((n + 1) + (\lambda_a + 1) - a\right) \projlift
                + \sum_{r \leq b < a} \Repl_{b,a} \circ \projlift[b].
            }
        \end{multlined}
    \end{equation}
\end{Lemma}

\begin{proof}
    By \autoref{SpechtRestritionOfBracketRandInt},
    \begin{align*}
        \left(\Rand_{n+1} \circ \Int_a
                - \Int_a \circ \Rand_n\right)\Big|_{S^\lambda}
        = (n+1) \Int_a\Big|_{S^\lambda} + \sum_{1 \leq b \leq a}
            \left(\Int_b \circ \Repl_{b,a}\right)\Big|_{S^\lambda}.
    \end{align*}
    Since $\Rand_{n+1}$ is given by the action of an element of the group
    algebra of $\symm_{n+1}$ (\autoref{random-to-random-in-group-algebra})
    and $\isoproj_\mu$ is an $\symm_{n+1}$--module morphism,
    $\isoproj_\mu$ and $\Rand_{n+1}$ commute, and so
    \begin{equation}
        \label{eqn1}
        \begin{multlined}
            \Rand_{n+1} \circ \projlift
            - \projlift \circ \Rand_n
            \\
            \shoveleft[.5in]
            = \left(n + 1\right) \projlift
            +
            \sum_{1 \leq b \leq a} \left(\isoproj_\mu \circ \Int_b
                    \circ \Repl_{b,a}\right) \Big|_{S^\lambda}.
        \end{multlined}
    \end{equation}
    By \autoref{ReplacementIntCommutation},
    $\Int_b \circ \Repl_{b,a} = \Repl_{b,a} \circ \Int_b - \Int_a$,
    and so
    \begin{align*}
        \isoproj_\mu \circ \Int_b \circ \Repl_{b,a}
        &= \isoproj_\mu \circ \Repl_{b,a} \circ \Int_b - \isoproj_\mu \circ \Int_a \\
        &= \Repl_{b,a} \circ \isoproj_\mu \circ \Int_b - \isoproj_\mu \circ \Int_a,
    \end{align*}
    where the last equality follows from the fact that
    $\isoproj_\mu$ commutes with $\symm_{n+1}$--module morphisms
    (see \autoref{eq:ProjectionsCommuteWithMorphisms}).
    Thus, the right hand side of \autoref{eqn1} becomes
    \begin{equation}
        \label{eqn2}
        \begin{multlined}
            \left((n + 1) - a\right) \projlift
            +
            \sum_{1 \leq b \leq a} \Repl_{b,a} \circ \projlift[b].
        \end{multlined}
    \end{equation}

    Next, we prove that $\Repl_{b,a} \circ \projlift[b] = 0$ for $b < r$.
    By \autoref{l:SubmoduleGeneratedByLifts},
    $\Int_b(S^\lambda)$ is contained
    in a submodule of $M^{\lambda + \vec e_b}$
    that is isomorphic to a direct sum of the modules
    $S^{\lambda + \vec e_1}$,
    $S^{\lambda + \vec e_2}$, \dots, $S^{\lambda + \vec e_b}$.
    Hence, $\projlift[b] = 0$ if $\mu = \lambda + \vec e_r$
    with $r > b$. Thus, \autoref{eqn2} becomes
    \begin{align}
        \label{eqn3}
        \left((n + 1) - a\right) \projlift
        +
        \sum_{r \leq b \leq a} \Repl_{b,a} \circ \projlift[b].
    \end{align}

    Finally, consider the case $b = a$.
    If $w$ is a word that contains $\lambda_a + 1$ occurrences
    of $a$, as is the case for all words in $M^{\lambda + \vec e_a}$,
    then $\Repl_{a,a}(w) = (\lambda_a + 1) w$, and so
    \autoref{eqn3} becomes
    \begin{gather*}
        \left((n + 1) + (\lambda_a + 1) - a\right) \projlift
        +
        \sum_{r \leq b < a} \Repl_{b,a} \circ \projlift[b].
        \qedhere
    \end{gather*}
\end{proof}

\begin{Theorem}
    \label{liftingeigenvectors2}
    For $\lambda = (\lambda_1, \lambda_2, \dots, \lambda_l) \vdash n$,
    $a \in \{1, 2, \dots, l + 1\}$, and
    $\mu = \lambda + \vec e_r$ with $1 \leq r \leq a$,
    \begin{equation*}
        \begin{multlined}
            \Rand_{n+1} \circ \projlift[a]
            - \projlift[a] \circ \Rand_n
            \\
            \shoveleft[.5in]
            =
            \left((n+1) + (\lambda_r + 1) - r\right) \,
            \projlift[a].
        \end{multlined}
    \end{equation*}
    In particular, if $v \in S^\lambda$ is an eigenvector of $\Rand_n$
    with eigenvalue $\varepsilon$, then
    either $\projlift[a](v) = 0$
    or $\projlift[a](v)$ is an eigenvector
    of $\Rand_{n+1}$ with eigenvalue
    \begin{gather}
        \label{eigenvalue-shift-formula}
        \varepsilon + (n+1) + (\lambda_r + 1) - r.
    \end{gather}
\end{Theorem}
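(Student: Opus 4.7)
My plan is to proceed by induction on $a - r \geq 0$, with Lemma~\ref{MainLemma} providing the operator identity to bootstrap from. For brevity, write $T_c := (n+1) + (\lambda_c + 1) - c$, so the claim is $\Rand_{n+1} \circ \projlift[a] - \projlift[a] \circ \Rand_n = T_r \, \projlift[a]$. The base case $a = r$ is immediate: Lemma~\ref{MainLemma} specializes to exactly this identity because the sum over $r \leq b < a$ is empty.

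For the inductive step, assume the theorem holds for all pairs $(r, b)$ with $r \leq b < a$. Lemma~\ref{MainLemma} reads
\begin{equation*}
    \Rand_{n+1} \circ \projlift[a] - \projlift[a] \circ \Rand_n = T_a \, \projlift[a] + G_a,
    \qquad G_a := \sum_{r \leq b < a} \Repl_{b,a} \circ \projlift[b].
\end{equation*}
The inductive hypothesis gives $\Rand_{n+1} \circ \projlift[b] - \projlift[b] \circ \Rand_n = T_r \, \projlift[b]$ for each $b$ in the sum. Since $\Repl_{b,a}$ is an $\symm_{n+1}$-module morphism (verified directly on words, as already used in Lemma~\ref{MainLemma}), it commutes with the right-multiplication operator $\Rand_{n+1}$, which upgrades the inductive identity to $\Rand_{n+1} \circ (\Repl_{b,a}\circ\projlift[b]) - (\Repl_{b,a}\circ\projlift[b])\circ\Rand_n = T_r \, \Repl_{b,a}\circ\projlift[b]$ for each $b$. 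Summing yields $\Rand_{n+1} \circ G_a - G_a \circ \Rand_n = T_r \, G_a$.

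The crux of the proof is to deduce from these two bracket identities the operator equation $(T_r - T_a)\projlift[a] = G_a$, which, combined with Lemma~\ref{MainLemma}, immediately gives the desired clean identity. I expect this step to be the main obstacle. The natural strategy is to apply Lemma~\ref{MainLemma} to a $\Rand_n$-eigenvector $v\in S^\lambda$ of eigenvalue $\varepsilon$: this gives $\Rand_{n+1}(\projlift[a](v)) = (\varepsilon+T_a)\projlift[a](v) + G_a(v)$, with $G_a(v)$ either zero or a $(\varepsilon+T_r)$-eigenvector by the inductive bracket relation for $G_a$, so $\projlift[a](v)$ splits into at most two $\Rand_{n+1}$-eigencomponents of eigenvalues $\varepsilon + T_r$ and $\varepsilon + T_a$ (distinct for $r<a$, since $(\lambda_r-r) - (\lambda_a-a) = (\lambda_r - \lambda_a) + (a - r) \geq a - r > 0$ when $\lambda$ is a partition). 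Ruling out the $(\varepsilon+T_a)$-component should exploit that the image of $\projlift[a]$ lies in the $S^\mu$-isotypic component of $M^{\lambda+\vec e_a}$, together with the multiplicity-free structural information from Lemma~\ref{l:SubmoduleGeneratedByLifts}. Once $(T_r - T_a)\projlift[a] = G_a$ is established, the ``in particular'' eigenvector-shifting conclusion is immediate: if $\Rand_n(v) = \varepsilon v$, then either $\projlift[a](v) = 0$ or $\projlift[a](v)$ is a $\Rand_{n+1}$-eigenvector with eigenvalue $\varepsilon + T_r = \varepsilon + (n+1) + (\lambda_r + 1) - r$.
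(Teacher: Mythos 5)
Your base case and the two bracket identities are derived correctly: the $a=r$ case follows immediately from \autoref{MainLemma} since the summation is empty, and applying the $\symm_{n+1}$-morphism $\Repl_{b,a}$ to the inductive identities (using that $\Repl_{b,a}$ commutes with $\Rand_{n+1}$) does yield $\Rand_{n+1}\circ G_a - G_a\circ\Rand_n = T_r\,G_a$. The computation $T_r - T_a = (\lambda_r-\lambda_a)+(a-r)>0$ for $r<a$ is also correct. However, you explicitly leave the crux step $(T_r - T_a)\,\projlift[a] = G_a$ unproved, and the strategy you outline does not close the gap. Knowing that both $\projlift[a](v)$ and $G_a(v)$ lie in the single copy $\isoproj_\mu(N)\cong S^\mu$ coming from \autoref{l:SubmoduleGeneratedByLifts} is not enough: $\Rand_{n+1}$ restricted to a copy of $S^\mu$ generally has many eigenvalues, possibly including both $\varepsilon+T_r$ and $\varepsilon+T_a$, so nothing a priori forces the $(\varepsilon+T_a)$-eigencomponent of $\projlift[a](v)$ to vanish. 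In fact, within the span of $\projlift[a]$ and $G_a$, the bracket operator $X\mapsto\Rand_{n+1}X - X\Rand_n$ has $T_a$-eigenvector $(T_a-T_r)\projlift[a]+G_a$ and $T_r$-eigenvector $G_a$; the assertion that the $T_a$-component of $\projlift[a]$ vanishes is exactly the equation $G_a=(T_r-T_a)\projlift[a]$, so ``ruling out the $T_a$-component'' is a restatement of the goal rather than progress toward it.

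The paper avoids this obstruction by taking a direct, non-inductive route. It applies the $\symm_{n+1}$-morphism $\Repl_{r,a}$ to the base-case identity for $\projlift[r]$, moves $\Repl_{r,a}$ past $\isoproj_\mu$ and past $\Rand_{n+1}$, and then trades $\Repl_{r,a}\circ\Int_r$ for a combination of $\Int_a$ and $\Int_r\circ\Repl_{r,a}$ using \autoref{ReplacementIntCommutation}. The resulting algebraic manipulation produces the commutator of $\projlift[a]$ directly, plus a common error term involving $\Int_r\circ\Repl_{r,a}$ that appears on both sides and cancels, with no eigenspace splitting or multiplicity argument needed. The missing ingredient in your version is precisely this kind of explicit relation between $\projlift[a]$ and $G_a$ furnished by the $\Int$/$\Repl$ commutation identities, not by the isotypic structure alone.
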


Note that the coefficient of $\projlift[a]$ above is equal to
the size of $\mu$ plus the diagonal index of the unique cell of $\mu/\lambda$.
In particular, the eigenvalue of $v$ is ``shifted'' by a positive integer.

\begin{proof}
    The case for $r = a$ follows immediately from \autoref{MainLemma},
    since the summation in \autoref{eq:MainLemma} is $0$.
    We will deduce the general case from the case $r = a$.

    For $\mu = \lambda + \vec e_r$, we have
    \begin{equation}
        \label{inductionhypothesis}
        \begin{multlined}
            \Rand_{n+1} \circ \projlift[r] - \projlift[r] \circ \Rand_n
            \\ \shoveleft[.5in]
            = \big((n+1) + (\lambda_r + 1) - r\big) \, \projlift[r].
        \end{multlined}
    \end{equation}
    Apply the $\symm_{n+1}$--morphism $\Repl_{r,a}$ to
    \autoref{inductionhypothesis}:
    \begin{equation}
        \label{RabEqn}
        \begin{multlined}
            \Rand_{n+1} \circ \Repl_{r,a} \circ \projlift[r]
            - \Repl_{r,a} \circ \projlift[r] \circ \Rand_n
            \\ \shoveleft[.5in]
            =
            \big((n+1) + (\lambda_r + 1) - r\big) \, \Repl_{r,a} \circ \projlift[r].
        \end{multlined}
    \end{equation}
    First consider the left-hand side of \autoref{RabEqn}.
    The operators $\isoproj_\mu$ commute with
    $\symm_{n+1}$--morphisms (see \autoref{eq:ProjectionsCommuteWithMorphisms}),
    so the left-hand side of \autoref{RabEqn} is equal to
    the restriction to $S^\lambda$ of
    \begin{equation*}
        \begin{multlined}
            \Rand_{n+1} \circ \Repl_{r,a} \circ \isoproj_\mu \circ \Int_r
            - \Repl_{r,a} \circ \isoproj_\mu \circ \Int_r \circ \Rand_n
            \\ \shoveleft[.5in]
            =
            \Rand_{n+1} \circ \isoproj_\mu \circ \Repl_{r,a} \circ \Int_r
            - \isoproj_\mu \circ \Repl_{r,a} \circ \Int_r \circ \Rand_n.
        \end{multlined}
    \end{equation*}
    By the identity
    $\Repl_{r,a} \circ \Int_r = \Int_a + \Int_r \circ \Repl_{r,a}$
    (\autoref{ReplacementIntCommutation}), the above is equal to
    \begin{equation*}
        \begin{multlined}
            \left(
                \Rand_{n+1} \circ \isoproj_\mu \circ \Int_a
                + \Rand_{n+1} \circ \isoproj_\mu \circ \Int_r \circ \Repl_{r,a}
            \right)
            \\ \shoveleft[.5in]
            - \left(
                \isoproj_\mu \circ \Int_a \circ \Rand_n
                + \isoproj_\mu \circ \Int_r \circ \Repl_{r,a} \circ \Rand_n
            \right).
        \end{multlined}
    \end{equation*}
    Since $\Repl_{r,a}$ is a $\symm_{n}$--module morphism,
    it commutes with $\Rand_n$, and so this becomes
    \begin{equation*}
        \begin{multlined}
            \left(
                \Rand_{n+1} \circ \isoproj_\mu \circ \Int_a
                - \isoproj_\mu \circ \Int_a \circ \Rand_n
            \right)
            \\ \shoveleft[.5in]
            + \left(
                \Rand_{n+1} \circ \isoproj_\mu \circ \Int_r
                - \isoproj_\mu \circ \Int_r \circ \Rand_n
            \right) \circ \Repl_{r,a}.
        \end{multlined}
    \end{equation*}
    Restricting the above to $S^\lambda$ and using the identity in
    \autoref{inductionhypothesis},
    the left-hand side of \autoref{RabEqn} is equal to
    \begin{equation*}
        \begin{multlined}
            \left(
                \Rand_{n+1} \circ \projlift[a] - \projlift[a] \circ \Rand_n
            \right)
            \\ \shoveleft[.5in]
            + \big( (n+1) + (\lambda_r + 1) - r \big) \projlift[r] \circ \Repl_{r,a}.
        \end{multlined}
    \end{equation*}

    Next consider the right-hand side of \autoref{RabEqn}.
    Combining \autoref{eq:ProjectionsAreMorphisms} with
    the fact that $\Repl_{r,a}$ is an $\symm_{n+1}$--module morphism
    and using \autoref{ReplacementIntCommutation},
    \begin{equation*}
        \begin{multlined}
            \big((n+1) + (\lambda_r + 1) - r\big) \, \Repl_{r,a} \circ \projlift[r]
            \\ \shoveleft[.5in] =
            \big((n+1) + (\lambda_r + 1) - r\big) \, \Repl_{r,a} \circ \isoproj_\mu \circ \Int_r \Big|_{S^\lambda}
            \\ \shoveleft[.5in] =
            \big((n+1) + (\lambda_r + 1) - r\big) \, \isoproj_\mu \circ \Repl_{r,a} \circ \Int_r \Big|_{S^\lambda}
            \\  \shoveleft[.5in] =
            \big((n+1) + (\lambda_r + 1) - r\big) \, \isoproj_\mu \circ \Int_a \Big|_{S^\lambda}
            \\ \shoveleft[.75in]
            + \big((n+1) + (\lambda_r + 1) - r\big) \, \isoproj \circ \Int_r \circ \Repl_{r,a} \Big|_{S^\lambda}
            \\ \shoveleft[.5in] =
            \big((n+1) + (\lambda_r + 1) - r\big) \, \projlift[a]
            \\ \shoveleft[.75in]
            + \big((n+1) + (\lambda_r + 1) - r\big) \, \projlift[r] \circ \Repl_{r,a}.
        \end{multlined}
    \end{equation*}
    The result then follows by cancelling the common term.
\end{proof}

\subsection{Reformulation of the projection morphism}
\label{reformulation-of-the-projection-morphism}

This section describes a simple method to compute
$\projliftoper_{i}^{\lambda, \lambda + \vec e_i}$
that uses only the morphisms $\Repl_{a,b}$ and $\Int_i$.
In particular, this method does not require the explicit computation of any
projections.

The starting point is the observation that the image of $S^\lambda$ under
$\Int_{i}$ is contained in a submodule of $M^{\lambda + \vec e_i}$ that is
isomorphic to
    $S^{\lambda + \vec e_1}
        \oplus S^{\lambda + \vec e_2}
        \oplus \cdots
        \oplus S^{\lambda + \vec e_i}$
(\autoref{l:SubmoduleGeneratedByLifts}).
Hence,
$\isoproj_{\lambda + \vec e_1}
+ \isoproj_{\lambda + \vec e_2}
+ \cdots + \isoproj_{\lambda + \vec e_i}$
is the identity on this submodule. Consequently,
if $\Int_i^\lambda = \Int_i|_{S^\lambda}$, then
\begin{align*}
    \Int_{i}^\lambda
    & =
    \left(
        \isoproj_{\lambda + \vec e_1}
        +
        \isoproj_{\lambda + \vec e_2}
        +
        \cdots
        +
        \isoproj_{\lambda + \vec e_i}
    \right) \circ \Int_{i}^\lambda
    \\
    & =
    \projliftoper_{i}^{\lambda, \lambda + \vec e_1}
    +
    \projliftoper_{i}^{\lambda, \lambda + \vec e_2}
    +
    \cdots
    +
    \projliftoper_{i}^{\lambda, \lambda + \vec e_i}
\end{align*}
so that
\begin{equation*}
    \projliftoper_{i}^{\lambda, \lambda + \vec e_i}
    =
    \Int_{i}^\lambda
    -
    \sum_{1 \leq j < i}
        \projliftoper_{i}^{\lambda, \lambda + \vec e_j}.
\end{equation*}

\begin{Lemma}
    \label{projlift-identity}
    Let $\lambda = (\lambda_1, \lambda_2, \dots, \lambda_l) \vdash n$
    and $1 \leq r < a \leq l + 1$. Define
    \begin{equation*}
        \gamma_{r, a} = (\lambda_r - r) - (\lambda_a - a).
    \end{equation*}
    Then
    \begin{align}
        & \projliftoper_{a}^{\lambda, \lambda + \vec e_r}
        \nonumber
        \\
        & \quad =
            \sum_{r \leq b < a}
            \frac{\Repl_{b,a}}{\gamma_{r,a}}
            \circ \projliftoper_{b}^{\lambda, \lambda + \vec e_r}
        \label{reformulation1}
        \\
        & \quad =
            \sum_{r = b_0 < b_1 < \cdots < b_t < b_{t+1} = a}
            \frac{\Repl_{b_{t}, b_{t+1}}}{\gamma_{b_0,b_{t+1}}}
            \circ
            \frac{\Repl_{b_{t-1}, b_{t}}}{\gamma_{b_0,b_{t}}}
            \circ
            \cdots
            \circ
            \frac{\Repl_{b_{0}, b_{1}}}{\gamma_{b_0,b_{1}}}
            \circ
            \projliftoper_{r}^{\lambda, \lambda + \vec e_{r}}.
        \label{reformulation2}
    \end{align}
\end{Lemma}

\begin{proof}
    \autoref{reformulation1}
    is a direct consequence of \autoref{MainLemma} and \autoref{liftingeigenvectors2}:
    \begin{align*}
        & \sum_{r \leq b < a} \Repl_{b,a} \circ \projliftoper_{b}^{\lambda, \lambda + \vec e_r}
        \\
        & =
            \left(
                \Rand_{n+1} \circ \projliftoper_{a}^{\lambda, \lambda + \vec e_r}
                - \projliftoper_{a}^{\lambda, \lambda + \vec e_r} \circ \Rand_n
            \right)
        & \text{(\autoref{MainLemma})}
        \\
        & \qquad \qquad
            - \Big((n + 1) + (\lambda_a + 1) - a\Big) \projliftoper_{a}^{\lambda, \lambda + \vec e_r}
        \\
        & =
        \Big((n + 1) + (\lambda_r + 1) - r\Big) \projliftoper_{a}^{\lambda, \lambda + \vec e_r}
        & \text{(\autoref{liftingeigenvectors2})}
        \\
        & \qquad \qquad
            - \Big((n + 1) + (\lambda_a + 1) - a\Big) \projliftoper_{a}^{\lambda, \lambda + \vec e_r}
        \\
        & = \Big((\lambda_r - r) - (\lambda_a - a)\Big) \projliftoper_{a}^{\lambda, \lambda + \vec e_r}.
        &
    \end{align*}
    \autoref{reformulation2} follows from
    repeated application of \autoref{reformulation1}.
\end{proof}

\begin{Proposition}
    \label{reformulation-of-morphisms}
    Let $\lambda = (\lambda_1, \lambda_2, \dots, \lambda_l)$ be a partition of
    $n$ and $1 \leq i \leq l+1$.
    \begin{equation*}
        \projliftoper_{i}^{\lambda, \lambda + \vec e_i}
        =
        \sum_{1 \leq b_1 < \cdots < b_t < b_{t+1} = i}
        \frac{\Repl_{b_{t}, b_{t+1}}}{\gamma_{i,b_{t}}}
        \circ
        \frac{\Repl_{b_{t-1}, b_{t}}}{\gamma_{i,b_{t-1}}}
        \circ
        \cdots
        \circ
        \frac{\Repl_{b_{1}, b_{2}}}{\gamma_{i,b_{1}}}
        \circ
        \Int_{b_1}^\lambda.
    \end{equation*}
\end{Proposition}

\begin{proof}
    We proceed by induction on $i$.

    \emph{Base cases: $i=1$ and $i=2$.}
    If $i = 1$, then
        $\projliftoper_{1}^{\lambda, \lambda + \vec e_1}
        = \Int_{1}^\lambda$.
    If $i = 2$, then
    \begin{equation*}
        \projliftoper_{2}^{\lambda, \lambda + \vec e_2}
        =
        \Int_{2}^\lambda
        -
        \projliftoper_{2}^{\lambda, \lambda + \vec e_1}.
    \end{equation*}
    Applying \autoref{projlift-identity} to
    $\projliftoper_{2}^{\lambda, \lambda + \vec e_1}$, we have
    \begin{align*}
        \projliftoper_{2}^{\lambda, \lambda + \vec e_2}
        & =
        \Int_{2}^\lambda
        -
        \sum_{1 \leq b < 2} \frac{\Repl_{b,2}}{\gamma_{1,2}} \circ \projliftoper_{b}^{\lambda, \lambda + \vec e_1}
        \\
        & =
        \Int_{2}^\lambda
        -
        \frac{\Repl_{1,2}}{\gamma_{1,2}} \circ \projliftoper_{1}^{\lambda, \lambda + \vec e_1}
        =
        \Int_{2}^\lambda
        +
        \frac{\Repl_{1,2}}{\gamma_{2,1}} \circ \Int_{1}^\lambda.
    \end{align*}

    \emph{General Case.} Suppose the result holds for
    $\projliftoper_{j}^{\lambda, \lambda + \vec e_j}$
    for $j < i$. Then
    \begin{align*}
        &
        \sum_{1 \leq j < i} \projliftoper_{i}^{\lambda, \lambda + \vec e_j}
        \\
        & =
        \sum_{\substack{
                1 \leq j < i \\
                j = d_0 < d_1 < \cdots < d_t < d_{t+1} = i
              }}
        \frac{\Repl_{d_{t}, d_{t+1}}}{\gamma_{j,d_{t+1}}}
        \circ
        \cdots
        \circ
        \frac{\Repl_{d_{0}, d_{1}}}{\gamma_{j,d_{1}}}
        \circ
        \projliftoper_{j}^{\lambda, \lambda + \vec e_{j}}
        \\
        & =
        \sum_{\substack{
                1 \leq j < i \\
                1 \leq c_1 < \cdots < c_{s} < c_{s+1} = j \\
                j = d_0 < d_1 < \cdots < d_{t} < d_{t+1} = i
              }}
        \frac{\Repl_{d_{t}, d_{t+1}}}{\gamma_{j,d_{t+1}}}
        \circ
        \cdots
        \circ
        \frac{\Repl_{d_{0}, d_{1}}}{\gamma_{j,d_{1}}}
        \circ
        \frac{\Repl_{c_{s}, c_{s+1}}}{\gamma_{j,c_{s}}}
        \circ
        \cdots
        \circ
        \frac{\Repl_{c_{1}, c_{2}}}{\gamma_{j,c_{1}}}
        \circ
        \Int_{c_1}^\lambda.
    \end{align*}
    We now re-write this sum as a sum over all sequences of the form
    \begin{equation*}
        1 \leq b_1 < \cdots < b_{u} < b_{u+1} = i.
    \end{equation*}
    Such a sequence can be expressed as
    \begin{equation*}
        1 \leq c_1 < \cdots < c_{s} < c_{s+1} = j
        = d_0 < d_1 < \cdots < d_{t} < d_{t+1} = i
    \end{equation*}
    in several ways
    (one for each term $b_k$ of the sequence)
    so that the coefficient of
    \begin{equation*}
        \Repl_{b_{u}, b_{u+1}}
        \circ
        \Repl_{b_{u-1}, b_{u}}
        \circ
        \cdots
        \circ
        \Repl_{b_{1}, b_{2}}
        \circ
        \Int_{b_1}^\lambda
    \end{equation*}
    in the above summation is
    \begin{align*}
        &
        \sum_{k}
        \left(
            \frac{1}{\gamma_{b_k, b_{1}}}
            \frac{1}{\gamma_{b_k, b_{2}}}
            \cdots
            \frac{1}{\gamma_{b_k, b_{k-1}}}
        \right)
        \cdot
        \left(
            \frac{1}{\gamma_{b_k, b_{k+1}}}
            \frac{1}{\gamma_{b_k, b_{k+2}}}
            \cdots
            \frac{1}{\gamma_{b_k, b_{u+1}}}
        \right).
    \end{align*}
    Applying \autoref{vandermonde-lemma} with $x_i = \lambda_i - i$,
    we have that this coefficient is equal to
    \begin{equation*}
        -
            \frac{1}{\gamma_{b_{u+1}, b_{1}}}
            \frac{1}{\gamma_{b_{u+1}, b_{2}}}
            \cdots
            \frac{1}{\gamma_{b_{u+1}, b_{u}}}.
            \qedhere
    \end{equation*}
\end{proof}

\begin{Lemma}
    \label{vandermonde-lemma}
    Let $x_0, x_1, x_2, \dots, x_u$ be distinct real numbers.
    Then
    \begin{equation*}
        \sum_{0 \leq k \leq u} \,
            \prod_{\substack{0 \leq j \leq u \\ j \neq k}} \frac{1}{x_k-x_j}
        = 0.
    \end{equation*}
\end{Lemma}

\begin{proof}
    We first prove that the expression is symmetric in $x_0, \dots, x_u$.
    Consider the effect of swapping $x_i$ and $x_{i+1}$.
    If $k \neq i, i+1$ then swapping $x_i$ and $x_{i+1}$ maps
    \begin{equation*}
        \prod_{\substack{0 \leq j \leq u \\ j \neq k}} \frac{1}{x_k-x_j}
        =
        \frac{1}{
        (x_k - x_0)
        (x_k - x_1)
        \cdots
        (x_k - x_{i})
        (x_k - x_{i+1})
        \cdots
        (x_k - x_u)
        }
    \end{equation*}
    to itself.
    If $k$ is $i$ or $i+1$, then swapping $x_i$ and $x_{i+1}$
    exchanges
    \begin{equation*}
        \prod_{\substack{0 \leq j \leq u \\ j \neq i}} \frac{1}{x_i-x_j}
        =
        \frac{1}{
        \cdots
        (x_i - x_{i-1})
        (x_i - x_{i+1})
        (x_i - x_{i+2})
        \cdots
        }
    \end{equation*}
    and
    \begin{equation*}
        \prod_{\substack{0 \leq j \leq u \\ j \neq i+1}} \frac{1}{x_{i+1}-x_j}
        =
        \frac{1}{
        \cdots
        (x_{i+1} - x_{i-1})
        (x_{i+1} - x_{i})
        (x_{i+1} - x_{i+2})
        \cdots
        }.
    \end{equation*}

    Let $V = \prod_{i < j}(x_i - x_j)$ denote the Vandermonde determinant.
    Since $V$ is anti-symmetric and the above expression is symmetric,
    their product is anti-symmetric.
    Moreover, the product is a polynomial of degree less than $\deg(V)$.
    It follows that the product is $0$, since $V$ divides every
    anti-symmetric polynomial in the variables $x_0, \dots, x_u$.
    Since $V$ is nonzero, it follows that the summation is $0$.
\end{proof}

\subsection{Image of $\Rand_n$}
\label{image-of-r2r}

\autoref{liftingeigenvectors2} proves that $\projlift$ maps
eigenvectors of $\Rand^{\lambda}$ to eigenvectors of $\Rand^{\mu}$.
More accurately, the eigenvectors of $\Rand^{\lambda}$ are mapped
to eigenvectors of $\Rand^{\mu}$ that do not lie in its kernel.

We now prove that all \emph{non-kernel} eigenvectors of $\Rand^\mu$ can be
obtained in this way: more generally, that every element in the image of
$\Rand^\mu$ is a linear combination of elements in the images of
$\projlift$.

\begin{Proposition}
    \label{imageR2R}
    For every partition $\lambda = (\lambda_1, \lambda_2, \dots, \lambda_l)$ of $n$,
    \begin{align*}
        \im\left(\Rand|_{S^\lambda}\right)
        \subseteq
        \im\left(\TopR|_{S^\lambda}\right)
        \subseteq
        \sum_{1 \leq a \leq l}
            \im\left(\isoproj_{\lambda} \circ \Int_a |_{S^{\lambda - \vec e_a}}\right).
    \end{align*}
\end{Proposition}

\begin{proof}
    We first show that
    $\im(\Rand|_{S^\lambda}) \subseteq \im(\TopR|_{S^\lambda})$.
    Recall from \autoref{r2r=t2r.r2t} that $\Rand_n = \TopR_n \circ \RTop_n$.
    If $v \in S^\lambda$, then $\RTop_n(v) \in S^\lambda$ since $\RTop_n$ is
    multiplication by an element of the group algebra of $\symm_n$.
    Hence, $\Rand_n(v) = \TopR_n(\RTop_n(v)) \in \TopR_n(S^\lambda)$.

    Also by \autoref{r2r=t2r.r2t}, for any word $w$ we have
    $\TopR_n(w) = \Int_{w_n}(w_1 \cdots w_{n-1})$.
    Rewrite this as
    \begin{align*}
        \TopR_n(w) = \sum_{1 \leq a \leq n} \left(\Int_a \circ \proj_a\right) (w),
    \end{align*}
    where $\proj_a(w_1\cdots w_n)$ is $w_1 \cdots w_{n-1}$
    if $w_n = a$ and is $0$ otherwise.
    Thus, for every $v \in S^\lambda$ we have
    \begin{align*}
        \left(\isoproj_\lambda \circ \TopR_n\right)(v)
        = \sum_{1 \leq a \leq n} \left(\isoproj_\lambda \circ \Int_a \circ \proj_a\right) (v).
    \end{align*}

    We will show that
    $\proj_a(S^\lambda) \subseteq S^{\lambda - \vec e_a}$,
    which implies that $\TopR(v)$
    is a linear combination of elements of the form
    $\left(\isoproj_\lambda \circ \Int_a^{n-1}\right)(v')$
    with $v' \in S^{\lambda - \vec e_a}$.

    Recall that the submodule $S^\lambda$ is spanned by the elements $\w_t$,
    where $t \in \SYT_\lambda$.
    Since $\w_t \cdot \sigma = \w_{\sigma\inv(t)}$ for any permutation $\sigma$, we
    can suppose that $n$ appears at the end of row $a$ of $t$.
    Let $\bar t$ denote the tableau obtained from $t$ by removing the cell
    containing $n$. Then $\word(t) = \word(\bar t) a$.
    Let $c_1, \dots, c_l, n$ be the entries in the column of $t$ containing
    $n$. The identity permutation together with the transpositions $(c_1,
    n)$, \dots, $(c_l, n)$ form a transversal of $\ColStab(\bar t)$ in
    $\ColStab(t)$. Therefore, $N_t$ factors as
    $N_t = \left(\mathid - (c_1, n) - \cdots - (c_l, n)\right)N_{\bar t}.$
    Hence,
    \begin{align*}
        \proj_a(\w_t) &= \proj_a\left(\word(t) \cdot N_t\right) \\
                      &= \proj_a\left(\word(\bar t) a \cdot N_{\bar t}\right)
                       - \sum_{i=1}^l \proj_a\left(\word(\bar t) a \cdot (c_i, n) N_{\bar t}\right) \\
                      &= \word(\bar t) \cdot N_{\bar t} = \w_{\bar t},
    \end{align*}
    where the second last equality follows from the fact that $(c_i, n)$
    swaps the last letter of $\word(\bar t) a$ with a different letter.
    This shows that $\proj_a(\w_t) \in S^{\lambda - \vec e_a}$
    for every tableau $t$ of shape $\lambda$.
\end{proof}

\subsection{Construction of eigenspaces from kernels of $\Rand_m$}
\label{construction-of-eigenspaces-from-kernels}

At this point, due to \autoref{liftingeigenvectors2} and \autoref{imageR2R},
we know that all the eigenvectors of the restriction
of $\Rand_{n}$ to $S^{\lambda}$, except those that lie in its kernel, can be
obtained from eigenvectors of $\Rand_{n-1}$
using the linear transformations
$\isoproj_{\lambda} \circ \Int_a$.
More precisely,
if $v \in S^\lambda$ is an eigenvector of $\Rand_n$ that does
not belong to $\ker \Rand_n$, then $v$ is of the form
\begin{equation*}
    v = \left(\isoproj_{\lambda} \circ \Int_{a_1}\right)\left(v^{(1)}\right)
    + \cdots
    + \left(\isoproj_{\lambda} \circ \Int_{a_r}\right)\left(v^{(r)}\right),
\end{equation*}
where $v^{(j)}$ is an eigenvector of $\Rand_{n-1}$ in some Specht
module $S^\mu$ with $\lambda = \mu + \vec e_{a_j}$.

If $v^{(j)} \notin \ker \Rand_{n-1}$, then we can apply the same reasoning to
write each $v^{(j)}$ in the above form.
Continuing in this way, we can express $v$ as a sum of images of elements in
the kernel of some $\Rand_{n-k}$ under maps of the form:
\begin{equation*}
    \big(\isoproj_\lambda \circ \Int_{r_k}\big)
    \circ \cdots \circ
    \big(\isoproj_{\nu + \vec e_{r_1} + \vec e_{r_2}} \circ \Int_{r_2}\big)
    \circ \big(\isoproj_{\nu + \vec e_{r_1}} \circ \Int_{r_1}\big)
    \Big|_{S^\nu},
\end{equation*}
where $\lambda = \nu + \vec e_{r_1} + \vec e_{r_2} + \cdots + \vec e_{r_k}$.
We study these maps and prove:
\begin{itemize}
    \item
        the intermediate projections are not necessary if the $r_i$ are
        appropriately ordered (\autoref{suffices-to-project-at-end}):
        if $r_1 \leq r_2 \leq \cdots \leq r_k$, then the above simplifies to
        \begin{equation*}
            \isoproj_\lambda \circ \Int_{r_k}
            \circ \cdots \circ \Int_{r_2} \circ \Int_{r_1}
            \Big|_{S^\nu};
        \end{equation*}

    \item
        these maps are zero if $\lambda/\nu$ is not a horizontal strip
        (\autoref{image-of-composition-of-shuffles});

    \item
        the eigenspace decomposition for $\Rand^\lambda$
        is obtained by applying the above maps to $\ker \Rand^\nu$;
        there is one subspace for each horizontal strip $\lambda/\nu$
        (\autoref{directsumdecomposition}).
\end{itemize}

Much of this depends on the following observation relating $\Int_{a_i}$,
$\Theta_{a,{a_i}}$ and the shuffle product, where $a$ is a new letter.

\begin{Lemma}
    \label{CompositionOfLiftings}
    Let $a_1, \dots, a_k$ be letters and let $a$ be a letter such that $a \neq
    a_i$ for all $1 \leq i \leq k$.
    If $w$ does not contain an occurrence of $a$, then
    \begin{align*}
        \left( \Int_{a_k} \circ \cdots \circ \Int_{a_2} \circ \Int_{a_1} \right) (w)
        &= w \shuffle a_1 \shuffle a_2 \shuffle \cdots \shuffle a_k \\
        &= \left( \Repl_{a, a_k} \circ \cdots \circ \Repl_{a, a_2} \circ \Repl_{a, a_1} \right)(w \shuffle aa \cdots a).
    \end{align*}
\end{Lemma}

\begin{proof}
    Proceed by induction on $k$. It holds for $k = 1$.
    Suppose it holds for $k-1$.
    \begin{align*}
        & k! \, \Repl_{a, a_1}\left(w \shuffle \overbrace{aa \cdots a}^k\right)
          = \Repl_{a, a_1}\left(w \shuffle \overbrace{a \shuffle a \shuffle \cdots \shuffle a}^k\right)
        \\
        & = \big(w \shuffle a_1 \shuffle a \shuffle \cdots \shuffle a\big)
           + \big(w \shuffle a \shuffle a_1 \shuffle \cdots \shuffle a\big)
           + \cdots
        \\
        & = k \, \left((w \shuffle a_1) \shuffle \overbrace{a \shuffle \cdots \shuffle a}^{k-1}\right)
          = k! \, \left((w \shuffle a_1) \shuffle \overbrace{a \cdots a}^{k-1}\right).
    \end{align*}
    The result then follows from the induction hypothesis.
\end{proof}

\begin{Proposition}
    \label{suffices-to-project-at-end}
    Suppose that $\lambda$ is a partition of $n$ obtained from $\mu \vdash n-k$
    by adding $k$ cells in rows $r_1 \leq r_2 \leq \dots \leq r_k$ (with
    repetition allowed). Then
    \begin{equation*}
        \begin{multlined}
            \isoproj_\lambda \circ \Int_{r_k} \circ \cdots \circ \Int_{r_2} \circ \Int_{r_1}
            \Big|_{S^\mu}
            \\ \shoveleft[.1in]
            =
            \big(\isoproj_\lambda \circ \Int_{r_k}\big)
            \circ \cdots \circ
            \big(\isoproj_{\mu + \vec e_{r_1} + \vec e_{r_2}} \circ \Int_{r_2}\big)
            \circ \big(\isoproj_{\mu + \vec e_{r_1}} \circ \Int_{r_1}\big)
            \Big|_{S^\mu}.
        \end{multlined}
    \end{equation*}
\end{Proposition}

\begin{proof}
    To simplify the notation in this proof, we temporarily define
    \begin{align*}
        \mu^{(j)} & = \mu + \vec e_{r_1} + \vec e_{r_2} + \cdots + \vec e_{r_j}
        \\
        \pi_j & = \isoproj_{\mu^{(j)}} =
            \isoproj_{\mu + \vec e_{r_1} + \vec e_{r_2} + \cdots + \vec e_{r_j}}.
    \end{align*}

    By \autoref{l:SubmoduleGeneratedByLifts}, the subspace
    $\Int_{r_1}(S^\mu)$ is contained in a submodule $T$
    of $M^{\mu + \vec e_{r_1}}$ that is isomorphic to
    $
        S^{\mu + \vec e_1}
        \oplus S^{\mu + \vec e_2}
        \oplus \cdots
        \oplus S^{\mu + \vec e_{r_1}}.
    $
    Thus,
    \begin{equation*}
        T = T_1 \oplus T_{2} \oplus \cdots \oplus T_{r_1},
    \end{equation*}
    where $T_i = \isoproj_{\mu + \vec e_i}(T)$ is the projection
    of $T$ onto its $(\mu + \vec e_i)$--isotypic component.
    This induces a decomposition of $\Int_{r_1}(S^\mu)$:
    \begin{equation*}
        \Int_{r_1}\left(S^\mu\right)
        = N_1
        \oplus N_2
        \oplus \cdots
        \oplus N_{r_1},
    \end{equation*}
    where $N_i = \Int_{r_1}(S^\mu) \cap T_i = \isoproj_{\mu + \vec e_i}\left(\Int_{r_1}(S^\mu)\right)$.

    For every $u \in S^\mu$, write $\Int_{r_1}(u) = t_1 + t_2 + \cdots + t_{r_1}$ with $t_i \in N_i$.
    So, if we show
    \begin{equation*}
        \pi_j \Big( \left(\Int_{r_j} \circ \cdots \circ \Int_{r_2}\right)(N_i) \Big) = 0
    \end{equation*}
    for every $i < r_1$ and $j \geq 2$, then
    \begin{equation*}
        \begin{aligned}
            \pi_j \Big(
                \left(\Int_{r_j} \circ \cdots \circ \Int_{r_2}\right)\left(\Int_{r_1}(u)\right)
            \Big)
            &=
            \pi_j \Big(
                \left(\Int_{r_j} \circ \cdots \circ \Int_{r_2}\right)\left(t_{r_1}\right)
            \Big)
            \\
            &=
            \left(\pi_j \circ \Int_{r_j} \circ \cdots \circ \Int_{r_2}\right)\left(\pi_1(\Int_{r_1}(u))\right).
        \end{aligned}
    \end{equation*}
    The result then follows by induction on $j$ since
    $\pi_1(\Int_{r_1}(u)) \in S^{\mu + \vec e_{r_1}}$.

    Let $i < r_1$ and $j \geq 2$.
    By \autoref{CompositionOfLiftings},
    \begin{equation*}
        \left(\Int_{r_j} \circ \cdots \circ \Int_{r_2}\right)(N_i)
        \subseteq
        \left(\Theta_{a, r_j} \circ \cdots \circ \Theta_{a, r_2}\right)
        \left(N\right),
    \end{equation*}
    where $N$ is the $\symm_n$--submodule generated by
    \begin{equation*}
        \Big\{ v \cdot \overbrace{aa \cdots a}^{\text{~$j-1$}} : v \in S^{\mu + \vec e_i} \Big\}.
    \end{equation*}
    We will show that $N$ has no submodule
    that is isomorphic to $S^{\mu^{(j)}}$, which
    implies that
    $\left(\Theta_{a, r_j} \circ \cdots \circ \Theta_{a, r_2}\right)
    \left(N\right)$
    has no submodule isomorphic to $S^{\mu^{(j)}}$,
    since each $\Theta_{a, r_k}$ is a $\symm_n$--module morphism.

    Suppose $N$ contains a submodule that is isomorphic to $S^{\mu^{(j)}}$.
    Since
    \begin{equation*}
        N \cong \Ind_{\symm_{n-j+1} \times \symm_{j-1}}^{\symm_{n}}
                    \left(S^{\mu + \vec e_{i}} \otimes S^{(j-1)}\right),
    \end{equation*}
    the branching rule for the symmetric groups implies that ${\mu^{(j)}}$ is
    obtained from $\mu + \vec e_i$ by adding $j-1$ cells.
    Hence,
    ${\mu^{(j)}} = \mu + \vec e_i + \vec e_{i_2} + \vec e_{i_3} + \cdots + \vec e_{i_j}$,
    or equivalently
    \begin{equation*}
        \vec e_{r_1} + \vec e_{r_2} + \vec e_{r_3} + \cdots + \vec e_{r_j}
        =
        \vec e_i + \vec e_{i_2} + \vec e_{i_3} + \cdots + \vec e_{i_j}.
    \end{equation*}
    Since $i < r_1$, it follows that there exists $l \in \{2, 3, \dots, j\}$
    such that $i = r_l$. This leads to the contradiction:
    $i < r_1 \leq r_l = i$.
    Hence, $N$ does not contain a submodule that is isomorphic
    to $S^{\mu^{(j)}}$.
\end{proof}

For a skew partition $\lambda / \nu$ with $\lambda$ obtained from $\nu$ by
adding $k$ cells in rows $r_1 \leq r_2 \leq \dots \leq r_k$, write
\begin{gather*}
    \Int^{\lambda/\nu} = \Int_{r_k} \circ \cdots \circ \Int_{r_2} \circ \Int_{r_1}.
\end{gather*}
The next result proves that $\isoproj_\lambda \circ \Int^{\lambda/\nu} = 0$
if $\lambda/\nu$ is not a horizontal strip.

\begin{Remark}
    The definition of $\Int^{\lambda / \nu}$ does not depend on the ordering
    $r_1, r_2, \dots, r_k$ since $\Int_a \circ \Int_b = \Int_b \circ \Int_a$
    for all $a, b$. However, the identity in the
    \autoref{suffices-to-project-at-end} is not independent of the ordering of
    $r_1, r_2, \dots, r_k$: the identity does not hold in general if we do not
    have $r_1 \leq r_2 \leq \dots \leq r_k$.
\end{Remark}

\begin{Proposition}
    \label{image-of-composition-of-shuffles}
    Suppose that $\lambda$ is a partition of $n$ obtained from $\nu \vdash n-k$
    by adding $k$ cells in rows $r_1, r_2, \dots, r_k$ (with repetition
    allowed). Then
    \begin{align*}
        \left( \Int_{r_k} \circ \cdots \circ \Int_{r_2} \circ \Int_{r_1} \right) (S^\nu)
    \end{align*}
    is contained in a submodule of $M^{\lambda}$
    that is a homomorphic image of the $\symm_n$--module
    $\Ind_{\symm_{n-k} \times \symm_k}^{\symm_n}(S^\nu \otimes S^{(k)})$.
    In particular, if
    two of the cells $r_1, r_2, \dots, r_k$
    lie in the same column, then
    \begin{align*}
        S^\lambda \cap \left( \Int_{r_k} \circ \cdots \circ \Int_{r_2} \circ \Int_{r_1} \right) (S^\nu)
        = 0.
    \end{align*}
    Consequently,
    \begin{align*}
        \isoproj_\lambda \circ \Int_{r_k} \circ \cdots \circ \Int_{r_2} \circ \Int_{r_1} \Big|_{S^\nu} = 0.
    \end{align*}
\end{Proposition}

\begin{proof}
    Let $N$ be the $\symm_n$--submodule generated by the elements
    $\left\{ x \cdot aa \cdots a : x \in S^\nu \right\}$,
    where $a = \ell(\nu) + 1$.
    Then, by \autoref{CompositionOfLiftings},
    $\left( \Int_{r_k} \circ \cdots \circ \Int_{r_1} \right) (S^\nu)$
    is contained in
    $\left( \Repl_{a, r_k} \circ \cdots \circ \Repl_{a, r_1} \right)(N)$.
    Since
    $N \cong \Ind_{\symm_{n-k} \times \symm_k}^{\symm_n}(S^\nu \otimes S^{(k)})$
    \cite{James1978}, this proves the first statement.

    For the second statement, recall that the branching rule for the symmetric
    groups says that $S^\lambda$ is isomorphic to a direct summand of
    $\Ind_{\symm_{n-k} \times \symm_k}^{\symm_n}(S^\nu \otimes S^{(k)})$
    if and only if no two cells of the skew partition $\lambda / \nu$ are contained in the
    same column.
\end{proof}

We are now ready to prove the essential and final part of
\autoref{thm:eigenspace-decomposition}.

\begin{Proposition}
    \label{directsumdecomposition}
    Let $\lambda$ be a partition of $n$. Then
    \begin{gather*}
        S^\lambda
        = \bigoplus_{
            \substack{
                \lambda/\nu \text{~is a} \\
                \text{horizontal strip}
            }
        }
        (\isoproj_\lambda \circ \Int^{\lambda/\nu})\left(\ker \Rand^\nu \right)
    \end{gather*}
    and the action of $\Rand_n$ on
        $(\isoproj_\lambda \circ \Int^{\lambda/\nu})\left(\ker \Rand^\nu \right)$
    is scalar multiplication by
    \begin{equation*}
        \begin{aligned}
            \eig\left(\lambda/\nu\right)
            & =
            \binom{|\lambda|+1}{2} - \binom{|\nu|+1}{2}
                + \diagonalindex\left(\lambda/\nu\right).
        \end{aligned}
    \end{equation*}
    Consequently, the $\varepsilon$--eigenspace of $\Rand_n$ acting on
    $S^\lambda$ is
    \begin{gather*}
        \bigoplus_{
            \substack{
                \eig(\lambda/\nu) = \varepsilon \text{~and~} \\
                \lambda/\nu \text{~is a horizontal strip}
            }
        }
        (\isoproj_\lambda \circ \Int^{\lambda/\nu})\left(\ker \Rand^\nu \right)
    \end{gather*}
    whose dimension is
    \begin{gather*}
        \sum_{
            \substack{
                \eig(\lambda/\nu) = \varepsilon \text{~and~} \\
                \lambda/\nu \text{~is a horizontal strip}
            }
        }
        \ndestab^\nu,
    \end{gather*}
    where $\ndestab^\nu$ is the number of desarrangement tableaux of shape $\nu$.
\end{Proposition}

\begin{proof}
    The second statement follows from
    \autoref{suffices-to-project-at-end} and \autoref{liftingeigenvectors2}.

    The argument preceding \autoref{suffices-to-project-at-end} shows that we can
    write
    \begin{gather*}
        S^\lambda
        = \sum_{\nu \subseteq \lambda} (\isoproj_\lambda \circ \Int^{\lambda/\nu}) \left(\ker \Rand^\nu \right).
    \end{gather*}
    By \autoref{image-of-composition-of-shuffles},
    the restriction of $\isoproj_\lambda \circ \Int^{\lambda/\nu}$ to $S^\nu$
    is zero if $\lambda/\nu$ is not a horizontal strip.
    We prove this sum is direct via a dimension counting argument.

    Let $f^\lambda$ denote the number of standard tableaux of shape $\lambda$.
    Then
    \begin{align*}
        f^\lambda
        = \dim \left(S^\lambda\right)
        = \sum_{\lambda / \nu} \dim\left(\isoproj_\lambda \circ \Int^{\lambda/\nu} (\ker \Rand^\nu)\right)
        \leq \sum_{\lambda / \nu} \dim \left(\ker \Rand^\nu\right).
    \end{align*}
    By \autoref{lemma:kernel-dimensions}, the dimension of $\ker \Rand^\nu$ is
    $\ndestab^\nu$, the number of desarrangement tableaux of shape $\nu$.
    Hence, the right hand side above is the number of desarrangement tableaux
    $Q$ such that $\lambda / \shape(Q)$ is a horizontal strip.
    By \cite[Proposition~VI.9.4]{RSW2014}, there is a bijection between these tableaux and
    the set of standard tableaux of shape $\lambda$.
    Hence, the above inequality is an equality.
    \qedhere

\end{proof}

\newpage
\section{Appendix: Tables of random-to-random eigenvalues}
\label{appendix:figures}

This section contains tables with the eigenvalues (scaled by $n^2$) for the
random-to-random shuffle acting on permutations of size $n \leq 6$
(\cf \autoref{thm:eigenvalues-of-R2R}).

\begin{center}
    \resizebox*{\textwidth}{!}{
    \begin{tabular}{c|ccc|cccc}
        \toprule
        \eigenvaluestableheader \\ \midrule
        $\tikztableauscript{{\null,\null},}$ &   1 &   1 &   1 &   3 &   0 &   1 &   4 \\ \midrule
        $      \tikztableauscript{{X},{X},}$ &   1 &   1 &   1 &   3 &   3 &   0 &   0 \\ \bottomrule
    \end{tabular}
    }

    \bigskip

    \resizebox*{\textwidth}{!}{
    \begin{tabular}{c|ccc|cccc}
        \toprule
        \eigenvaluestableheader \\ \midrule
        $\tikztableauscript{{\null,\null,\null},}$ &   1 &   1 &   1 &   6 &   0 &   0 &   9 \\\midrule
        $      \tikztableauscript{{X,\null},{X},}$ &   1 &   2 &   2 &   6 &   3 &   0 &   4 \\
        $          \tikztableauscript{{X,X},{X},}$ &   1 &   2 &   2 &   6 &   6 &   0 &   0 \\\midrule
        $    \tikztableauscript{{X},{X},{\null},}$ &   1 &   1 &   1 &   6 &   3 &   0 &   1 \\\bottomrule
    \end{tabular}
    }

    \bigskip

    \resizebox*{\textwidth}{!}{
    \begin{tabular}{c|ccc|cccc}
        \toprule
        \eigenvaluestableheader \\ \midrule
        $\tikztableauscript{{\null,\null,\null,\null},}$ &   1 &   1 &   1 &  10 &   0 &   0 &  16 \\\midrule
        $      \tikztableauscript{{X,\null,\null},{X},}$ &   1 &   3 &   3 &  10 &   3 &   0 &  10 \\
        $          \tikztableauscript{{X,X,\null},{X},}$ &   1 &   3 &   3 &  10 &   6 &   0 &   6 \\
        $              \tikztableauscript{{X,X,X},{X},}$ &   1 &   3 &   3 &  10 &  10 &   0 &   0 \\\midrule
        $          \tikztableauscript{{X,X},{X,\null},}$ &   1 &   2 &   2 &  10 &   6 &   0 &   4 \\
        $              \tikztableauscript{{X,X},{X,X},}$ &   1 &   2 &   2 &  10 &  10 &   0 &   0 \\\midrule
        $    \tikztableauscript{{X,\null},{X},{\null},}$ &   1 &   3 &   3 &  10 &   3 &   0 &   6 \\
        $        \tikztableauscript{{X,X},{X},{\null},}$ &   1 &   3 &   3 &  10 &   6 &   0 &   2 \\
        $            \tikztableauscript{{X,X},{X},{X},}$ &   1 &   3 &   3 &  10 &  10 &   0 &   0 \\\midrule
        $          \tikztableauscript{{X},{X},{X},{X},}$ &   1 &   1 &   1 &  10 &  10 &   0 &   0 \\\bottomrule
    \end{tabular}
    }
\end{center}

\begin{figure}[p]
    \resizebox*{!}{0.9\textheight}{
        \begin{tabular}{c|ccc|cccc}
            \toprule
            \eigenvaluestableheader \\ \midrule
            $\tikztableauscript{{\null,\null,\null,\null,\null},}$ &   1 &   1 &   1 &  15 &   0 &   0 &  25 \\\midrule
            $      \tikztableauscript{{X,\null,\null,\null},{X},}$ &   1 &   4 &   4 &  15 &   3 &   0 &  18 \\
            $          \tikztableauscript{{X,X,\null,\null},{X},}$ &   1 &   4 &   4 &  15 &   6 &   0 &  14 \\
            $              \tikztableauscript{{X,X,X,\null},{X},}$ &   1 &   4 &   4 &  15 &  10 &   0 &   8 \\
            $                  \tikztableauscript{{X,X,X,X},{X},}$ &   1 &   4 &   4 &  15 &  15 &   0 &   0 \\\midrule
            $          \tikztableauscript{{X,X,\null},{X,\null},}$ &   1 &   5 &   5 &  15 &   6 &   0 &  11 \\
            $              \tikztableauscript{{X,X,\null},{X,X},}$ &   1 &   5 &   5 &  15 &  10 &   0 &   7 \\
            $              \tikztableauscript{{X,X,X},{X,\null},}$ &   1 &   5 &   5 &  15 &  10 &   0 &   5 \\
            $                  \tikztableauscript{{X,X,X},{X,X},}$ &   2 &   5 &  10 &  15 &  15 &   0 &   0 \\\midrule
            $    \tikztableauscript{{X,\null,\null},{X},{\null},}$ &   1 &   6 &   6 &  15 &   3 &   0 &  13 \\
            $        \tikztableauscript{{X,X,\null},{X},{\null},}$ &   1 &   6 &   6 &  15 &   6 &   0 &   9 \\
            $            \tikztableauscript{{X,X,\null},{X},{X},}$ &   1 &   6 &   6 &  15 &  10 &   0 &   7 \\
            $            \tikztableauscript{{X,X,X},{X},{\null},}$ &   1 &   6 &   6 &  15 &  10 &   0 &   3 \\
            $                \tikztableauscript{{X,X,X},{X},{X},}$ &   2 &   6 &  12 &  15 &  15 &   0 &   0 \\\midrule
            $        \tikztableauscript{{X,X},{X,\null},{\null},}$ &   1 &   5 &   5 &  15 &   6 &   0 &   7 \\
            $            \tikztableauscript{{X,X},{X,\null},{X},}$ &   1 &   5 &   5 &  15 &  10 &   0 &   5 \\
            $            \tikztableauscript{{X,X},{X,X},{\null},}$ &   1 &   5 &   5 &  15 &  10 &   0 &   3 \\
            $                \tikztableauscript{{X,X},{X,X},{X},}$ &   2 &   5 &  10 &  15 &  15 &   0 &   0 \\\midrule
            $          \tikztableauscript{{X,\null},{X},{X},{X},}$ &   1 &   4 &   4 &  15 &  10 &   0 &   6 \\
            $          \tikztableauscript{{X,X},{X},{X},{\null},}$ &   1 &   4 &   4 &  15 &  10 &   0 &   2 \\
            $              \tikztableauscript{{X,X},{X},{X},{X},}$ &   2 &   4 &   8 &  15 &  15 &   0 &   0 \\\midrule
            $        \tikztableauscript{{X},{X},{X},{X},{\null},}$ &   1 &   1 &   1 &  15 &  10 &   0 &   1 \\\bottomrule
        \end{tabular}
    }
    \caption{Eigenvalues (scaled by $n^2$) for the random-to-random shuffle
        acting on permutations of size $n = 5$.}
    \label{fig:eigenvalues-of-R2R5}
\end{figure}

\begin{figure}[p]
    \resizebox*{!}{0.9\textheight}{
        \begin{tabular}{c|ccc|cccc}
            \toprule
            \eigenvaluestableheader \\ \midrule
            $\tikztableauscript{{\null,\null,\null,\null,\null,\null},}$ &   1 &   1 &   1 &  21 &   0 &   0 &  36 \\\midrule
            $      \tikztableauscript{{X,\null,\null,\null,\null},{X},}$ &   1 &   5 &   5 &  21 &   3 &   0 &  28 \\
            $          \tikztableauscript{{X,X,\null,\null,\null},{X},}$ &   1 &   5 &   5 &  21 &   6 &   0 &  24 \\
            $              \tikztableauscript{{X,X,X,\null,\null},{X},}$ &   1 &   5 &   5 &  21 &  10 &   0 &  18 \\
            $                  \tikztableauscript{{X,X,X,X,\null},{X},}$ &   1 &   5 &   5 &  21 &  15 &   0 &  10 \\
            $                      \tikztableauscript{{X,X,X,X,X},{X},}$ &   1 &   5 &   5 &  21 &  21 &   0 &   0 \\\midrule
            $          \tikztableauscript{{X,X,\null,\null},{X,\null},}$ &   1 &   9 &   9 &  21 &   6 &   0 &  20 \\
            $              \tikztableauscript{{X,X,\null,\null},{X,X},}$ &   1 &   9 &   9 &  21 &  10 &   0 &  16 \\
            $              \tikztableauscript{{X,X,X,\null},{X,\null},}$ &   1 &   9 &   9 &  21 &  10 &   0 &  14 \\
            $                  \tikztableauscript{{X,X,X,\null},{X,X},}$ &   2 &   9 &  18 &  21 &  15 &   0 &   9 \\
            $                  \tikztableauscript{{X,X,X,X},{X,\null},}$ &   1 &   9 &   9 &  21 &  15 &   0 &   6 \\
            $                      \tikztableauscript{{X,X,X,X},{X,X},}$ &   3 &   9 &  27 &  21 &  21 &   0 &   0 \\\midrule
            $    \tikztableauscript{{X,\null,\null,\null},{X},{\null},}$ &   1 &  10 &  10 &  21 &   3 &   0 &  22 \\
            $        \tikztableauscript{{X,X,\null,\null},{X},{\null},}$ &   1 &  10 &  10 &  21 &   6 &   0 &  18 \\
            $            \tikztableauscript{{X,X,\null,\null},{X},{X},}$ &   1 &  10 &  10 &  21 &  10 &   0 &  16 \\
            $            \tikztableauscript{{X,X,X,\null},{X},{\null},}$ &   1 &  10 &  10 &  21 &  10 &   0 &  12 \\
            $                \tikztableauscript{{X,X,X,\null},{X},{X},}$ &   2 &  10 &  20 &  21 &  15 &   0 &   9 \\
            $                \tikztableauscript{{X,X,X,X},{X},{\null},}$ &   1 &  10 &  10 &  21 &  15 &   0 &   4 \\
            $                    \tikztableauscript{{X,X,X,X},{X},{X},}$ &   3 &  10 &  30 &  21 &  21 &   0 &   0 \\\midrule
            $              \tikztableauscript{{X,X,X},{X,\null,\null},}$ &   1 &   5 &   5 &  21 &  10 &   0 &  12 \\
            $                  \tikztableauscript{{X,X,X},{X,X,\null},}$ &   2 &   5 &  10 &  21 &  15 &   0 &   7 \\
            $                      \tikztableauscript{{X,X,X},{X,X,X},}$ &   2 &   5 &  10 &  21 &  21 &   0 &   0 \\\bottomrule
        \end{tabular}
    }
    \caption{Eigenvalues (scaled by $n^2$) for the random-to-random shuffle
        acting on permutations of size $n = 6$; continues in
        \autoref{fig:eigenvalues-of-R2R6b} and \autoref{fig:eigenvalues-of-R2R6c}.}
    \label{fig:eigenvalues-of-R2R6a}
\end{figure}

\begin{figure}[p]
    \resizebox*{!}{0.9\textheight}{
        \begin{tabular}{c|ccc|cccc}
            \toprule
            \eigenvaluestableheader \\ \midrule
            $\tikztableauscript{{X,X,\null},{X,\null},{\null},}$ &   1 &  16 &  16 &  21 &   6 &   0 &  15 \\
            $    \tikztableauscript{{X,X,\null},{X,\null},{X},}$ &   1 &  16 &  16 &  21 &  10 &   0 &  13 \\
            $    \tikztableauscript{{X,X,\null},{X,X},{\null},}$ &   1 &  16 &  16 &  21 &  10 &   0 &  11 \\
            $    \tikztableauscript{{X,X,X},{X,\null},{\null},}$ &   1 &  16 &  16 &  21 &  10 &   0 &   9 \\
            $        \tikztableauscript{{X,X,\null},{X,X},{X},}$ &   2 &  16 &  32 &  21 &  15 &   0 &   8 \\
            $        \tikztableauscript{{X,X,X},{X,\null},{X},}$ &   2 &  16 &  32 &  21 &  15 &   0 &   6 \\
            $        \tikztableauscript{{X,X,X},{X,X},{\null},}$ &   2 &  16 &  32 &  21 &  15 &   0 &   4 \\
            $            \tikztableauscript{{X,X,X},{X,X},{X},}$ &   6 &  16 &  96 &  21 &  21 &   0 &   0 \\\midrule
            $  \tikztableauscript{{X,\null,\null},{X},{X},{X},}$ &   1 &  10 &  10 &  21 &  10 &   0 &  14 \\
            $  \tikztableauscript{{X,X,\null},{X},{X},{\null},}$ &   1 &  10 &  10 &  21 &  10 &   0 &  10 \\
            $      \tikztableauscript{{X,X,\null},{X},{X},{X},}$ &   2 &  10 &  20 &  21 &  15 &   0 &   8 \\
            $      \tikztableauscript{{X,X,X},{X},{X},{\null},}$ &   2 &  10 &  20 &  21 &  15 &   0 &   3 \\
            $          \tikztableauscript{{X,X,X},{X},{X},{X},}$ &   4 &  10 &  40 &  21 &  21 &   0 &   0 \\\bottomrule
        \end{tabular}
    }
    \caption{Eigenvalues (scaled by $n^2$) for the random-to-random shuffle
        acting on permutations of size $n = 6$; continues in
        \autoref{fig:eigenvalues-of-R2R6a} and \autoref{fig:eigenvalues-of-R2R6c}.}
    \label{fig:eigenvalues-of-R2R6b}
\end{figure}

\begin{figure}[p]
    \resizebox*{!}{0.9\textheight}{
        \begin{tabular}{c|ccc|cccc}
            \toprule
            \eigenvaluestableheader \\ \midrule
            $    \tikztableauscript{{X,X},{X,X},{\null,\null},}$ &   1 &   5 &   5 &  21 &  10 &   0 &   8 \\
            $        \tikztableauscript{{X,X},{X,X},{X,\null},}$ &   2 &   5 &  10 &  21 &  15 &   0 &   5 \\
            $            \tikztableauscript{{X,X},{X,X},{X,X},}$ &   2 &   5 &  10 &  21 &  21 &   0 &   0 \\\midrule
            $  \tikztableauscript{{X,X},{X,\null},{X},{\null},}$ &   1 &   9 &   9 &  21 &  10 &   0 &   8 \\
            $      \tikztableauscript{{X,X},{X,\null},{X},{X},}$ &   2 &   9 &  18 &  21 &  15 &   0 &   6 \\
            $      \tikztableauscript{{X,X},{X,X},{X},{\null},}$ &   2 &   9 &  18 &  21 &  15 &   0 &   3 \\
            $          \tikztableauscript{{X,X},{X,X},{X},{X},}$ &   4 &   9 &  36 &  21 &  21 &   0 &   0 \\\midrule
            $\tikztableauscript{{X,\null},{X},{X},{X},{\null},}$ &   1 &   5 &   5 &  21 &  10 &   0 &   8 \\
            $    \tikztableauscript{{X,X},{X},{X},{X},{\null},}$ &   2 &   5 &  10 &  21 &  15 &   0 &   2 \\
            $        \tikztableauscript{{X,X},{X},{X},{X},{X},}$ &   2 &   5 &  10 &  21 &  21 &   0 &   0 \\\midrule
            $      \tikztableauscript{{X},{X},{X},{X},{X},{X},}$ &   1 &   1 &   1 &  21 &  21 &   0 &   0 \\\bottomrule
        \end{tabular}
    }
    \caption{Eigenvalues (scaled by $n^2$) for the random-to-random shuffle
        acting on permutations of size $n = 6$; continues in
        \autoref{fig:eigenvalues-of-R2R6a} and \autoref{fig:eigenvalues-of-R2R6b}.}
    \label{fig:eigenvalues-of-R2R6c}
\end{figure}

\newpage

\bibliographystyle{amsalpha}
\bibliography{references}

\def\Dbar{\leavevmode\lower.6ex\hbox to 0pt{\hskip-.23ex \accent"16\hss}D}
\providecommand{\bysame}{\leavevmode\hbox to3em{\hrulefill}\thinspace}
\providecommand{\MR}{\relax\ifhmode\unskip\space\fi MR }
\providecommand{\MRhref}[2]{%
  \href{http://www.ams.org/mathscinet-getitem?mr=#1}{#2}
}
\providecommand{\href}[2]{#2}
\begin{thebibliography}{MSS15b}

\bibitem[AST17]{AyyerSchillingThiery}
Arvind Ayyer, Anne Schilling, and Nicolas~M. Thi\'ery, \emph{Spectral gap for
  random-to-random shuffling on linear extensions}, Exp. Math. \textbf{26}
  (2017), no.~1, 22--30. \MR{3599002}

\bibitem[BD98]{BrownDiaconis1998}
Kenneth~S. Brown and Persi Diaconis, \emph{Random walks and hyperplane
  arrangements}, Ann. Probab. \textbf{26} (1998), no.~4, 1813--1854.
  \MR{MR1675083 (2000k:60138)}

\bibitem[BHR99]{BHR1999}
Pat Bidigare, Phil Hanlon, and Dan Rockmore, \emph{A combinatorial description
  of the spectrum for the {T}setlin library and its generalization to
  hyperplane arrangements}, Duke Math. J. \textbf{99} (1999), no.~1, 135--174.
  \MR{MR1700744 (2000m:52032)}

\bibitem[BN17]{BernsteinNestoridi2017}
M.~{Bernstein} and E.~{Nestoridi}, \emph{{Cutoff for random to random card
  shuffle}}, ArXiv e-prints (2017).

\bibitem[Bro00]{Brown2000}
Kenneth~S. Brown, \emph{Semigroups, rings, and {M}arkov chains}, J. Theoret.
  Probab. \textbf{13} (2000), no.~3, 871--938. \MR{MR1785534 (2001e:60141)}

\bibitem[Bro04]{Brown2004}
\bysame, \emph{Semigroup and ring theoretical methods in probability},
  Representations of finite dimensional algebras and related topics in {L}ie
  theory and geometry, Fields Inst. Commun., vol.~40, Amer. Math. Soc.,
  Providence, RI, 2004, pp.~3--26.

\bibitem[BW83]{BjornerWachs1983}
Anders Bj{\"o}rner and Michelle Wachs, \emph{On lexicographically shellable
  posets}, Trans. Amer. Math. Soc. \textbf{277} (1983), no.~1, 323--341.
  \MR{690055 (84f:06004)}

\bibitem[CLR10]{CaputoLiggettRichthammer2010}
Pietro Caputo, Thomas~M. Liggett, and Thomas Richthammer, \emph{Proof of
  {A}ldous' spectral gap conjecture}, J. Amer. Math. Soc. \textbf{23} (2010),
  no.~3, 831--851. \MR{2629990 (2011k:60316)}

\bibitem[D{\'e}s82]{Desarmenien1982}
Jacques D{\'e}sarm{\'e}nien, \emph{Une autre interpr\'etation du nombre de
  d\'erangements}, S\'eminaire Lotharingien de Combinatoire \textbf{B08b}
  (1982), 6.

\bibitem[Dev15]{Sage}
The~Sage Developers, \emph{{S}age {M}athematics {S}oftware ({V}ersion 6.8)},
  2015, {\tt http://www.sagemath.org}.

\bibitem[Dia88]{DiaconisBook}
Persi Diaconis, \emph{Group representations in probability and statistics},
  Institute of Mathematical Statistics Lecture Notes---Monograph Series, 11,
  Institute of Mathematical Statistics, Hayward, CA, 1988. \MR{964069
  (90a:60001)}

\bibitem[Die10]{Dieker2010}
A.~B. Dieker, \emph{Interlacings for random walks on weighted graphs and the
  interchange process}, SIAM J. Discrete Math. \textbf{24} (2010), no.~1,
  191--206. \MR{2600660 (2011d:60230)}

\bibitem[DS81]{DiaconisShahshahani1981}
Persi Diaconis and Mehrdad Shahshahani, \emph{Generating a random permutation
  with random transpositions}, Z. Wahrsch. Verw. Gebiete \textbf{57} (1981),
  no.~2, 159--179. \MR{626813 (82h:60024)}

\bibitem[DSC95]{DiaconisSaloffCoste1995}
P.~Diaconis and L.~Saloff-Coste, \emph{Random walks on finite groups: a survey
  of analytic techniques}, Probability measures on groups and related
  structures, {XI} ({O}berwolfach, 1994), World Sci. Publ., River Edge, NJ,
  1995, pp.~44--75. \MR{1414925 (97k:60013)}

\bibitem[DW88]{DesarmenienWachs1988}
Jacques D{\'e}sarm{\'e}nien and Michelle~L. Wachs, \emph{Descentes des
  d\'erangements et mots circulaires}, S\'eminaire Lotharingien de Combinatoire
  \textbf{B19a} (1988), 9 pages.

\bibitem[DW93]{DesarmenienWachs1993}
\bysame, \emph{Descent classes of permutations with a given number of fixed
  points}, J. Combin. Theory Ser. A \textbf{64} (1993), no.~2, 311--328.
  \MR{1245164 (95c:05008)}

\bibitem[Far79]{Farmer1978}
Frank~D. Farmer, \emph{Cellular homology for posets}, Math. Japon. \textbf{23}
  (1978/79), no.~6, 607--613. \MR{529895 (82k:18013)}

\bibitem[HH04]{HanlonHersh2004}
Phil Hanlon and Patricia Hersh, \emph{A {H}odge decomposition for the complex
  of injective words}, Pacific J. Math. \textbf{214} (2004), no.~1, 109--125.
  \MR{2039128 (2005c:13019)}

\bibitem[HR17]{HershReiner2015}
Patricia Hersh and Victor Reiner, \emph{Representation stability for cohomology
  of configuration spaces in {$\Bbb R^d$}}, Int. Math. Res. Not. IMRN (2017),
  no.~5, 1433--1486. \MR{3658170}

\bibitem[Jam78]{James1978}
G.~D. James, \emph{The representation theory of the symmetric groups}, Lecture
  Notes in Mathematics, vol. 682, Springer, Berlin, 1978. \MR{513828
  (80g:20019)}

\bibitem[Knu70]{Knuth1970}
Donald~E. Knuth, \emph{Permutations, matrices, and generalized {Y}oung
  tableaux}, Pacific J. Math. \textbf{34} (1970), 709--727. \MR{0272654 (42
  \#7535)}

\bibitem[MNP14]{MorrisNingPeres2014}
Ben Morris, Weiyang Ning, and Yuval Peres, \emph{Mixing time of the
  card-cyclic-to-random shuffle}, Ann. Appl. Probab. \textbf{24} (2014), no.~5,
  1835--1849. \MR{3226165}

\bibitem[MSS15a]{MargolisSaliolaSteinbergAMS}
S.~{Margolis}, F.~{Saliola}, and B.~{Steinberg}, \emph{{Cell complexes, poset
  topology and the representation theory of algebras arising in algebraic
  combinatorics and discrete geometry}}, ArXiv e-prints (2015).

\bibitem[MSS15b]{MargolisSaliolaSteinbergJEMS}
Stuart Margolis, Franco Saliola, and Benjamin Steinberg, \emph{Combinatorial
  topology and the global dimension of algebras arising in combinatorics}, J.
  Eur. Math. Soc. (JEMS) \textbf{17} (2015), no.~12, 3037--3080. \MR{3429159}

\bibitem[Pin15]{Pinsky2015}
Ross~G. Pinsky, \emph{Probabilistic and combinatorial aspects of the
  card-cyclic to random insertion shuffle}, Random Structures Algorithms
  \textbf{46} (2015), no.~2, 362--390. \MR{3302902}

\bibitem[QM17]{MorrisQin2014}
Chuan Qin and Ben Morris, \emph{Improved bounds for the mixing time of the
  random-to-random shuffle}, Electron. Commun. Probab. \textbf{22} (2017),
  Paper No. 22, 7. \MR{3635695}

\bibitem[RSW14]{RSW2014}
Victor Reiner, Franco Saliola, and Volkmar Welker, \emph{Spectra of symmetrized
  shuffling operators}, Mem. Amer. Math. Soc. \textbf{228} (2014), no.~1072,
  vi+109. \MR{3184410}

\bibitem[RW02]{ReinerWachs2002}
Victor Reiner and Michelle~L. Wachs, \emph{Eigenspaces of the random-to-top
  shuffling operator}, Unpublished notes, 2002.

\bibitem[RW04]{ReinerWebb2004}
Victor Reiner and Peter Webb, \emph{The combinatorics of the bar resolution in
  group cohomology}, J. Pure Appl. Algebra \textbf{190} (2004), no.~1-3,
  291--327. \MR{2043333 (2005i:57001)}

\bibitem[Sag01]{Sagan2001}
Bruce~E. Sagan, \emph{The symmetric group}, second ed., Graduate Texts in
  Mathematics, vol. 203, Springer-Verlag, New York, 2001, Representations,
  combinatorial algorithms, and symmetric functions. \MR{1824028 (2001m:05261)}

\bibitem[SC04]{SaloffCoste2004}
Laurent Saloff-Coste, \emph{Random walks on finite groups}, Probability on
  discrete structures, Encyclopaedia Math. Sci., vol. 110, Springer, Berlin,
  2004, pp.~263--346. \MR{2023654 (2004k:60133)}

\bibitem[SCc08]{Sage-Combinat}
The {S}age-{C}ombinat community, \emph{{S}age-{C}ombinat: enhancing {S}age as a
  toolbox for computer exploration in algebraic combinatorics}, 2008, {\tt
  http://combinat.sagemath.org}.

\bibitem[Sch77]{Schutzenberger1977}
M.-P. Sch{\"u}tzenberger, \emph{La correspondance de {R}obinson}, Combinatoire
  et repr\'esentation du groupe sym\'etrique ({A}ctes {T}able {R}onde {CNRS},
  {U}niv. {L}ouis-{P}asteur {S}trasbourg, {S}trasbourg, 1976), Springer,
  Berlin, 1977, pp.~59--113. Lecture Notes in Math., Vol. 579. \MR{0498826}

\bibitem[SCZ08]{SaloffCosteZuniga2008}
L.~Saloff-Coste and J.~Z{\'u}{\~n}iga, \emph{Refined estimates for some basic
  random walks on the symmetric and alternating groups}, ALEA Lat. Am. J.
  Probab. Math. Stat. \textbf{4} (2008), 359--392. \MR{2461789 (2010a:60019)}

\bibitem[Sta99]{StanleyEC2}
Richard~P. Stanley, \emph{Enumerative combinatorics. {V}ol. 2}, Cambridge
  Studies in Advanced Mathematics, vol.~62, Cambridge University Press,
  Cambridge, 1999, With a foreword by Gian-Carlo Rota and appendix 1 by Sergey
  Fomin. \MR{1676282 (2000k:05026)}

\bibitem[Sub13]{Subag2013}
Eliran Subag, \emph{A lower bound for the mixing time of the random-to-random
  insertions shuffle}, Electron. J. Probab. \textbf{18} (2013), no. 20, 20.
  \MR{3035748}

\bibitem[UR02]{Reyes2002}
Jay-Calvin Uyemura-Reyes, \emph{Random walk, semi-direct products, and card
  shuffling}, Ph.D. thesis, Stanford University, 2002.

\end{thebibliography}

\end{document}